\documentclass{amsart}

\usepackage{amsfonts,amssymb,amsmath,enumerate,verbatim,mathtools,tikz,bm,mathrsfs,tikz-cd,hyperref,comment,stmaryrd,amsthm}

\usepackage{colonequals}
\usepackage{adjustbox}
\usepackage{leftindex}
\usepackage[all,pdf]{xy}
\hypersetup{
    colorlinks=true,
    linkcolor=blue,
    filecolor=magenta,      
    urlcolor=cyan,
}

\usepackage{thmtools}
\usepackage{cleveref}

\makeatletter
\@namedef{subjclassname@2020}{\textup{2020} Mathematics Subject Classification}
\makeatother

\author[Souvik Dey]{Souvik Dey}
\address{Department of Mathematical Sciences, 850 West Dickson Street, University of Arkansas, Fayetteville, Arkansas 72701, USA, https://orcid.org/0000-0001-8265-3301}
\email{souvikd@uark.edu}

\author[Jian Liu]{Jian Liu}
\address{School of Mathematics and Statistics, and Hubei Key Laboratory of Mathematical Sciences,  Central China Normal University,  Wuhan 430079, P.R. China,\newline https://orcid.org/0000-0001-8360-7024}
\email{jianliu@ccnu.edu.cn}

\author[Xue-Song Lu]{Xue-Song Lu}
\address{School of Mathematical Sciences, Shanghai Jiao Tong University, Shanghai 200240, P.R. China, https://orcid.org/0009-0001-4439-4684}
\email{leocedar@sjtu.edu.cn}

\keywords{Cohen--Macaulay ring, weakly/virtually Gorenstein ring, canonical module, maximal Cohen–Macaulay module, Gorenstein projective module, cotorsion pair, Tor pair, thick subcategory}
\subjclass[2020]{13C14, 13D07, 16D90, 16E05, 16E30
}

\renewcommand{\S}{{\mathcal{S}}}

\DeclareMathOperator{\Ab}{Ab}

\DeclareMathOperator{\depth}{depth}

\DeclareMathOperator{\res}{res}
\DeclareMathOperator{\cx}{cx}
\DeclareMathOperator{\h}{H}

\newcommand{\syz}{{\Omega}}

\newcommand{\X}{\mathcal{X}}

\newcommand{\Z}{\mathbb{Z}}

\newcommand{\A}{\mathcal{A}}

\newcommand{\K}{\mathsf{K}}

\pgfdeclarelayer{bg}    
\pgfsetlayers{bg,main} 
\newcommand{\x}{{\bm{x}}}

\newcommand{\del}{\partial}

\newcommand{\m}{\mathfrak{m}}

\newcommand{\p}{\mathfrak{p}}

\DeclareMathOperator{\Ima}{Im}

\DeclareMathOperator{\Coker}{Coker}

\DeclareMathOperator{\add}{add}

\DeclareMathOperator{\id}{id}

\DeclareMathOperator{\Hom}{Hom}

\DeclareMathOperator{\Ext}{Ext}

\DeclareMathOperator{\End}{End}

\DeclareMathOperator{\Thick}{\mathsf{Thick}}
\DeclareMathOperator{\Tor}{Tor}

\DeclareMathOperator{\MCM}{MCM}

\DeclareMathOperator{\RHom}{\mathsf{RHom}}

\newcommand{\Mod}{\mathsf{Mod}}
\newcommand{\Inj}{\mathsf{Inj}}
\newcommand{\mo}{\mathsf{mod}}

\newcommand{\proj}{\mathsf{proj}}
\newcommand{\inj}{\mathsf{inj}}

\newcommand{\Proj}{\mathsf{Proj}}

\newcommand{\GProj}{\mathsf{GProj}}
\newcommand{\GInj}{\mathsf{GInj}}
\newcommand{\Gproj}{\mathsf{Gproj}}

\newtheorem{theorem}{Theorem}[section]

\newtheorem{proposition}[theorem]{Proposition}

\newtheorem{lemma}[theorem]{Lemma}
\newtheorem{corollary}[theorem]{Corollary}

\theoremstyle{definition}
\newtheorem{example}[theorem]{Example}
\newtheorem{remark}[theorem]{Remark}

\newtheorem{chunk}[theorem]{}

\usepackage[utf8]{inputenc}

\newtheorem*{ack}{Acknowledgements}

\title[Cotorsion pairs, thick subcategories, and Gorenstein projectives]{
 Cotorsion pairs, thick subcategories, and finitely generated Gorenstein projective modules}
\date{\today}

\begin{document}

\begin{abstract}
Let $R$ be a noetherian algebra over a Cohen--Macaulay ring $S$ admitting a canonical module $\omega$, and assume that $R$ is maximal Cohen--Macaulay over $S$. We prove that the category of finitely generated Gorenstein projective $R$-modules coincides with the left $\Ext$-orthogonal class of the thick subcategory generated by $R$ and $\Hom_S(R,\omega)$. As an application, finitely generated Gorenstein projective $R$-modules form the left half of a hereditary cotorsion pair. In the case of Cohen--Macaulay local rings, this yields an affirmative answer to a question of R. Takahashi. We further characterize when $R$ is left weakly Gorenstein.  Finally, we prove that a Cohen--Macaulay local ring is Gorenstein if and only if the right $\Ext$-orthogonal class of finitely generated Gorenstein projective modules coincides with the category of finitely generated modules of finite projective dimension.
\end{abstract}

\maketitle

\section{Introduction}
The study of Gorenstein homological properties has played a central role in modern representation theory and commutative algebra.  In 1967, M. Auslander \cite{A} introduced the notion of finitely generated Gorenstein projective modules in his four lectures at the S\'eminaire Pierre Samuel, under the name ``modules of {\rm G}-dimension zero''. 
In \cite{Enochs-Jenda-MZ}, E. E. Enochs and O. M. G. Jenda studied Gorenstein projective and injective modules for arbitrary rings. For an Iwanaga--Gorenstein ring, R.-O. Buchweitz \cite{Buchweitz} established a triangle equivalence between the stable category of finitely generated Gorenstein projective modules and the singularity category, demonstrating deep connections between the Gorenstein projective modules and the singularity theory. 

\vskip5pt

Cotorsion pairs offer a powerful framework for understanding approximation theory and resolving subcategories in module categories. A recent result, observed by M. Cortés-Izurdiaga and J. Šaroch \cite{CIS}, states that $(\GProj(R), \GProj(R)^{\perp_1})$ forms a cotorsion pair in the category of $R$-modules, where $\GProj(R)$ denotes the category of all Gorenstein projective modules and $\GProj(R)^{\perp_1}$ is the right $\Ext^1$-orthogonal class of $\GProj(R)$. The case of Artin algebras was previously established by A. Beligiannis and I. Reiten \cite{Beligiannis-Reiten}. However, the situation in the finitely generated setting remains unclear in general. Our first result, \Cref{T2}, shows that the category $\Gproj(R)$ of finitely generated Gorenstein projective modules occurs as the left half of a cotorsion pair for certain noetherian algebras over Cohen--Macaulay rings. 

\vskip5pt

\begin{theorem}\label{T2} (See \ref{cotorsion pair} and \ref{takanswer1})
 Let $S$ be a Cohen--Macaulay ring. Assume that $R$ is a noetherian $S$-algebra and that $R$, viewed as an $S$-module, is maximal Cohen--Macaulay. If moreover $S$ is local or admits a canonical module, then there is a hereditary cotorsion pair $(\Gproj(R),\Gproj(R)^{\perp_1})$ in $\mo(R)$.
\end{theorem}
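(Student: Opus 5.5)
The plan is to realize $\Gproj(R)$ as the left $\Ext$-orthogonal class of a thick subcategory, as the abstract announces, and then to deduce the cotorsion pair from completeness properties of such orthogonal classes. Set $\omega_R=\Hom_S(R,\omega)$ and let $\mathcal{T}=\Thick(R\oplus\omega_R)$ be the thick subcategory of $\mo(R)$ it generates. First reduce to the case where $S$ admits a canonical module. If $S$ is local, pass to the completion $\widehat S$ and $\widehat R=R\otimes_S\widehat S$; faithful flatness preserves and reflects finite generation, Gorenstein projectivity (via $\Ext$ and $\Hom$ commuting with completion) and $\Ext$-vanishing. Alternatively, one can check directly that the cotorsion-pair property descends along $R\to\widehat R$.

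\textbf{Step 1: $\Gproj(R)={}^{\perp}\mathcal{T}$.} For $\subseteq$: take $G\in\Gproj(R)$. Then $\Ext^{i>0}_R(G,R)=0$. Next, $\Ext^i_R(G,\omega_R)\cong\Ext^i_S(G,\omega)$ by adjunction, since $R$ is MCM over $S$ and $\Ext_S^{>0}(R,\omega)=0$. Moreover $G$ is a syzygy of arbitrarily high order of finitely generated $R$-modules, so $G$ is MCM over $S$ by depth counting (localize at primes of $S$). Hence $\Ext_S^{>0}(G,\omega)=0$. Since ${}^\perp\mathcal X$ for a class $\mathcal X$ is closed in the second variable under extensions, cones and summands, we get $G\in{}^\perp\mathcal{T}$.

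For $\supseteq$: let $M\in{}^\perp\mathcal{T}$. Then $\Ext^{>0}(M,R)=0$, and $\Ext_S^{>0}(M,\omega)=0$ makes $M$ MCM over $S$. We need a coresolution $0\to M\to P^0\to P^1\to\cdots$ by projectives that stays exact under $\Hom_R(-,R)$. Equivalently, $M$ must be reflexive and $M^*$ must satisfy $\Ext^{>0}(M^*,R)=0$. Here use $\omega$-duality $D=\Hom_S(-,\omega)$, an exact duality on $S$-MCM modules that exchanges left and right $R$-modules. We have $D(R)=\omega_R$, and $\Ext_R^i(M,\omega_R)\cong\Ext_S^i(M,\omega)$. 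The hypothesis $\Ext(M,\mathcal{T})=0$, with $R$ lying in $\Thick(\omega_R)$-type relations, should give $\Ext(M,R)=0$ for all higher syzygies and cosyzygies. I would construct the coresolution by taking a left $\add R$-approximation of $M$ (this is where $M\to R^n$ is shown injective with cokernel again in ${}^\perp\mathcal{T}$), using that $R\in\mathcal T$ and thickness. Concretely: take a projective resolution of $D(M)$ over $R^{op}$ and dualize by $D$ to get $0\to M\to D(F_0)\to D(F_1)\to\cdots$, where $D(F_i)\in\add\omega_R$. Then replace the $\omega_R$-terms by $R$ terms, using that $\omega_R\in\mathcal T$ and $\Ext(M,\mathcal T)=0$, so the relevant extensions split or lift. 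I expect this step, producing an honest projective coresolution with cokernels still in ${}^\perp\mathcal T$, to be the main obstacle. The cleanest route is probably to show ${}^\perp\mathcal T$ is closed under cokernels of monic left $\proj$-approximations, by a dimension-shifting argument inside $\mathcal T$.

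\textbf{Step 2: cotorsion pair.} Given Step 1, $\Gproj(R)$ is resolving and closed under summands. Set $\mathcal{B}=\Gproj(R)^{\perp_1}=\Gproj(R)^{\perp}$; the second equality holds because $\Gproj$ is closed under syzygies. We then need completeness: every $M$ has a right $\Gproj$-approximation whose kernel lies in $\mathcal B$, and ${}^{\perp_1}\mathcal B=\Gproj$.
\begin{itemize}
\item The equality ${}^{\perp_1}\mathcal B=\Gproj$ follows from $\mathcal T\subseteq\mathcal B$ and Step 1: if $\Ext^1(X,\mathcal B)=0$, then $\Ext^{\ge1}(X,\mathcal T)=0$, using that $\mathcal T$ is closed under cosyzygies in a suitable sense, or by applying the condition to syzygies.
\item For approximations, I would use that $\mathcal T=\Thick(R\oplus\omega_R)$ has objects with finite resolutions by $\add(R\oplus\omega_R)$ up to summands. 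Via an Auslander–Buchweitz-type argument, every finitely generated $M$ whose $S$-MCM approximation exists admits $0\to Y\to G\to M\to0$ with $G\in\Gproj$ and $Y\in\mathcal B$.
\end{itemize}
To get this, first take an $S$-MCM approximation, using the canonical module (Auslander–Buchweitz for $R$ MCM over $S$). Then use $\omega$-duality to approximate MCM modules by Gorenstein projectives with kernels of finite $\add\omega_R$-resolution, which lie in $\mathcal B$. Finally, hereditary follows because $\Gproj$ is resolving.
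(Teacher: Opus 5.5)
Your Step 2 aims at the wrong target. A cotorsion pair here only requires $\mathcal B=\Gproj(R)^{\perp_1}$ and $\Gproj(R)={}^{\perp_1}\mathcal B$. With $\mathcal B:=\Gproj(R)^{\perp_1}$, the entire content is ${}^{\perp_1}\mathcal B\subseteq\Gproj(R)$, and heredity is automatic since $\Gproj(R)$ is resolving. Completeness is not needed, and your second bullet is false in general. Sequences $0\to Y\to G\to M\to 0$ with $G\in\Gproj(R)$, $Y\in\mathcal B$ for every $M$ would make $\Gproj(R)$ contravariantly finite in $\mo(R)$. That fails for certain Artin algebras \cite{Yoshino2003,BK}, and, by \cite{CPST}, for every henselian Cohen--Macaulay local ring that is not Gorenstein but has $\Gproj(R)\neq\proj(R)$.

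The real work is your first bullet, and it has a gap. From $\mathcal T\subseteq\mathcal B$ you only get $\Ext^1_R(X,\mathcal T)=0$, so you need ${}^{\perp_1}\mathcal T\subseteq\Gproj(R)$. Your Step 1 instead treats ${}^{\perp_\infty}\mathcal T$ and uses all of $\Ext^{>0}_S(X,\omega)=0$ to make $X$ maximal Cohen--Macaulay. The proposed upgrade from $\Ext^1$ to $\Ext^{\ge 1}$ is not available: $\mo(R)$ need not have enough injectives. Shifting along $0\to Y\to\omega_R^{n}\to Y'\to 0$ (which exists only for MCM $Y$) already requires $\Ext^{\ge 2}_R(X,\omega_R)=0$, i.e.\ that $X$ is MCM.

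The missing idea, used in the paper, is an MCM approximation $0\to K\to M'\to X\to 0$ with $M'\in\MCM(S)$ and $K\in\mathcal T$. This is the Auslander--Buchweitz tool you invoke, but in the wrong place. The sequence splits since $\Ext^1_R(X,K)=0$, so $X$ is MCM. After that, your acknowledged obstacle dissolves:
\begin{itemize}
\item Every $Y\in\mathcal T$ is a quotient of a projective with kernel in $\mathcal T$, so a left $\proj(R)$-approximation $X\to Q$ is a left $\mathcal T$-approximation.
\item Hence it is monic (the embedding $X\hookrightarrow\omega_R^{n}$ factors through it), and its cokernel again lies in ${}^{\perp_1}\mathcal T$.
\item $\Ext^{\ge 2}_R(X,R)=0$ follows by shifting along an $\add\omega_R$-coresolution of $R$, whose images lie in $\mathcal T$.
\end{itemize}
Then ${}^{\perp_1}\bigl(({}^{\perp_1}\mathcal T)^{\perp_1}\bigr)={}^{\perp_1}\mathcal T=\Gproj(R)$ gives the cotorsion pair. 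A minor point: for $\mathcal T\subseteq\Gproj(R)^{\perp_\infty}$ you also need closure under kernels of epimorphisms. This is not true for right orthogonals of arbitrary classes; for $\Gproj(R)$ it holds because every Gorenstein projective is a syzygy of another one.

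Your reduction of the local case via completion also has a gap. To descend, you would need $\Ext^1_{\widehat R}(\widehat M,N)=0$ for all $N\in\Gproj(\widehat R)^{\perp_1}$. But $\Ext^1_{\widehat R}(\widehat M,N)\cong\Ext^1_R(M,N)$ with $N$ typically not finitely generated over $R$ (e.g.\ $N=\omega_{\widehat S}$ when $R=S$ has no canonical module). The hypothesis on $M$ only tests finitely generated $R$-modules, so it does not apply; faithful flatness only compares $\Ext$ between extended modules.

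The paper stays over $R$ instead. Let $\mathbf x$ be a maximal $S$-regular sequence.
\begin{itemize}
\item The finite-length module $N=\Hom_S(R/\mathbf xR,E_S(k))$ has finite injective dimension over $R$, so $\Ext^1_R(M,N)=0$. This gives $\Tor^R_1(R/\mathbf xR,M)=0$ and hence $M\in\MCM(S)$.
\item Then $\Gproj(R/\mathbf xR)^{\perp_1}\subseteq\Gproj(R)^{\perp_1}$ and $\Ext^1_R(M,L)\cong\Ext^1_{R/\mathbf xR}(M/\mathbf xM,L)$ place $M/\mathbf xM$ in ${}^{\perp_1}(\Gproj(R/\mathbf xR)^{\perp_1})$.
\item The canonical-module case for the Artin algebra $R/\mathbf xR$ makes $M/\mathbf xM$ Gorenstein projective. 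This lifts to $M$ because $\mathbf x$ is regular on both $R$ and $M$.
\end{itemize}
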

\Cref{T2} gives an affirmative answer to a question of R. Takahashi \cite{Takahashi} for Cohen--Macaulay local rings; see \Cref{Support}. Its proof relies in part on a characterization of $\Gproj(R)$ established in \Cref{characterization}, namely, as the left $\Ext^1$-orthogonal class of the thick subcategory $\Thick(\proj(R)\cup \{\Hom_S(R,\omega)\})$ in the category $\mo(R)$ of finitely generated modules.  When $R$ is an Artin algebra over a commutative Artin ring $S$, this thick subcategory coincides with $ \Thick(\proj(R)\cup \inj(R))$. This thick category was first systematically studied by A. Beligiannis and H. Krause \cite{BK}. They proved that an Artin algebra $R$ is virtually Gorenstein if and only if $\Thick(\proj(R)\cup \inj(R))$ is contravariantly finite in $\mo(R)$, if and only if $\Thick(\proj(R)\cup \inj(R))$ is covariantly finite in $\mo(R)$. 

\vskip5pt

Since $\Gproj(R)$ is a resolving subcategory, it is natural to ask whether every resolving subcategory induces a cotorsion pair. In general, however, this is not the case. Indeed, we construct examples of resolving subcategories $\mathcal X$ of $\mo(R)$ for which the pair $(\mathcal X,\mathcal X^{\perp_1})$ is not a cotorsion pair; see \Cref{counterexample}.

\vskip5pt

The notion of \emph{left weakly Gorenstein rings} was introduced by C. M. Ringel and P. Zhang \cite{RZ}. It was also independently introduced under different names by L. W. Christensen and H. Holm \cite{CH}, as well as by R. Marczinzik \cite{Rene2021}. By definition, a left weakly Gorenstein ring is a noetherian ring for which the left Ext-orthogonal class of the ring itself coincides with the class of finitely generated Gorenstein projective left modules. In \cite{RZ}, several equivalent characterizations of left weakly Gorensteinness for Artin algebras were established. 
Recently, weakly Gorensteinness has garnered increasing interest and has been studied in \cite{GZZ,Huang,Rene2019}.

\vskip5pt

By combining \Cref{T2} with the characterization of monic left $\proj(R)$-approximations established in \Cref{cm case-approximation}, we obtain the second main result \Cref{T1} of this article. This theorem characterizes left weakly Gorenstein rings in terms of the thick subcategory
$
\Thick(\proj(R)\cup {\Hom_S(R,\omega)}).
$
It further shows that the left weakly Gorenstein property is determined by a single module.
It is worth noting that every Artin algebra and every complete Cohen--Macaulay local ring satisfies the hypotheses of \Cref{T1}; see \Cref{Examples} for further examples.

\begin{theorem}\label{T1}(See \ref{char-weakly Gor})
   Let $S$ be a Cohen--Macaulay ring that admits a canonical module $\omega$. Assume that $R$ is a noetherian $S$-algebra and that $R$, viewed as an $S$-module, is maximal Cohen--Macaulay.  Then the following are equivalent.
\begin{enumerate}
    \item $R$ is left weakly Gorenstein. 

    \item $\Thick(\proj(R)\cup \{\Hom_S(R,\omega)\})\subseteq (\leftindex^{\perp_\infty} {R})^{\perp_\infty}$. 

    \item $X\in (\leftindex^{\perp_\infty} R)^{\perp_\infty}$, where $X$ is a first syzygy of the $R$-module $\Hom_S(R,\omega)$.

\end{enumerate}
\end{theorem}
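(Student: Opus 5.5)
We prove (1)$\Rightarrow$(2)$\Rightarrow$(3)$\Rightarrow$(1). Throughout, write $\mathcal T=\Thick(\proj(R)\cup\{\Hom_S(R,\omega)\})$ and $\mathcal G=\Gproj(R)$. The main tool is the characterization in \Cref{characterization}: $\mathcal G={}^{\perp_1}\mathcal T$ inside $\mo(R)$. Since $\mathcal T$ is thick, and in particular closed under syzygies and cosyzygies, this also gives $\mathcal G={}^{\perp_\infty}\mathcal T$. We will also use \Cref{T2}, that $(\mathcal G,\mathcal G^{\perp_1})$ is a hereditary cotorsion pair, so that $\mathcal G^{\perp_1}=\mathcal G^{\perp_\infty}$.

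For (1)$\Rightarrow$(2), assume ${}^{\perp_\infty}R=\mathcal G$. Every $T\in\mathcal T$ satisfies $\Ext^{i}(G,T)=0$ for all $G\in\mathcal G$ and $i\ge1$, by the characterization above. Hence $\mathcal T\subseteq\mathcal G^{\perp_\infty}=({}^{\perp_\infty}R)^{\perp_\infty}$. The implication (2)$\Rightarrow$(3) is immediate, since $\mathcal T$ is closed under syzygies and contains $\Hom_S(R,\omega)$, so any first syzygy $X$ of $\Hom_S(R,\omega)$ lies in $\mathcal T$.

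For (3)$\Rightarrow$(1), the inclusion $\mathcal G\subseteq{}^{\perp_\infty}R$ always holds. Let $M\in{}^{\perp_\infty}R$; we must show $M\in\mathcal G$, that is, $\Ext^{\ge1}(M,\mathcal T)=0$. The class $\mathcal C=({}^{\perp_\infty}R)^{\perp_\infty}$ is closed under extensions, direct summands and cokernels of monomorphisms. It is also closed under kernels of epimorphisms, because ${}^{\perp_\infty}R$ is closed under syzygies, and we can dimension-shift to get $\Ext^i(M,K)=0$ for $i\ge1$ from $\Ext^{i}(\Omega M,\cdot)=\Ext^{i+1}(M,\cdot)$. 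So $\mathcal C$ is thick. It contains $\proj(R)$, and by (3) it contains $X$. Consider the short exact sequence $0\to X\to P\to\Hom_S(R,\omega)\to0$. We need $\Hom_S(R,\omega)\in\mathcal C$, which does not follow from thickness alone, since we only know $\Ext^{\ge1}(M,X)=0$ and $\Ext^{\ge1}(M,P)=0$. The long exact sequence gives $\Ext^{i}(M,\Hom_S(R,\omega))=0$ for $i\ge1$ as soon as $\Ext^{i+1}(M,X)=0$, and that holds. So $\Hom_S(R,\omega)\in\mathcal C$, and therefore $\mathcal T\subseteq\mathcal C$. In particular $\Ext^{\ge1}(M,T)=0$ for all $T\in\mathcal T$, so $M\in{}^{\perp_1}\mathcal T=\mathcal G$ by \Cref{characterization}.

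The main obstacle is verifying that $\mathcal C$ is genuinely thick in $\mo(R)$, and in particular the two-out-of-three property in every position. Closure under kernels of epimorphisms and under cokernels of monomorphisms both follow from the long exact sequence together with the fact that the vanishing is required in all degrees $\ge1$. The only subtle case is the cokernel $C$ of a monomorphism $A\to B$ with $A,B\in\mathcal C$. There we need the connecting term $\Ext^{i+1}(M,A)$ with $i\ge1$, which lies in degree $\ge2$ and so vanishes. So $\mathcal C$ is thick, and this is exactly the step the argument above uses for $\Hom_S(R,\omega)$. If \Cref{characterization} is stated only with $\perp_1$, the passage to $\perp_\infty$ uses closure of $\mathcal T$ under cosyzygies. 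This closure needs monic $\proj$-approximations or cosyzygies inside $\mathcal T$, as supplied by \Cref{cm case-approximation}. Alternatively, one can avoid this by observing directly that $M$ Gorenstein projective gives vanishing in all degrees, since its syzygies are again Gorenstein projective.
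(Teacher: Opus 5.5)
Your implications (1)$\Rightarrow$(2) and (2)$\Rightarrow$(3) are correct and match the paper. For (1)$\Rightarrow$(2), the clean justification is $\mathcal T\subseteq\mathcal G^{\perp_1}=\mathcal G^{\perp_\infty}$, which uses that $\mathcal G$ is resolving. It does not need any closure of $\mathcal T$ under ``cosyzygies'', which are not available in $\mo(R)$.

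The gap is in (3)$\Rightarrow$(1). The class $\mathcal C=({}^{\perp_\infty}R)^{\perp_\infty}$ is \emph{not} closed under kernels of epimorphisms in general, and your dimension shift runs in the wrong direction. Take $0\to K\to B\to C\to 0$ with $B,C\in\mathcal C$ and $M\in{}^{\perp_\infty}R$. The long exact sequence gives $\Ext^{\ge 2}_R(M,K)=0$, but in degree one it only gives $\Hom_R(M,B)\to\Hom_R(M,C)\to\Ext^1_R(M,K)\to 0$. The isomorphism $\Ext^i_R(\Omega M,-)\cong\Ext^{i+1}_R(M,-)$ only re-expresses the higher groups you already control. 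To reach $\Ext^1_R(M,K)$ you would need $M$ itself to be a syzygy of a module in ${}^{\perp_\infty}R$. Your last paragraph also has the cases reversed: cokernels of monomorphisms are the easy case, and kernels of epimorphisms are the delicate one.

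This is not a repairable technicality. For an Artin algebra $\Lambda$, apply the kernel step to a projective cover $B\to C=D(\Lambda_\Lambda)^n$. Then $\Omega C\in\mathcal C$ would force every embedding $M\hookrightarrow D(\Lambda_\Lambda)^n$ with $M\in{}^{\perp_\infty}\Lambda$ to factor through a projective. That is, every such $M$ would be torsionless, which by \Cref{recover} means $\Lambda$ is left weakly Gorenstein. So unconditional thickness of $\mathcal C$ is essentially equivalent to what you are proving, and it is false for non-left-weakly-Gorenstein algebras. Consequently $\mathcal T\subseteq\mathcal C$ is not established.

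The missing idea is to use (3) to produce \emph{monic} left $\proj(R)$-approximations, which is the content of \Cref{cm case-approximation}. You already showed $\Ext^{\ge1}_R(M,\Hom_S(R,\omega))=0$ for $M\in{}^{\perp_\infty}R$. By \Cref{change of ring-iso} and \Cref{global version}, this makes $M$ maximal Cohen--Macaulay over $S$, so \Cref{monomorphism} gives an embedding $h\colon M\hookrightarrow \Hom_S(R,\omega)^n$. Since $\Ext^1_R(M,X^n)=0$, $h$ lifts through $P^n\to\Hom_S(R,\omega)^n$. Thus $h$ factors through a projective, so every left $\proj(R)$-approximation of $M$ is monic and $M$ is torsionless; \Cref{recover} then gives (1).

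Equivalently, the cokernel of such a monic approximation lies again in ${}^{\perp_\infty}R$, so every $M\in{}^{\perp_\infty}R$ is a syzygy of a module in ${}^{\perp_\infty}R$. Only after this step do your dimension shift, the thickness of $\mathcal C$, and the inclusion $\mathcal T\subseteq\mathcal C$ become valid.
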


The category $(\leftindex^{\perp_\infty} {R})^{\perp_\infty}$ above is the double $\Ext$-orthogonal class of $R$ in the category of finitely generated $R$-modules; see \Cref{def-cotorsion}.

\vskip5pt

The notion of \emph{virtually Gorenstein algebra} was introduced by A. Beligiannis and I. Reiten \cite{Beligiannis-Reiten}. This notion was extended to commutative noetherian rings of finite Krull dimension by F. Zareh-Khoshchehreh, M. Asgharzadeh, and K. Divaani-Aazar \cite{ZAD}. More recently, Z. Di, L. Liang, and J. Wang \cite{DLW} generalized the concept further to arbitrary rings. 
Let $\Lambda$ be an Artin algebra such that $\Gproj(\Lambda) = \proj(\Lambda)$. In \cite[Problem C]{Chen-postdoc}, X.-W. Chen asked whether $\GProj(\Lambda) = \Proj(\Lambda)$. \Cref{T4} shows that this problem is equivalent to asking whether $\Lambda$ is virtually Gorenstein.

\begin{theorem}\label{T4} (See \ref{char-VG})
    Let $\Lambda$ be an Artin algebra and assume $\Gproj(\Lambda)=\proj(\Lambda)$. The following are equivalent. 
    \begin{enumerate}
        \item $\Lambda$ is virtually Gorenstein.

        \item $\GProj(\Lambda)=\Proj(\Lambda)$.

        \item $\Thick(\proj(\Lambda)\cup \inj(\Lambda))=\mo(\Lambda)$. 

        \item $
    (\leftindex^{\perp_1}\Thick(\proj(\Lambda)\cup \inj(\Lambda)), \Thick(\proj(\Lambda)\cup \inj(\Lambda)))
    $
    is a cotorsion pair.
    \end{enumerate} 
\end{theorem}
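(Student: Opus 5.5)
We prove the cycle $(1)\Rightarrow(2)\Rightarrow(4)\Rightarrow(3)\Rightarrow(1)$. Throughout, $\mathcal T=\Thick(\proj(\Lambda)\cup\inj(\Lambda))$, which is the thick subcategory $\Thick(\proj(\Lambda)\cup\{\Hom_S(\Lambda,\omega)\})$ from \Cref{characterization} for $S$ a commutative Artin ring. The two ingredients are the characterization $\Gproj(\Lambda)={}^{\perp_1}\mathcal T$ (\Cref{characterization}) and the Beligiannis--Krause theorem: $\Lambda$ is virtually Gorenstein if and only if $\mathcal T$ is contravariantly (equivalently, covariantly) finite in $\mo(\Lambda)$.

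\emph{(3)$\Rightarrow$(1).} If $\mathcal T=\mo(\Lambda)$, then $\mathcal T$ is trivially functorially finite in $\mo(\Lambda)$, so Beligiannis--Krause gives that $\Lambda$ is virtually Gorenstein.

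\emph{(1)$\Rightarrow$(2).} For a virtually Gorenstein Artin algebra, Beligiannis--Reiten show that every Gorenstein projective module is a filtered colimit of finitely generated Gorenstein projectives; equivalently, $\GProj(\Lambda)=\varinjlim\Gproj(\Lambda)$. Under our hypothesis $\Gproj(\Lambda)=\proj(\Lambda)$, we get $\GProj(\Lambda)\subseteq\varinjlim\proj(\Lambda)$, which is the class of flat modules. A Gorenstein projective flat module $G$ is projective. Indeed, its complete resolution gives $G\hookrightarrow P$ with $P$ projective and Gorenstein projective cokernel $G'$, which is again a colimit of finitely generated projectives and hence flat. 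Over an Artin algebra, flat modules have finite projective dimension, since perfect rings have all flat modules projective. Then $\Ext^1(G',G)=0$, so the sequence splits. This gives $\GProj(\Lambda)=\Proj(\Lambda)$.

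\emph{(2)$\Rightarrow$(4).} Assume $\GProj=\Proj$. We want ${}^{\perp_1}\mathcal T=\proj(\Lambda)$ and $(\proj(\Lambda))^{\perp_1}=\mo(\Lambda)$. The first equality is \Cref{characterization} together with $\Gproj=\proj$. The cotorsion pair $(\GProj(\Lambda),\GProj(\Lambda)^{\perp_1})$ of Beligiannis--Reiten / Cortés-Izurdiaga--Šaroch becomes $(\Proj,\Mod)$, which carries no information about $\mathcal T$. So this step needs a different route: we show that $\mathcal T$ is functorially finite and hence that $\Lambda$ is virtually Gorenstein. The plan is to use the Beligiannis--Reiten equivalence: $\Lambda$ is virtually Gorenstein if and only if $\GProj^{\perp}={}^{\perp}\GInj$. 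Condition (2) gives $\GProj^{\perp}=\Mod$. Dually, $\Gproj=\proj$ should imply $\Ginj=\inj$ via the duality $D$ and $\Gproj(\Lambda^{op})$; this needs justification, since $\Gproj(\Lambda)=\proj(\Lambda)$ does not obviously transfer to $\Lambda^{op}$. I expect (2) to force $\GInj=\Inj$ through the Beligiannis--Reiten description, giving ${}^{\perp}\GInj=\Mod$ and hence virtual Gorensteinness. That establishes (1) from (2). For (4) we then need more, and I would instead prove (1)$\Rightarrow$(3) directly.

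\emph{(1)$\Rightarrow$(3), the main obstacle.} By Beligiannis--Krause, $\mathcal T$ is contravariantly finite. Take $M\in\mo(\Lambda)$ and a right $\mathcal T$-approximation $f\colon T\to M$. Since $\proj\subseteq\mathcal T$, $f$ is surjective. By the Wakamatsu lemma, after passing to a minimal approximation, $K=\ker f\in\mathcal T^{\perp_1}$. Because $\mathcal T$ is thick, $K\in\mathcal T^{\perp_\infty}$. The key is to show $\mathcal T^{\perp_\infty}\cap\mo(\Lambda)\subseteq\mathcal T$. For this, the dual statement of \Cref{characterization}, $\Ginj=\mathcal T^{\perp}$ (by symmetry, using the duality $D$ and the fact that $\mathcal T$ is self-dual), reduces it to $\Ginj(\Lambda)=\inj(\Lambda)\subseteq\mathcal T$. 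Here we use that virtual Gorensteinness is left-right symmetric, together with the finitely generated Gorenstein injectives. Then $K\in\mathcal T$, so $M\in\mathcal T$ by thickness, which gives (3).

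\emph{(3)$\Leftrightarrow$(4).} If $\mathcal T=\mo$, then ${}^{\perp_1}\mathcal T=\proj$ and $(\proj,\mo)$ is a cotorsion pair, so (4) holds. Conversely, (4) gives $\mathcal T=({}^{\perp_1}\mathcal T)^{\perp_1}=\proj^{\perp_1}=\mo$, using \Cref{characterization} and the hypothesis $\Gproj=\proj$.

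The delicate point throughout is the dual equality $\Ginj(\Lambda)=\inj(\Lambda)$ needed in (1)$\Rightarrow$(3). I would try to get it from virtual Gorensteinness via $D$-duality and $\GProj=\Proj$.
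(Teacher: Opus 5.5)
\textbf{The gap: nothing proves that (2) implies the other conditions.} Your cycle never closes. The route you sketch for $(2)\Rightarrow(1)$ rests on $\GInj(\Lambda)=\Inj(\Lambda)$, which you only ``expect'', and you then abandon it. What remains is $(3)\Rightarrow(1)\Rightarrow(2)$, $(1)\Rightarrow(3)$ and $(3)\Leftrightarrow(4)$, so (2) is only ever a consequence.

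The missing direction is the easy half of the Beligiannis characterization you already invoke. The paper states it as ``$\Lambda$ is virtually Gorenstein if and only if $\underline{\GProj}(\Lambda)^c=\underline{\Gproj}(\Lambda)$''. If $\GProj(\Lambda)=\Proj(\Lambda)$, then $\underline{\GProj}(\Lambda)=0$. Its compact objects are then $0=\underline{\Gproj}(\Lambda)$ by the hypothesis $\Gproj(\Lambda)=\proj(\Lambda)$, so $\Lambda$ is virtually Gorenstein. Equally, if you use your filtered-colimit characterization as an equivalence, you get $\GProj(\Lambda)=\Proj(\Lambda)=\varinjlim\proj(\Lambda)=\varinjlim\Gproj(\Lambda)$. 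The middle equality holds because flat modules over the perfect ring $\Lambda$ are projective, which also makes your splitting argument in $(1)\Rightarrow(2)$ unnecessary.

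\textbf{A second hole, in $(1)\Rightarrow(3)$.} Write $\mathcal T=\Thick(\proj(\Lambda)\cup\inj(\Lambda))$. With right $\mathcal T$-approximations, the kernel lands in $\mathcal T^{\perp_1}\cap\mo(\Lambda)$. By the dual of \Cref{characterization}, these are the finitely generated Gorenstein injectives. You then need them to be injective, i.e.\ $\Gproj(\Lambda^{\rm op})=\proj(\Lambda^{\rm op})$, which the hypothesis does not provide.

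Work on the other side instead. Beligiannis--Krause also gives covariant finiteness of $\mathcal T$. A minimal left $\mathcal T$-approximation $g\colon M\to T$ is a monomorphism, because $\inj(\Lambda)\subseteq\mathcal T$. The dual Wakamatsu lemma then puts $\Coker g$ in ${}^{\perp_1}\mathcal T$, which equals $\Gproj(\Lambda)=\proj(\Lambda)\subseteq\mathcal T$ by \Cref{characterization}. Thickness gives $M\in\mathcal T$.

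\textbf{Comparison with the paper.} With both repairs your argument is a valid alternative route:
\begin{itemize}
\item $(1)\Leftrightarrow(3)$ through Beligiannis--Krause and Wakamatsu;
\item $(1)\Leftrightarrow(2)$ through Beligiannis;
\item $(3)\Leftrightarrow(4)$ exactly as in the paper.
\end{itemize}
The paper never uses Beligiannis--Krause. It proves $(2)\Leftrightarrow(3)$ directly in \Cref{G-free}, using compact generation of $\K_{\rm tac}(\Proj(\Lambda))$ and the Iyengar--Krause description of its compact objects, translated into $\mo(\Lambda)$ via Krause--Stevenson.
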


There is a classical result that $(\Gproj(R), {\mathcal P}^{<\infty}(R))$ forms a cotorsion pair when $R$ is Iwanaga--Gorenstein; here, ${\mathcal P}^{<\infty}(R)$ denotes the full subcategory of $\mo(R)$ consisting of all finitely generated
 modules of finite projective dimension. A natural question is whether the converse also holds. The following result shows that this is indeed the case for Cohen--Macaulay local rings. This provides new insight into the homological structure of Iwanaga--Gorenstein rings via the cotorsion pair generated by finitely generated Gorenstein projective modules.

\begin{theorem}\label{T3} (See \ref{result3})
    Let $R$ be a Cohen--Macaulay local ring. The following are equivalent. 
    \begin{enumerate}
        \item $R$ is Gorenstein.

        \item $\Gproj(R)^{\perp_1}={\mathcal P}^{<\infty}(R)$.

        \item $(\Gproj(R), {\mathcal P}^{<\infty}(R))$ is a cotorsion pair.
    \end{enumerate}
\end{theorem}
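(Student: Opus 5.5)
The plan is to prove (1)$\Rightarrow$(3)$\Rightarrow$(2)$\Rightarrow$(1). Here $R$ is commutative Cohen--Macaulay local of dimension $d$, and $\Gproj(R)$ is the class of totally reflexive modules.

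\textbf{(1)$\Rightarrow$(3).} Assume $R$ is Gorenstein. Then $\Gproj(R)$ equals the class of maximal Cohen--Macaulay modules. We must show ${\mathcal P}^{<\infty}(R)\subseteq\Gproj(R)^{\perp_1}$ and conversely.

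For the first inclusion, take $P\in{\mathcal P}^{<\infty}(R)$ and $G\in\Gproj(R)$. Since $\Ext^i(G,R)=0$ for $i>0$, induction on the projective dimension of $P$ gives $\Ext^{i}(G,P)=0$ for all $i\ge1$.

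For the converse, take $M\in\Gproj(R)^{\perp_1}$. Choose an MCM approximation $0\to Y\to X\to M\to 0$ with $X$ MCM and $Y$ of finite projective (equivalently injective) dimension; this exists by Auslander--Buchweitz over a Gorenstein ring. Since $\Ext^1(X,M)=0$, the sequence splits. So $M$ is a summand of $X$, hence MCM, hence Gorenstein projective. Now take $0\to M'\to F\to M\to0$ with $F$ free and $M'\in\Gproj(R)$. Because $\Ext^1(M,M')=0$ would split this sequence, we want to prove that vanishing. By dimension shifting, $\Ext^1(M,M')\cong\Ext^1(\Omega^{j}M,\Omega^{j}M')$ up to stable-category identifications. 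A cleaner route is to take a complete resolution and write $M=\Omega N$ with $N$ Gorenstein projective via $0\to M\to F'\to N\to0$. Then $\Ext^1(N,M)=0$ splits this sequence, so $M$ is a summand of $F'$ and hence projective. Therefore $M\in{\mathcal P}^{<\infty}(R)$, and in fact $M$ is free. This also shows that the cotorsion pair property in the other direction, ${}^{\perp_1}{\mathcal P}^{<\infty}=\Gproj$, is the classical statement, so (3) holds.

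\textbf{(3)$\Rightarrow$(2).} This is immediate from the definition of a cotorsion pair.

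\textbf{(2)$\Rightarrow$(1).} This is the main step. Assume $\Gproj(R)^{\perp_1}={\mathcal P}^{<\infty}(R)$.

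We may complete: completion preserves Gorensteinness, and under the paper's results both classes behave well. A safer alternative is to use the cotorsion pair from \Cref{T2}, since $R$ is local.

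Over $\widehat R$ there is a canonical module $\omega$, and $\Ext^{i}(G,\omega)=0$ for every maximal Cohen--Macaulay module $G$, in particular for every $G\in\Gproj$. Hence $\omega\in\Gproj(R)^{\perp_1}={\mathcal P}^{<\infty}(R)$. A canonical module of finite projective dimension is free: it is MCM, so Auslander--Buchsbaum gives projective dimension $0$. So $\omega\cong R$ and $R$ is Gorenstein.

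In the non-complete case, we use the characterization in \Cref{characterization}. This gives $\Gproj(R)={}^{\perp_1}\Thick(\proj(R)\cup\{\omega\})$ when $\omega$ exists, and hence $\omega\in\Gproj(R)^{\perp_1}$. Without a canonical module, the plan is to pass to $\widehat R$ by faithfully flat descent of Ext vanishing. The expected obstacle is exactly this descent: showing the hypothesis (2) transfers to $\widehat R$, or else finding a module over $R$ playing the role of $\omega$.

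As a fallback, use the test module $M=\Omega^{-?}$, namely a $d$-th cosyzygy-type module built from $\Ext$-orthogonality. Alternatively, note that $(2)$ forces every module in $\Gproj^{\perp_1}$ to have finite projective dimension, and apply this to the Auslander--Buchweitz hull $Y$ of the residue field $k$ obtained from the cotorsion pair of \Cref{T2}. Then $Y\in{\mathcal P}^{<\infty}$, which yields a finite resolution of $k$ by Gorenstein projectives. So $k$ has finite G-dimension, and $R$ is Gorenstein by Auslander--Bridger. This fallback is the argument I would actually prefer, since it avoids $\omega$ entirely.
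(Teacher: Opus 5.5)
\textbf{Gap in (2)$\Rightarrow$(1).} Your canonical-module argument is correct when $R$ itself has a canonical module $\omega$. In that case Gorenstein projectives are MCM, so $\omega\in\Gproj(R)^{\perp_1}=\mathcal P^{<\infty}(R)$, hence $\omega$ is free by Auslander--Buchsbaum, hence $\omega\cong R$. A general Cohen--Macaulay local ring need not have a canonical module, though. Hypothesis (2) is a statement about $\Gproj(R)^{\perp_1}$, and you give no way to transfer it to $\widehat R$. This is exactly the obstacle you flag, and it is left unresolved.

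The fallback does not close the gap. A sequence $0\to k\to Y\to G\to 0$ with $Y\in\Gproj(R)^{\perp_1}$ and $G\in\Gproj(R)$ is a special preenvelope of $k$. \Cref{T2} does not supply one: it gives a hereditary cotorsion pair, not a complete one, and \Cref{big version-cotorsion} notes that completeness fails in general. Moreover, under (2) such a sequence for $k$ exists if and only if $R$ is Gorenstein, so assuming it is circular.

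\textbf{The missing idea.} You need exactly the ``module over $R$ playing the role of $\omega$'' that you ask for. Every Cohen--Macaulay local ring has a nonzero finitely generated module $N$ of finite injective dimension, for example $N=\Hom_R(R/\x R,E_R(k))$ with $\x$ a maximal $R$-sequence. This $N$ has finite length, and a finite injective resolution of it is the Matlis dual of the Koszul resolution of $R/\x R$. By \Cref{fid contains}, $N\in\Gproj(R)^{\perp_1}$, so $N\in\mathcal P^{<\infty}(R)$ by (2). When $\dim R>0$, $N$ is not MCM, so Auslander--Buchsbaum does not make it free. The paper instead applies Foxby's theorem: a nonzero finitely generated module of finite projective and finite injective dimension forces $R$ to be Gorenstein (\Cref{idpd}, \Cref{CMcase}).

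\textbf{Error in (1)$\Rightarrow$(3).} The sequence $0\to Y\to X\to M\to 0$ splits iff its class in $\Ext^1_R(M,Y)$ vanishes. Knowing $\Ext^1_R(X,M)=0$ says nothing about that class. Your resulting claim that every $M\in\Gproj(R)^{\perp_1}$ is free is false when $\dim R>0$: take $M=R/xR$ for a nonzerodivisor $x\in\m$. Use instead the hull $0\to M\to Y'\to X'\to 0$ with $\operatorname{pd}Y'<\infty$ and $X'$ MCM. It splits because $\Ext^1_R(X',M)=0$, so $M$ is a summand of $Y'$. Alternatively, simply cite the classical result, as the paper does.
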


This paper is organized as follows. We recall basic notions and classical results in Section 2. 
In Section 3, we first prove that if $S$ is a Cohen--Macaulay ring admitting a canonical module $\omega$, and $R$ is a noetherian $S$-algebra which is maximal Cohen--Macaulay as an $S$-module, then the class of finitely generated Gorenstein projective modules is exactly the left $\Ext^1$-orthogonal class of the thick subcategory generated by $R$ and $\Hom_S(R, \omega)$ in $\mo(R)$; see \Cref{characterization}. Using this, we prove \Cref{T2}; see \Cref{cotorsion pair} and \Cref{takanswer1}. We also prove a version of \Cref{takanswer1} for Tor-pairs; see \Cref{Tor-pair}. We give \Cref{counterexample}, showing that for a resolving subcategory $\mathcal X$ of $\mo(R)$, $(\mathcal X, \mathcal X^{\perp_1})$ may not be a cotorsion pair nor $(\X,\X^{\top_1})$ be a Tor-pair in general. When $R$ is an Artin algebra, the category of finitely generated Gorenstein projective modules is the left $\Ext^1$-orthogonal class of some full subcategory of countable type; see \Cref{char-GP-Artin}. As an application, we show that a semi-Gorenstein projective module of finite Auslander bound is Gorenstein projective; see \Cref{Auslanderbound}. 
We prove \Cref{T1} and \Cref{T4} in Section 4. Finally, we prove \Cref{T3} in Section 5.

\begin{ack}
 Part of this work was carried out while Jian Liu and Xue-Song Lu were attending the CIMPA School ``Perspectives in Non-commutative Algebras'', held at Anhui University in September 2025. We thank the organizers, especially Huanhuan Li, for their warm hospitality. We also thank Silu Liu for helpful comments.  Xue-Song Lu is grateful to his advisor, Professor Pu Zhang, for his crucial guidance and helpful suggestions on this work, and he also thanks Shijie Zhu for an inspiring private talk.  

 \vskip5pt

 Xue-Song Lu was supported in part by the National Key Research and Development Project 2025YFA1017202, and the National Natural Science Foundation of China (No. 12131015). Souvik Dey was partially supported by the Charles University Research Center program No.UNCE/SCI/022 and a grant GA \v{C}R 23-05148S from the Czech Science Foundation.
    Jian Liu was supported by the National Natural Science Foundation of China (No. 12401046).

\end{ack}

\section{Preliminaries}
In this article, a noetherian ring is assumed to be two-sided noetherian. 
Throughout, $R$ is always a noetherian ring. Denote by $\Mod(R)$ (resp. $\mo(R)$) the category of all (resp. finitely generated) left $R$-modules, and by $\Proj(R)$ (resp. $\Inj(R)$) the full subcategory of $\Mod(R)$ consisting of all projective (resp. injective) $R$-modules. Denote by $R^{\rm op}$ the opposite ring of $R$. Consequently, a right $R$-module can be viewed as a module in $\Mod(R^{\rm op})$. We will also write $_RM$ (resp. $N_R$) to indicate that $M$ (resp. $N$) is a left (resp. right) $R$-module.

\vskip5pt

We write $\proj(R)$ (resp. $\inj(R)$) for the full subcategory of $\Proj(R)$ (resp. $\Inj(R)$) consisting of all finitely generated projective (resp. injective) modules.  
We use ${\mathcal P}^{<\infty}(R)$ (resp. $\mathcal I^{<\infty}(R)$) to denote the full subcategory of $\mo(R)$ consisting of all finitely generated modules of finite projective (resp. injective) dimension. For an $R$-module $M$, we write $\id_R(M)$ to denote the injective dimension of $M$ over $R$.

\vskip5pt

When $R$ is commutative, we denote by $\dim(R)$ the Krull dimension of $R$.
If, moreover, $R$ is a local ring, then for each $M \in \mo(R)$, we denote by
$\depth(M)$ the depth of $M$ over $R$; that is, the length of a maximal
$M$-regular sequence contained in the maximal ideal of $R$.

\begin{chunk}\textbf{Noetherian algebras.}
     For a commutative noetherian ring $S$, $R$ is said to be a \emph{noetherian $S$-algebra} if there exists a ring homomorphism $\varphi\colon S\rightarrow R$ such that the image of $\varphi$ is in the center of $R$ and $R$ is finitely generated as an $S$-module.
\end{chunk}

\begin{chunk}\label{duality}\textbf{Artin algebras.}
    Let $\Lambda$ be an Artin algebra; that is, there exists a commutative Artin ring $S$ such that $\Lambda$ is an $S$-algebra and is finitely generated as an $S$-module.  Let $J$ be the injective envelope of $\oplus_{i=1}^nS_i$ over $S$, where $S_1,\ldots,S_n$ are all non-isomorphic simple $S$-modules. Set $D=\Hom_S(-,J)$.  It defines natural contravariant functors
\[
\begin{tikzcd}
\Mod(\Lambda)\arrow[r,shift left=0.8ex,"D"]&\arrow[l,shift left=0.8ex,"D"] \Mod(\Lambda^{\rm op})
\end{tikzcd}
\]
It is known that $D$ restricts to a duality between $\mo(\Lambda)$ and $\mo(\Lambda^{\rm op})$.
\end{chunk}

\begin{chunk}\label{def-thick}
    \textbf{Thick subcategories of an abelian category.} A full subcategory $\mathcal C$ of an abelian category $\A$ is called \emph{thick} if it is closed under direct summands and satisfies that if two out of three terms in a short exact sequence are in $\mathcal C$, then so is the third one. For a class of objects $\mathcal S$ in $\A$, we write $\Thick_\A (\mathcal S)$ to be the smallest thick subcategory of $\A$ containing $\mathcal S$. If $\S$ is a class of modules in $\mo(R)$, we will simply use $\Thick(\S)$ to represent $\Thick_{\mo(R)}(\S)$ in this article.
\end{chunk}

\begin{chunk}\label{resolving} 
\textbf{Resolving subcategories of an abelian category}
     A full subcategory $\mathcal C$ of an abelian category $\A$ is called \emph{resolving} if it contains all projective objects and is closed under extensions, direct summands, and kernels of epimorphisms. When $\A$ contains enough projectives, the closure under kernel of epimorphisms can be replaced by the weaker condition of closure under syzygies. 
\end{chunk}

\begin{chunk}\label{torsionless}
\textbf{Torsionless modules and reflexive modules.}
For each $M\in\mo(R)$, consider the map
$$
\varphi_M\colon M\rightarrow \Hom_{R^{\rm op}}(\Hom_R(M,R),R); m\mapsto (f\mapsto f(m)).
$$
A module $M$ is called \emph{torsionless} if $\varphi_M$ is a monomorphism, and \emph{reflexive} if $\varphi_M$ is an isomorphism.
It is known that $M$ is torsionless if and only if $M$ is a submodule of some projective module.
\end{chunk}

\begin{chunk}
    \textbf{Maximal Cohen--Macaulay modules and Cohen--Macaulay rings.}\label{mcm}
 Let $R$ be a commutative noetherian ring. A module $M\in\mo(R)$ is called \emph{maximal Cohen--Macaulay} if $\depth(M_\p)\geq\dim(R_\p)$ for each prime ideal $\p$ of $R$; note that the zero module is maximal Cohen--Macaulay. We refer the reader to \cite[2.1]{BH} for more details. 

\vskip5pt
 
 Let $\MCM(R)$ denote the full subcategory of $\mo(R)$ consisting of all maximal Cohen--Macaulay modules. 
The ring $R$ is said to be \emph{Cohen--Macaulay} provided that $R\in \MCM(R)$. 

\end{chunk}

\begin{chunk}\label{def of canonical}
    \textbf{Canonical modules.}
For a Cohen--Macaulay local ring $(R,\m,k)$, where $\m$ is the maximal ideal and $k$ is the residue field $R/\m$, a module $\omega\in\mo(R)$ is said to be \emph{canonical} if $\Ext^d_R(k,\omega)\cong k$ and $\Ext^i_R(k,\omega)=0$ for $i\neq d$, where $d=\dim(R)$; see \cite[3.3]{BH}. In this case, $\id_R(\omega)<\infty$. 

\vskip5pt

In general, for a Cohen--Macaulay ring $R$, a finitely generated module $\omega$ is called \emph{canonical} provided that $\omega_\p$ is a canonical module over $R_\p$ for each prime ideal $\p$ of $R$. In this case, if furthermore $\dim(R)<\infty$, then $\id_R(\omega)<\infty$; see, for example, \cite[Theorem 3.1.17]{BH}.

\vskip5pt

Canonical modules are also referred to as dualizing modules in the literature; see, for example, \cite{Auslander-Buchweitz}.

\vskip5pt

As an example, a commutative Artin ring is always a Cohen--Macaulay ring, and the module $J$ as in \Cref{duality} is a canonical module. 

\end{chunk}
\begin{chunk}\label{def-Gor}
 \textbf{(Iwanaga--)Gorenstein rings.} 
A noetherian ring $R$ is called \emph{Iwanaga--Gorenstein} provided that $R$ has finite injective dimension over both sides. By \cite[Lemma A]{Zaks}, if $R$ is Iwanaga--Gorenstein, then $\id_R(R)=\id_{R^{\rm op}}(R)<\infty$.

    \vskip5pt
     
     A commutative noetherian ring $R$ is said to be \emph{Gorenstein} provided that $R_\p$ is Iwanaga--Gorenstein for each prime ideal $\p$ of $R$. For a commutative noetherian ring $R$, if $R$ is Iwanaga--Gorenstein, then $R$ is Gorenstein. The converse holds if, in addition, $R$ is of finite Krull dimension; see \cite[Theorem 3.17]{BH}.
\end{chunk}
\begin{chunk}\label{def-GP}
\textbf{Gorenstein projective (resp. injective) modules.}
An $R$-module $M$ is called \emph{Gorenstein projective} (resp. \emph{Gorenstein injective}) if there exists an acyclic complex of projective (resp. injective) $R$-modules
$$
\mathbf{P}\colon \cdots \longrightarrow P_1 \xrightarrow{d_1} P_0 \xrightarrow{d_0} P_{-1} \longrightarrow \cdots \quad \text{(resp. } \mathbf{I}\colon \cdots \longrightarrow I_{1} \xrightarrow{\partial_{1}} I_0 \xrightarrow{\partial_0} I_{-1} \longrightarrow \cdots\text{)}
$$
such that $\Hom_R(\mathbf{P}, P)$ (resp. $\Hom_R(I, \mathbf{I})$) is acyclic for each $P\in \Proj(R)$ (resp. $I\in \Inj(R)$), and $M$ is isomorphic to the image of $d_0$ (resp.  $\partial_{0}$). 
The complex $\mathbf{P}$ (resp. $\mathbf I$) is called a \emph{totally acyclic complex} of projective (resp. injective) modules. 
We write $\GProj(R)$ (resp. $\GInj(R)$) to be the full subcategory of $\Mod(R)$ consisting of Gorenstein projective (resp. Gorenstein injective) modules.  We refer the reader to \cite[Chapter 10]{Enochs-Jenda} for more details.

\vskip5pt

We write $\Gproj(R)$ to be the full subcategory of $\mo(R)$ consisting of finitely generated Gorenstein projective $R$-modules. 

\end{chunk}

\begin{chunk}\label{def-cotorsion}
    \textbf{Cotorsion pairs.} Let $\A$ be an abelian category. For a subcategory $\mathcal C$ of $\A$, we write
$$
\mathcal C^{\perp_1}\colonequals\{X\in \A\mid \Ext^1_\A(M,X)=0, \forall  M\in \mathcal C\},
$$
$$
\mathcal C^{\perp_\infty}\colonequals\{X\in \A\mid \Ext^i_\A(M,X)=0, \forall i>0, M\in \mathcal C\},
$$
and 
$$
\leftindex^{\perp_1} {\mathcal C}\colonequals \{X\in \A\mid \Ext^1_\A(X,M)=0, \forall M\in \mathcal C\},
$$
$$
\leftindex^{\perp_\infty} {\mathcal C}\colonequals \{X\in \A\mid \Ext^i_\A(X,M)=0, \forall i>0, M\in \mathcal C\}.
$$


It is known that in $\mo(R)$, we have
$$
{\mathcal P}^{<\infty}(R)\subseteq \Gproj(R)^{\perp_\infty} \text{ and } \Gproj(R)\subseteq \leftindex^{\perp_\infty} {\mathcal P}^{<\infty}(R)
$$

  Let $\mathcal X,\mathcal Y$ be full subcategories of $\A$. Following from \cite{Sal}, the pair $(\mathcal X,\mathcal Y)$ is said to be a \emph{cotorsion pair} if $\mathcal X^{\perp_1}=\mathcal Y$ and $\mathcal X=  \leftindex^{\perp_1}{\mathcal Y}$. A cotorsion pair $(\mathcal X, \mathcal Y)$ is said to be \emph{complete} if for any $M\in \mathcal A$, there are two short exact sequences
  \[
    0\rightarrow Y \rightarrow X \rightarrow M \rightarrow 0 \quad\quad  0 \rightarrow M \rightarrow Y' \rightarrow X' \rightarrow 0, 
  \]
  where $X,X'\in \mathcal X$, $Y,Y'\in \mathcal Y$. A cotorsion pair $(\mathcal X, \mathcal Y)$ is said to be \emph{hereditary} if $\mathcal X={}^{\perp_{\infty}}\mathcal Y$, or equivalently, $\mathcal Y=\mathcal X^{\perp_{\infty}}$. We note that if $\A$ contains enough projective objects then a cotorsion pair $(\mathcal X, \mathcal Y)$ is hereditary if and only if $\mathcal X$ is resolving. See more details in \cite[Section 5]{GT}. For example, if $R$ is an Iwanaga--Gorenstein ring, then $(\Gproj(R),{\mathcal P}^{<\infty}(R))$ is a hereditary complete cotorsion pair in $\mo (R)$; see, for example, \cite[Theorem 8.3]{H}. 

  \vskip5pt


\end{chunk}

\begin{chunk}\label{defofvirtuallyG}
\textbf{Virtually Gorenstein rings.}
A noetherian ring $R$ is said to be \emph{virtually Gorenstein} if $ \GProj(R)^{\perp_\infty}= \leftindex^{\perp_\infty} \GInj(R)$. By \cite[Theorem A.1]{Iyengar-Krause2022}, any Gorenstein $A$-algebra in the sense of \cite[Section 4]{IK} is virtually Gorenstein, where $A$ is a commutative noetherian ring. In particular, any commutative Gorenstein ring is virtually Gorenstein. 
\end{chunk}

\begin{chunk}\label{approximation}
    \textbf{Left approximations and covariantly finite subcategories.} Let $\mathcal C$ be a full subcategory of an abelian category $\A$. For each $X\in \mathcal A$ and a morphism $f\colon X\rightarrow C$ with $C\in \mathcal C$, the morphism $f$ is said to be a \emph{left $\mathcal C$-approximation} if any morphism $f^\prime\colon X\rightarrow C^\prime$ with $C^\prime \in \mathcal C$ factors through $f$.  The subcategory $\mathcal C$ is said to be a \emph{covariantly finite} subcategory of $\A$ provided that any object $X\in \A$ has a left $\mathcal C$-approximation. Dually, there are notions of right approximations and contravariantly finite subcategories. 

\vskip5pt

    Let $(\mathcal X,\mathcal Y)$ be a complete cotorsion pair of $\mathcal A$. Then, $\mathcal X$ is contravariantly finite and $\mathcal Y$ is covariantly finite. 

\vskip5pt

    Let $M$ be an $R$-module and $\add M$ be the full subcategory of $\mo(R)$ consisting of modules that are direct summands of finite direct sums of $M$. When $R$ is a noetherian algebra, it is known that $\add M$  is both covariantly finite and contravariantly finite in $\mo(R)$; see, for example, \cite[Proposition 4.2]{AS}.
\end{chunk}

\begin{chunk}\textbf{A criterion for Gorenstein projective modules. }\label{complete resolution}
    Let $R$ be a noetherian ring and $M$ be a finitely generated $R$-module. Assume that $\Ext^i_R(M,R)=0$ for each $i>0$ and there exists an exact sequence of $R$-modules
    \[
0 \rightarrow M\xrightarrow{d_0} P_{-1} \xrightarrow{d_{-1}} P_{-2}\xrightarrow{d_{-2}}\cdots\rightarrow P_{-n}\rightarrow \cdots,
\]
where $P_{-i}\in \proj(R)$ and the natural embedding $\Ima d_{-(i-1)}\rightarrow P_{-i}$ is a left $\proj(R)$-approximation for each $i\geq 1$. Then $M\in \Gproj(R)$.

\vskip5pt
Indeed, splicing a projective resolution of $M$ with the projective coresolution of $M$ constructed above produces a totally acyclic complex of finitely generated projective $R$-modules. Consequently, $M$ is Gorenstein projective.
\end{chunk}

\section{Characterizations of Gorenstein projective modules}
The main result of this section is \Cref{T2} from the introduction; see \Cref{cotorsion pair} and \Cref{takanswer1}. They together with \Cref{characterization} and \Cref{char-GP-Artin} provide a description of the finitely generated Gorenstein projective modules for two certain classes of rings. 


\vskip5pt

The following lemma is well-known. 

\begin{lemma}\label{thick subcategory}
    Let $R$ be a noetherian ring. In $\mo(R)$, there are 
    \begin{enumerate}
        \item $\Gproj(R)^{\perp_1}=\Gproj(R)^{\perp_\infty}$.
        \item $\Gproj(R)^{\perp_1}$ is a thick subcategory.
    \end{enumerate}
\end{lemma}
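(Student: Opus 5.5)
For (1), the inclusion $\Gproj(R)^{\perp_\infty}\subseteq \Gproj(R)^{\perp_1}$ holds by definition, so the content is the reverse inclusion. I will use dimension shifting through cosyzygies inside $\Gproj(R)$. Let $Y\in \Gproj(R)^{\perp_1}$, let $M\in\Gproj(R)$, and fix $i\geq 2$. Since $M$ is finitely generated Gorenstein projective, it admits a totally acyclic complex of projectives. By the standard fact for noetherian rings, this complex may be taken to consist of finitely generated projectives, namely a complete resolution of $M$ by modules in $\proj(R)$. The images in this complex are again in $\Gproj(R)$. In particular there are short exact sequences
\[
0\rightarrow M\rightarrow P\rightarrow M'\rightarrow 0
\]
with $P\in\proj(R)$ and $M'\in \Gproj(R)$. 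Iterating, we get $M\cong \Omega^{i-1}N$ for some $N\in\Gproj(R)$, computed via projectives in $\proj(R)$. Dimension shifting then gives $\Ext^i_R(M,Y)\cong \Ext^1_R(N,Y)=0$. The only nontrivial input is that the cosyzygies of a finitely generated Gorenstein projective module can be chosen finitely generated Gorenstein projective. I will justify this by dualizing: $M^*=\Hom_R(M,R)$ is finitely generated with vanishing $\Ext^{>0}_{R^{\rm op}}(M^*,R)$. Taking a finitely generated projective resolution of $M^*$ and dualizing back, using that $M$ is reflexive, gives the coresolution. Alternatively, I will simply cite this as the well-known fact that $\Gproj(R)$ is the class of totally reflexive modules.

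For (2), I will check thickness of $\mathcal Y:=\Gproj(R)^{\perp_1}=\Gproj(R)^{\perp_\infty}$, using (1) freely. Closure under direct summands is immediate, since $\Ext^1_R(M,-)$ is additive. Now take a short exact sequence $0\to Y_1\to Y_2\to Y_3\to 0$ in $\mo(R)$ and $M\in\Gproj(R)$. The long exact sequence
\[
\cdots\rightarrow \Ext^i_R(M,Y_1)\rightarrow \Ext^i_R(M,Y_2)\rightarrow \Ext^i_R(M,Y_3)\rightarrow \Ext^{i+1}_R(M,Y_1)\rightarrow\cdots
\]
handles the cases directly when vanishing for all $i\geq1$ is used. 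If $Y_1,Y_2\in\mathcal Y$, then $\Ext^i(M,Y_3)$ sits between $\Ext^i(M,Y_2)=0$ and $\Ext^{i+1}(M,Y_1)=0$. If $Y_1,Y_3\in\mathcal Y$, or $Y_2,Y_3\in\mathcal Y$, the argument is the same, except that the last case needs $\Ext^1_R(M,Y_1)$. That group is the cokernel of $\Hom_R(M,Y_2)\to\Hom_R(M,Y_3)$, and surjectivity of this map is not automatic.

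The main obstacle is therefore the case $Y_2,Y_3\in \mathcal Y\Rightarrow Y_1\in\mathcal Y$, specifically the vanishing of $\Ext^1_R(M,Y_1)$. Here I will again use the short exact sequence $0\to M\to P\to M'\to0$ from (1), with $M'\in\Gproj(R)$. This gives
\[
\Ext^1_R(M,Y_1)\cong\Ext^2_R(M',Y_1).
\]
The right-hand side sits between $\Ext^1_R(M',Y_3)=0$ and $\Ext^2_R(M',Y_2)=0$ in the long exact sequence for $M'$, so it vanishes. The higher groups $\Ext^i_R(M,Y_1)$ for $i\geq2$ follow directly from the long exact sequence. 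Hence $\mathcal Y$ is thick.
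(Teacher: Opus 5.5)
\textbf{Part (1) has a genuine error: your dimension shift runs the wrong way.} From $0\to M\to P\to M'\to 0$ with $P$ projective, the long exact sequence gives $\Ext^j_R(M,Y)\cong\Ext^{j+1}_R(M',Y)$ for $j\geq 1$. So passing to a cosyzygy raises the Ext degree.

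Hence if $M\cong\Omega^{i-1}N$, what you actually get is $\Ext^1_R(M,Y)\cong\Ext^i_R(N,Y)$ and $\Ext^i_R(M,Y)\cong\Ext^{2i-1}_R(N,Y)$, not $\Ext^i_R(M,Y)\cong\Ext^1_R(N,Y)$. The claimed isomorphism is false. Writing $M$ as a high syzygy of something can never bring the degree down to $1$.

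The fix is to shift along syzygies of $M$ itself: $\Ext^i_R(M,Y)\cong\Ext^1_R(\Omega^{i-1}M,Y)$. Here $\Omega^{i-1}M$ is the image $i-1$ steps to the left of $M$ in its complete resolution, so it lies in $\Gproj(R)$, and the group vanishes for $Y\in\Gproj(R)^{\perp_1}$. So the input (1) needs is closure of $\Gproj(R)$ under syzygies, i.e.\ the resolving property the paper invokes. It is not the cosyzygy fact you singled out as ``the only nontrivial input.''

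\textbf{Part (2) is correct once (1) is repaired.} This is where the cosyzygy fact genuinely matters, and there you use it in the right direction. The isomorphism $\Ext^1_R(M,Y_1)\cong\Ext^2_R(M',Y_1)$ lets you squeeze that group between $\Ext^1_R(M',Y_3)=0$ and $\Ext^2_R(M',Y_2)=0$.

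The paper instead obtains (2) by combining (1) with \cite[Proposition 4.4]{Takahashi}. Your long-exact-sequence argument is a self-contained substitute for that citation. It also makes explicit that closure under kernels of epimorphisms is the one place where resolvingness alone is not enough: you need each module of $\Gproj(R)$ to embed in a projective with cokernel again in $\Gproj(R)$.
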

\begin{proof}
    $(1)$ 
   This follows from the fact that $\Gproj(R)$ is a resolving subcategory of $\mo(R)$. $(2)$ Combining with (1), the statement follows from \cite[Proposition 4.4]{Takahashi}.
\end{proof}

\begin{corollary}\label{fid contains}
   For a noetherian ring $R$, there is an inclusion in $\mo(R)$
   $$
   \Thick(\proj(R)\cup {\mathcal I}^{<\infty}(R))\subseteq \Gproj(R)^{\perp_1}.
   $$ 
\end{corollary}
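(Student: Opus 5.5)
By \Cref{thick subcategory}(2), $\Gproj(R)^{\perp_1}$ is a thick subcategory of $\mo(R)$. It therefore suffices to show that it contains both $\proj(R)$ and ${\mathcal I}^{<\infty}(R)$; minimality of $\Thick(\proj(R)\cup {\mathcal I}^{<\infty}(R))$ then gives the inclusion.

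For $\proj(R)$, let $M\in\Gproj(R)$. By definition $M$ is a syzygy in a totally acyclic complex $\mathbf P$, so $\Ext^i_R(M,P)=0$ for all $i>0$ and every projective $P$. In particular $\Ext^1_R(M,P)=0$, so $\proj(R)\subseteq\Gproj(R)^{\perp_1}$. This is also the inclusion ${\mathcal P}^{<\infty}(R)\subseteq \Gproj(R)^{\perp_\infty}$ recorded in \Cref{def-cotorsion}.

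For ${\mathcal I}^{<\infty}(R)$, let $N\in\mo(R)$ with $\id_R(N)=n<\infty$, and let $M\in\Gproj(R)$. The totally acyclic complex yields an exact sequence $0\to M\to P_{-1}\to\cdots\to P_{-n}\to M'\to 0$ with $P_{-j}$ projective and $M'\in\Gproj(R)$. The cokernels of the first $n-1$ maps are again Gorenstein projective, and $\Ext^{\geq1}_R(P_{-j},-)=0$. Dimension shifting along this sequence therefore gives
\[
\Ext^1_R(M,N)\cong\Ext^{n+1}_R(M',N).
\]
The right-hand side vanishes because $\id_R(N)=n$. Hence $N\in\Gproj(R)^{\perp_1}$.

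The only point needing care is that Ext computed in $\mo(R)$ agrees with Ext in $\Mod(R)$. This holds because $R$ is noetherian and finitely generated modules have resolutions by finitely generated projectives. With that, no step is a real obstacle: the argument is just thickness from \Cref{thick subcategory} plus the standard dimension shift.
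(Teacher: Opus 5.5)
Your proposal is correct and follows the paper's proof: both show that $\proj(R)\cup{\mathcal I}^{<\infty}(R)\subseteq\Gproj(R)^{\perp_1}$ and then invoke the thickness of $\Gproj(R)^{\perp_1}$ from \Cref{thick subcategory}~(2). The only difference is that you spell out the dimension shift along the totally acyclic complex, which the paper treats as a known fact in $\Mod(R)$.
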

\begin{proof}
    Note that in $\Mod(R)$, for $G\in \GProj(R)$ and $X\in \proj(R)\cup {\mathcal I}^{<\infty}(R)$, there is $\Ext_R^1(G, X)=0$. It follows that $\proj(R)\cup {\mathcal I}^{<\infty}(R)\subseteq\Gproj(R)^{\perp_1}$ in $\mo(R)$. The desired result now follows from \Cref{thick subcategory} (2). 
\end{proof}

\begin{lemma}\label{global version}
    Let $S$ be a Cohen--Macaulay ring that admits a canonical module $\omega$. Then
\begin{enumerate}
    \item $\MCM(S)=\leftindex^{\perp_\infty}\omega$ in $\mo(S)$. 
    \item For $M\in \MCM(S)$, the canonical map $M\rightarrow \Hom_S(\Hom_S(M,\omega),\omega);~x\mapsto (f\mapsto f(x))$ is an isomorphism. 
\end{enumerate}
\end{lemma}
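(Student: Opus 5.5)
The strategy is to reduce both statements to the local case, where they are the standard properties of canonical modules over Cohen--Macaulay local rings (\cite[Theorem 3.3.10]{BH}). For finitely generated $M,N$ over $S$ we have $\Ext^i_S(M,N)_\p\cong\Ext^i_{S_\p}(M_\p,N_\p)$ and $\Hom_S(M,N)_\p\cong \Hom_{S_\p}(M_\p,N_\p)$. Maximal Cohen--Macaulayness is, by definition in \ref{mcm}, a local condition, and so is bijectivity of a map of modules. Finally, $\omega_\p$ is a canonical module of the Cohen--Macaulay local ring $S_\p$. So both statements are local on $\operatorname{Spec} S$.

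\textbf{Proof of (1).} Let $M$ be finitely generated.

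First suppose $M\in\MCM(S)$. For each $\p$, $M_\p$ is zero or maximal Cohen--Macaulay over $S_\p$. The local result gives $\Ext^i_{S_\p}(M_\p,\omega_\p)=0$ for $i>0$; this follows from local duality, or by induction on $\dim S_\p$ cutting by a regular sequence. Hence $\Ext^i_S(M,\omega)_\p=0$ for all $\p$, so $\Ext^i_S(M,\omega)=0$ for all $i>0$.

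Conversely, suppose $\Ext^i_S(M,\omega)=0$ for all $i>0$. Localizing, $\Ext^i_{S_\p}(M_\p,\omega_\p)=0$ for $i>0$. We must show $\depth M_\p\ge\dim S_\p$. By local duality over $S_\p$, with $d=\dim S_\p$, we have $\Ext^{d-j}_{S_\p}(M_\p,\omega_\p)\ne0$ for $j=\depth M_\p$ whenever $M_\p\neq 0$ (\cite[3.5.11]{BH}). This requires passing to the completion, which is harmless since Ext commutes with completion and $\widehat{\omega_\p}$ is canonical for $\widehat{S_\p}$.

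An alternative avoiding local duality is to argue via the Ext characterization of depth. Since $\id\omega_\p=d$, we get $\Ext^{d-\depth M_\p}(M_\p,\omega_\p)\neq 0$ (\cite[3.3.10]{BH}). The vanishing for positive indices then forces $d-\depth M_\p\le 0$, i.e.\ $\depth M_\p\ge d$, so $M\in\MCM(S)$.

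\textbf{Proof of (2).} The biduality map localizes to the biduality map of $M_\p$ with respect to $\omega_\p$. That map is an isomorphism for maximal Cohen--Macaulay $S_\p$-modules by \cite[Theorem 3.3.10(d)]{BH}. Since this holds at every prime, the global map is an isomorphism.

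\textbf{Main obstacle.} The only delicate point is the converse direction of (1), namely the nonvanishing of $\Ext^{d-\depth M}(M,\omega)$. I would cite the local statement from \cite{BH} rather than reprove it. One must also note that $\dim S$ may be infinite; this does no harm, because every argument is carried out prime by prime and never uses finiteness of $\id_S\omega$.
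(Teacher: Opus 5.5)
Your proof is correct and follows the paper's route. Both arguments reduce everything to the local rings $S_\p$, use \cite[Theorem 3.3.10(d)]{BH} for the vanishing of $\Ext^{>0}_{S_\p}(M_\p,\omega_\p)$ and for the biduality isomorphism, and obtain the converse of (1) from the nonvanishing of $\Ext^{d-\depth M_\p}_{S_\p}(M_\p,\omega_\p)$.

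The only correction concerns the reference in your ``alternative'' argument. For an arbitrary, not necessarily Cohen--Macaulay, $M_\p$, that nonvanishing is Ischebeck's formula \cite[Exercise 3.1.24]{BH}. It needs only $\id\omega_\p<\infty$ and is exactly what the paper uses. It does not follow from \cite[3.3.10]{BH}, which treats only Cohen--Macaulay modules.
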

\begin{proof}
    $(1)$ First, we show that $\MCM(S)\subseteq\leftindex^{\perp_\infty}\omega$. It suffices to show that for each $M\in \MCM(S)$, prime ideal $\p$ and $i\geq 1$, there is $\Ext_{S_\p}^i(M_\p, \omega_\p)=0$. Since $S_\p$ is a Cohen--Macaulay local ring that admits a canonical module $\omega_\p$ and $M_\p\in \MCM(S_\p)$, one has $\Ext_{S_\p}^i(M_\p, \omega_\p)=0$ by \cite[Theorem 3.3.10 (d) (ii)]{BH}. 

\vskip5pt

    Conversely, suppose that $M\in \leftindex^{\perp_\infty}\omega$. Since $S$ is a Cohen--Macaulay ring, it suffices to show that for each prime ideal $\p$ such that $M_\p\ne 0$, there is $\depth(M_\p)=\depth(S_\p)$. Since $\Ext_{S_p}^i(M_\p, \omega_\p)=0$ and $\id_{S_\p} (\omega_\p)<\infty$ (see \Cref{def of canonical}), $\depth(M_\p)=\depth(S_\p)$ by \cite[3.1.24]{BH}. 

\vskip5pt

    $(2)$ By \cite[Theorem 3.3.10 (d) (iii)]{BH}, for $M\in \MCM(S)$ and any prime ideal $\p$, the canonical map $M_\p\rightarrow \Hom_{S_\p}(\Hom_{S_\p}(M_{\p},\omega_{\p}),\omega_{\p})$ is an isomorphism. Thus, the canonical map $M\rightarrow \Hom_S(\Hom_S(M,\omega),\omega)$ is an isomorphism. 
\end{proof}

\begin{lemma}\label{monomorphism}
    Let $S$ be a Cohen--Macaulay ring that admits a canonical module $\omega$, and $R$ a noetherian $S$-algebra. For each $M\in\mo(R)$, if $M$ is maximal Cohen--Macaulay as an $S$-module, then there is an exact sequence of $R$-modules
    \[
     0 \rightarrow M \rightarrow (R^\dagger)^{m_0} \rightarrow (R^\dagger)^{m_1} \rightarrow \cdots \rightarrow (R^\dagger)^{m_n} \rightarrow \cdots, 
    \]
    where each $m_i\geq 0$ and $(-)^\dagger\colonequals \Hom_S(-,\omega)$. In particular, there is a monomorphism $M\rightarrow ({R}^\dagger)^{m_0}$ in $\mo(R)$. 
\end{lemma}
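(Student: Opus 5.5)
The plan is to dualize a free presentation of $M^\dagger=\Hom_S(M,\omega)$, then iterate.

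\emph{Step 1: setting up the dual.} Since $M$ is a left $R$-module and $S$ maps into the center of $R$, the module $M^\dagger=\Hom_S(M,\omega)$ is a finitely generated right $R$-module, i.e.\ $M^\dagger\in\mo(R^{\rm op})$. Likewise $R^\dagger=\Hom_S(R,\omega)$ is an $R$-bimodule, and as a left $R$-module it satisfies
\[
\Hom_S(R^m,\omega)\cong (R^\dagger)^m,
\]
where $R^m$ is regarded as a right free module. Because $M\in\MCM(S)$, \Cref{global version} (1) gives $\Ext^i_S(M,\omega)=0$ for all $i>0$.

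\emph{Step 2: the first map.} Choose a surjection $R^{m_0}\to M^\dagger$ of right $R$-modules and let $K$ be its kernel. Applying $\Hom_S(-,\omega)$ gives an exact sequence of left $R$-modules
\[
0\to M^{\dagger\dagger}\to (R^\dagger)^{m_0}\to K^\dagger.
\]
By \Cref{global version} (2), the canonical map $M\to M^{\dagger\dagger}$ is an isomorphism, and it is $R$-linear. This yields the monomorphism $M\to (R^\dagger)^{m_0}$.

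\emph{Step 3: iterating.} Let $C$ be the cokernel of $M\to (R^\dagger)^{m_0}$. To continue, I need two things: that $C$ is again maximal Cohen--Macaulay over $S$, and that $C\cong K^\dagger$. Both reduce to one fact: $M^\dagger$ is MCM over $S$. This holds by the standard duality on $\MCM(S)$, which can be checked locally using \cite[Theorem 3.3.10]{BH}. Given that, $K$ is MCM as a kernel of an epimorphism between MCM modules; this uses the depth lemma, since $R$ is MCM over $S$ by hypothesis. Then $\Ext^1_S(M^\dagger,\omega)=0$, so the dual sequence is short exact:
\[
0\to M\to (R^\dagger)^{m_0}\to K^\dagger\to 0.
\]
Moreover $K^\dagger$ is MCM, again by the MCM duality. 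Thus $C\cong K^\dagger$ is a left $R$-module that is MCM over $S$, and applying the same construction to $C$ and splicing produces the infinite exact sequence inductively.

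\emph{Main obstacle.} The crux is Step 3, namely keeping everything maximal Cohen--Macaulay. One must verify that $(-)^\dagger$ preserves $\MCM(S)$ and is exact on short exact sequences of MCM modules. Each of these reduces to \Cref{global version} together with localization. For the kernel $K$ to be MCM we need $R$, hence $R^{m_0}$, to be MCM over $S$, which is exactly the hypothesis on $R$.
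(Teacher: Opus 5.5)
Your argument is the paper's argument, unspliced. The paper dualizes an entire free resolution of $M^\dagger$ over $R^{\rm op}$ at once; you dualize a presentation, identify the cokernel with $K^\dagger$, and iterate. The depth bookkeeping you make explicit in Step~3 is exactly what the paper compresses into ``applying $\Hom_S(-,\omega)$, one gets an exact sequence''.

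One correction. The lemma as stated does \emph{not} assume that $R$ is maximal Cohen--Macaulay over $S$, so you are importing that hypothesis from the surrounding results rather than using ``the hypothesis on $R$''. Step~2, and hence the monomorphism $M\to (R^\dagger)^{m_0}$, needs nothing about $R$. Step~3 does need it, and so does the paper's proof: $\Hom_S(-,\omega)$ turns a free resolution of $M^\dagger$ into an exact complex only when $\Ext^{>0}_S(R,\omega)=0$.

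The assumption cannot simply be dropped. Take $S=k\llbracket x,y,z\rrbracket$, $\omega=S$, $P=\Omega^2_S k$, $R=S\ltimes P$ and $M=R/(0\ltimes P)$. Over $S$ one has $(R^\dagger)^m\cong S^m\oplus (P^*)^m$, which has depth $2$ and projective dimension $1$. So every cosyzygy $C_j$ (with $C_0=M$) of a sequence as in the lemma would have projective dimension at most $1$. This forces $\Ext^1_S((R^\dagger)^{m_1},S)\to \Ext^1_S(C_1,S)$ to be onto.

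On the other hand, the embedding $C_1\to (R^\dagger)^{m_1}$ carries the submodule $C_1'$ of elements annihilated by $0\ltimes P$ into the free $S$-summand $S^{m_1}$. Since $\Ext^1_S(S^{m_1},S)=0$, the image of that map lies in the kernel of restriction $\Ext^1_S(C_1,S)\to\Ext^1_S(C_1',S)$. Yet $C_1'$ contains an $S$-direct summand $D$ of $C_1$ with $\Ext^1_S(D,S)\neq 0$. Let $v\in S^{m_0}\subseteq (R^\dagger)^{m_0}$ be the image of $1\in M$. If $v$ is not unimodular, take $D=S^{m_0}/Sv$, for which $\Ext^1_S(D,S)$ is $S$ modulo the ideal generated by the entries of $v$. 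If $v$ is unimodular, take $D$ to be a copy of $P^*$ on which $0\ltimes P$ acts trivially.

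So the infinite sequence should be stated under $R\in\MCM(S)$. This is harmless for the paper, which uses the full sequence only for $M=R$ (where the hypothesis is forced) and otherwise uses only the monomorphism.
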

\begin{proof}
Choose a free resolution of $M^\dagger$ in $\mo(R^{\rm op})$
\[
\cdots \rightarrow R^{m_n}\rightarrow \cdots \rightarrow R^{m_1}\rightarrow R^{m_0}\rightarrow M^\dagger\rightarrow 0,
\]
Since $M$ is maximal Cohen--Macaulay as an $S$-module, $M^\dagger$ is also maximal Cohen--Macaulay as an $S$-module by \cite[Theorem 3.3.10 (d) (i)]{BH}. Also, $\Ext_S^i(M^\dagger, \omega)=0$ for $i>0$; see \Cref{global version}. Thus, applying $\Hom_S(-,\omega)$, one gets an exact sequence
 \[
     0 \rightarrow M^{\dagger\dagger} \rightarrow (R^\dagger)^{m_0} \rightarrow (R^\dagger)^{m_1} \rightarrow \cdots \rightarrow (R^\dagger)^{m_n} \rightarrow \cdots, 
 \]
  By \Cref{global version}, $M\cong M^{\dagger\dagger}$ as $R$-modules. This completes the proof.
\end{proof}

\begin{lemma}\label{change of ring-iso}
     Let $S$ be a Cohen--Macaulay ring that admits a canonical module $\omega$. Assume that $R$ is a noetherian $S$-algebra and that $R$, viewed as an $S$-module, is maximal Cohen--Macaulay. Then, for each $M\in \mo(R)$ and $i\geq 0$, there is an isomorphism
     $$ 
     \Ext^i_S(M,\omega)\cong \Ext^i_R(M,{R}^\dagger).
     $$
\end{lemma}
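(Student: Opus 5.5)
The isomorphism is an instance of change-of-rings adjunction. The underived version is the Hom--tensor adjunction
$$
\Hom_R(M,\Hom_S(R,\omega))\cong \Hom_S(R\otimes_R M,\omega)=\Hom_S(M,\omega),
$$
natural in $M\in\mo(R)$. Here $R^\dagger=\Hom_S(R,\omega)$ carries its left $R$-module structure from the right action of $R$ on itself. I will derive this adjunction by choosing a projective resolution of $M$ over $R$ and showing that it computes both sides.

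Let $P_\bullet\to M\to 0$ be a resolution of $M$ by finitely generated projective (say free) left $R$-modules. Applying $\Hom_R(-,R^\dagger)$ gives a complex whose cohomology is $\Ext^i_R(M,R^\dagger)$. By the adjunction above, applied termwise and naturally in the module, this complex is isomorphic to $\Hom_S(P_\bullet,\omega)$. It therefore suffices to show that $P_\bullet$, regarded as a complex of $S$-modules, can be used to compute $\Ext^i_S(M,\omega)$. The complex $P_\bullet$ need not consist of projective $S$-modules, so I will instead show that its terms are $\Hom_S(-,\omega)$-acyclic.

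Each $P_j$ is a direct summand of some $R^{n}$. Since $R$ is maximal Cohen--Macaulay over $S$, so is $P_j$, because $\MCM(S)$ is closed under finite direct sums and direct summands. By \Cref{global version}(1), $\Ext^i_S(P_j,\omega)=0$ for all $i>0$. A standard dimension-shifting argument, or the acyclic-resolution lemma applied to the bounded-below complex $P_\bullet$ with $\Hom_S(-,\omega)$-acyclic terms, then gives
$$
H^i\bigl(\Hom_S(P_\bullet,\omega)\bigr)\cong \Ext^i_S(M,\omega)
$$
for all $i\ge 0$. Concretely, one splits $P_\bullet$ into short exact sequences $0\to \Omega^{j+1}M\to P_j\to \Omega^jM\to 0$ and uses the long exact $\Ext_S(-,\omega)$ sequences together with the vanishing of $\Ext^{>0}_S(P_j,\omega)$. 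Combining this with the previous paragraph yields $\Ext^i_S(M,\omega)\cong\Ext^i_R(M,R^\dagger)$.

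The one point requiring care is checking that the adjunction isomorphism is natural, so that it commutes with the differentials of $P_\bullet$ and hence passes to cohomology. The isomorphism sends $f\colon M\to R^\dagger$ to $m\mapsto f(m)(1)$, which is visibly natural in $M$. The essential input is the vanishing $\Ext^{>0}_S(R,\omega)=0$ supplied by \Cref{global version}(1) and the maximal Cohen--Macaulay hypothesis on $R$; I do not expect any other obstacle.
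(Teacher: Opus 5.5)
Your proof is correct and follows essentially the paper's argument. The paper observes that $\RHom_S(R,\omega)\simeq R^\dagger$ because $R\in\MCM(S)$, then applies the derived tensor--hom adjunction. Your termwise adjunction on an $R$-projective resolution of $M$, combined with the vanishing $\Ext^{>0}_S(P_j,\omega)=0$ from \Cref{global version}(1), is the resolution-level unpacking of that same statement, resting on the same two inputs.
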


\begin{proof}
Since $R\in \MCM(S)$, the complex $\RHom_S(R, \omega)$ is quasi-isomorphic to $ R^\dagger$, and hence the isomorphism follows from the derived tensor–hom adjunction. Alternatively, it can also be seen directly from the proof of \cite[Theorem 1.2 (b)]{M}.
\end{proof}
 
\begin{theorem}\label{characterization}
 Let $S$ be a Cohen--Macaulay ring that admits a canonical module $\omega$. Assume that $R$ is a noetherian $S$-algebra and that $R$, viewed as an $S$-module, is maximal Cohen--Macaulay. Then, in $\mo(R)$, 
 $$
 \Gproj(R)= \leftindex^{\perp_1}\Thick(\proj(R)\cup \{R^\dagger\}). 
 $$
\end{theorem}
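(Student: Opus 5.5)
The plan is to prove the two inclusions separately. Throughout, write $\mathcal T\colonequals\Thick(\proj(R)\cup\{R^\dagger\})$ and $(-)^\dagger=\Hom_S(-,\omega)$. The inclusion $\Gproj(R)\subseteq{}^{\perp_1}\mathcal T$ should be routine; the reverse inclusion is where the work lies. I have not closed one step of the reverse inclusion, and I flag it below.

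\emph{Step 1: $\Gproj(R)\subseteq{}^{\perp_1}\mathcal T$.} By \Cref{thick subcategory}(2), $\Gproj(R)^{\perp_1}$ is thick, and it obviously contains $\proj(R)$. So it suffices to show $R^\dagger\in\Gproj(R)^{\perp_1}$.
\begin{enumerate}
\item Let $G\in\Gproj(R)$. It sits in an exact sequence $0\to G\to P_{-1}\to P_{-2}\to\cdots$ with $P_{-i}\in\proj(R)$. Each $P_{-i}$ is maximal Cohen--Macaulay over $S$ because $R$ is.
\item Localize at a prime $\mathfrak p$ of $S$. Then $G_{\mathfrak p}$ is a $\dim S_{\mathfrak p}$-th syzygy of maximal Cohen--Macaulay $S_{\mathfrak p}$-modules, so the depth lemma makes it maximal Cohen--Macaulay. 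Hence $G\in\MCM(S)$.
\item By \Cref{change of ring-iso} and \Cref{global version}(1), $\Ext^i_R(G,R^\dagger)\cong\Ext^i_S(G,\omega)=0$ for all $i>0$.
\end{enumerate}

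\emph{Step 2: setup for the reverse inclusion.} Let $M\in{}^{\perp_1}\mathcal T$. I want to apply the criterion \ref{complete resolution}, which requires two things:
\begin{enumerate}
\item[(a)] $\Ext^i_R(M,R)=0$ for all $i>0$;
\item[(b)] an exact coresolution of $M$ by finitely generated projectives, built from monic left $\proj(R)$-approximations.
\end{enumerate}
I would first aim for the stronger statement (a$'$): $\Ext^i_R(M,X)=0$ for all $i\geq1$ and all $X\in\mathcal T$. Taking $X=R^\dagger$ in (a$'$) and using \Cref{change of ring-iso} gives $\Ext^{>0}_S(M,\omega)=0$. Then \Cref{global version}(1) shows $M\in\MCM(S)$.

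\emph{Step 3: the monic approximation.} Assume $M\in\MCM(S)$. \Cref{monomorphism} gives a monomorphism $\iota\colon M\hookrightarrow(R^\dagger)^m$. Choose a projective cover sequence $0\to\Omega R^\dagger\to Q\to R^\dagger\to0$. Here $\Omega R^\dagger\in\mathcal T$, so $\Ext^1_R(M,\Omega R^\dagger)=0$. Hence $\iota$ lifts through $Q^m$, and therefore any left $\proj(R)$-approximation $f\colon M\to P$ (available by \ref{approximation}) is a monomorphism.

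Now put $M'=\operatorname{Coker}f$. I check that $M'\in{}^{\perp_1}\mathcal T$ as follows.
\begin{enumerate}
\item Fix $X\in\mathcal T$ and a map $g\colon M\to X$. Take $0\to\Omega X\to Q'\to X\to 0$ with $Q'$ projective.
\item Since $\Ext^1_R(M,\Omega X)=0$, the map $g$ lifts to $Q'$. Because $f$ is a $\proj(R)$-approximation, the lift factors through $f$, so $g$ factors through $f$.
\item Thus $\Hom(P,X)\to\Hom(M,X)$ is onto. The long exact sequence then gives $\Ext^1_R(M',X)=0$.
\end{enumerate}
Iterating with $M'$ in place of $M$ produces the coresolution required in (b). Then \ref{complete resolution} yields $M\in\Gproj(R)$.

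\emph{Main obstacle: Step 2(a$'$), passing from $\Ext^1$ to all $\Ext^i$ against $\mathcal T$.} This step is not yet closed. The plan has a circularity: the iteration in Step 3 needs $M'\in\MCM(S)$, but that also follows from (a$'$), which is the unproven step.

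The natural approach is to find, for each $X\in\mathcal T$, a short exact sequence $0\to X\to Y\to Z\to0$ inside $\mathcal T$ with $\Ext^{>0}_R(M,Y)=0$. One could then shift $\Ext^{i+1}_R(M,X)\cong\Ext^i_R(M,Z)$ and induct.
\begin{enumerate}
\item When $X\in\mathcal T\cap\MCM(S)$, \Cref{monomorphism} provides $0\to X\to(R^\dagger)^m\to C\to0$. The cokernel $C$ lies in $\mathcal T$ by thickness, and is again maximal Cohen--Macaulay over $S$. Every $X\in\mathcal T$ is reached from such objects via its syzygies, which lie in $\mathcal T$ and become maximal Cohen--Macaulay locally after $\dim S_{\mathfrak p}$ steps.
\item So everything reduces to the single vanishing $\Ext^{>0}_R(M,R^\dagger)=0$, equivalently $M\in\MCM(S)$. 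That is exactly the circularity noted above.
\end{enumerate}
To break the circularity I would argue prime by prime on $S$, inducting on $\dim S_{\mathfrak p}$. The idea is to play the sequences $0\to\Omega^jR^\dagger\to Q_j\to\Omega^{j-1}R^\dagger\to0$ against the sequences $0\to\Omega^jR^\dagger\to(R^\dagger)^{m}\to C_j\to0$, to show that the obstructions $\Ext^2_R(M,\Omega^{j}R^\dagger)$ must vanish.
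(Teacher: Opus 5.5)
There is a genuine gap, and it is the one you flag yourself. Nothing in the proposal shows that $M\in{}^{\perp_1}\mathcal T$ implies $M\in\MCM(S)$. The prime-by-prime induction you sketch names no mechanism that would force $\Ext^2_R(M,\Omega^jR^\dagger)$ to vanish. Without this step you have neither the monic approximation nor condition (a).

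The missing idea is a maximal Cohen--Macaulay approximation in the sense of Auslander--Buchweitz. Every $M\in\mo(R)$ sits in an exact sequence $0\to K\to E\to M\to 0$ with $E\in\MCM(S)$ and $K$ admitting a finite resolution by modules in $\add R^\dagger$, so $K\in\mathcal T$. Since $\Ext^1_R(M,K)=0$, the sequence splits. Hence $M$ is a direct summand of $E$, and $M\in\MCM(S)$. This is how the paper closes the step, importing the approximation from \cite[Theorem 1.2 (c)]{M}.

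If $d=\dim S<\infty$, you can build the approximation by hand.
\begin{enumerate}
\item By the depth lemma, $\Omega^d_RM\in\MCM(S)$, so it is its own approximation. Now descend along the sequences $0\to\Omega^{j+1}_RM\to P_j\to\Omega^j_RM\to0$.
\item Suppose you have $0\to K'\to E'\to\Omega^{j+1}_RM\to0$. Embed $E'\hookrightarrow(R^\dagger)^m$ as in \Cref{monomorphism}. The cokernel $C$ is the $\omega$-dual of a first syzygy of $\Hom_S(E',\omega)$, hence lies in $\MCM(S)$.
\item One pushout gives $0\to\Omega^{j+1}_RM\to Z\to C\to0$ and $0\to K'\to(R^\dagger)^m\to Z\to0$.
\item A second pushout, against $P_j$, gives $0\to Z\to E\to\Omega^j_RM\to0$ and $0\to P_j\to E\to C\to0$. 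So $E\in\MCM(S)$ and $Z$ has a finite $\add R^\dagger$-resolution.
\end{enumerate}

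Once this is in place, the rest of your plan coincides with the paper's proof.
\begin{enumerate}
\item Step 1 is correct. Your depth-lemma argument along the coresolution is slightly more economical than the paper's minimum-depth argument over the whole totally acyclic complex.
\item Step 3 is exactly the paper's approximation argument.
\item The circularity you worry about is not real. Each cokernel $M'$ lies in ${}^{\perp_1}\mathcal T$, so the same splitting argument gives $M'\in\MCM(S)$. More directly, $\Ext^1_R(M',R^\dagger)=0$ and $\Ext^i_R(M',R^\dagger)\cong\Ext^{i-1}_R(M,R^\dagger)=0$ for $i\ge2$.
\item You do not need all of (a$'$). For (a), apply \Cref{monomorphism} to $R$ itself to get $0\to R\to(R^\dagger)^{m_0}\to(R^\dagger)^{m_1}\to\cdots$ with all cokernels in $\mathcal T$. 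Then dimension-shift using $\Ext^{>0}_R(M,R^\dagger)=0$ and $\Ext^1_R(M,Y)=0$ for $Y\in\mathcal T$. This is the paper's argument, and it is also your own item 1 in the obstacle paragraph.
\end{enumerate}
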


\begin{proof}
Let $G\in \Gproj(R)$, and 
\[
\cdots \longrightarrow P_1 \xrightarrow{d_1} P_0 \xrightarrow{d_0} P_{-1} \longrightarrow \cdots
\]
the associating totally acyclic complex of finitely generated projective $R$-modules, where $G=\Ima d_0$. First, we show that $G$ is maximal Cohen--Macaulay as an $S$-module. By definition, it suffices to show that, for each prime ideal $\p$ of $S$ such that $G_\p\ne 0$, there is $\depth (G_\p)=\dim(S_\p)$. Since the localization functor is exact, there is a long exact sequence
\[
\cdots \longrightarrow {(P_1)}_\p \xrightarrow{\overline{d_1}} {(P_0)}_\p \xrightarrow{\overline{d_0}} {(P_{-1})}_\p \longrightarrow \cdots, 
\]
where $\Ima \overline{d_0}=G_\p$. By assumption, $R\in \MCM(S)$. It follows that all $P_i\in \MCM(S)$ and then all $(P_i)_\p\in \MCM(S_\p)$, $i\in \Z$. That is, $\depth ((P_i)_\p)=\dim (S_\p)$, $i\in \Z$. By \cite[3.1.24]{BH} and the fact that $S_\p$ admits a finite module $\omega_\p$ of finite injective dimension, for any $M\in \mo(S_\p)$ there is $\depth (M)\leq \depth (S_\p)$. And since $S_\p$ is a Cohen--Macaulay ring, $\depth (S_\p)=\dim(S_\p)$. Now consider $t\colonequals\min \{\depth (\Ima\overline{d_i}) \mid i\in \Z\}$ and suppose that $\depth (\Ima\overline{d_k})=t$. By \cite[Proposition 1.2.9]{BH}, 
\[
\begin{aligned}
    \depth (\Ima \overline{d_{k}})&\geq \min\{\depth ((P_{k-1})_\p), \depth(\Ima \overline{d_{k-1}})+1\}\\
    &=\min\{\dim (S_\p), \depth (\Ima \overline{d_{k-1}})+1\}.
\end{aligned}
\]
But by the choice of $k$, 
\[
\depth (\Ima \overline{d_{k}})< \depth (\Ima \overline{d_{k-1}})+1.
\]
Therefore, $\dim (S_\p)\leq \depth (\Ima \overline{d_{k}})\leq \depth (\Ima \overline{d_{0}})=\depth (G_\p)\leq \dim (S_\p)$, implying that $\depth (G_\p)= \dim (S_\p)$. 

\vskip5pt

By the above, $G$ is maximal Cohen--Macaulay as an $S$-module. It follows that $\Ext_S^i(G, \omega)=0$; see \Cref{global version} (1). Then, by \Cref{change of ring-iso}, 
$$\Ext_R^1(G, R^\dagger)=\Ext_S^1(G, \omega)=0.$$ 
It follows that $R^\dagger\in \Gproj(R)^{\perp_1}$. It is also known that $\proj(R)\in \Gproj(R)^{\perp_1}$. Thus, by \Cref{thick subcategory} (2), $\Thick(\proj(R)\cup\{ R^\dagger\})\subseteq \Gproj(R)^{\perp_1}$, which is equivalent to say that $\Gproj(R)\subseteq \leftindex^{\perp_1}{\Thick(\proj(R)\cup \{R^\dagger\})}$. 


\vskip5pt

Conversely, let $X\in \leftindex^{\perp_1}\Thick(\proj(R)\cup \{R^\dagger\})$. Suppose that $f_0\colon X\rightarrow Q_{-1}$ is a left $\proj(R)$-approximation, where $Q_{-1}\in\proj(R)$; see \Cref{approximation}. We claim that the morphism $f_0$ is a left $\Thick(\proj(R)\cup \{R^\dagger\})$-approximation. For any $g\colon X\rightarrow Y$ with $Y\in \Thick(\proj(R)\cup \{R^\dagger\})$, take a short exact sequence 
\[
\xymatrix{
0\ar[r] & Y'\ar[r] & Q\ar[r]^-{\pi} & Y\ar[r] & 0
}
\]
where $Q\in \proj(R)$. Note that $g$ factors through $\pi$ as $X\in \leftindex^{\perp_1}\Thick(\proj(R)\cup \{R^\dagger\})$ and $Y'\in \Thick(\proj(R)\cup \{R^\dagger\})$, say $g=\pi h$ for some morphism $h\colon X\rightarrow Q$. Since $f_0$ is a left $\proj(R)$-approximation, $h$ factors through $f_0$, and hence $g$ factors through $f_0$. Thus, $f_0$ is a left $\Thick(\proj(R)\cup \{R^\dagger\})$-approximation. 

\vskip5pt

Next, we show that $f_0$ is a monomorphism. By \cite[Theorem 1.2 (c)]{M}, there is a short exact sequence of $R$-modules
\[
\xymatrix{
0\ar[r] & K\ar[r] & M\ar[r]^-{\pi} & X\ar[r] & 0, 
}
\]
where $K\in \Thick(\proj(R)\cup R^\dagger)$, $M$ is maximal Cohen--Macaulay as an $S$-module. Since $X\in \leftindex^{\perp_1}\Thick(\proj(R)\cup \{R^\dagger\})$, this exact sequence splits and $X$ is a direct summand of $M$. In particular, $X$ is maximal Cohen--Macaulay as an $S$-module. By \Cref{monomorphism}, there is a monomorphism $t\colon X\rightarrow (R^\dagger)^n$ for some positive integer $n$. Then, $t$ factors through $f_0$ since $f_0$ is a left $\Thick(\proj(R)\cup \{R^\dagger\})$-approximation. It follows that $f_0$ is a monomorphism. 


\vskip5pt

By the above, $f_0$ is a left $\Thick(\proj(R)\cup \{R^\dagger\})$-approximation and is a monomorphism. Combining this with that $X\in \leftindex^{\perp_1} \Thick(\proj(R)\cup\{R^\dagger\})$, we conclude that $\Coker f_0\in \leftindex^{\perp_1}\Thick(\proj(R)\cup \{R^\dagger\})$. Then, by induction, there is an exact sequence 
\[
0 \rightarrow X\xrightarrow {f_0} Q_{-1} \xrightarrow {f_{-1}} Q_{-2}\xrightarrow{f_{-2}}\cdots\rightarrow Q_{-n}\rightarrow \cdots
\]
where all $Q_{-i}\in \proj(R)$ and the natural embedding $\Ima f_{-(i-1)}\rightarrow Q_{-i}$ is a left $\Thick(\proj(R)\cup \{R^\dagger\})$-approximation for each $i\geq 1$. In particular, the natural embedding $\Ima f_{-(i-1)}\rightarrow Q_{-i}$ is a left $\proj(R)$-approximation for each $i\geq 1$

\vskip5pt

In order to show that $X\in\Gproj(R)$, by \Cref{complete resolution}, it remains to show that $\Ext_R^i(X, R)=0$ for $i>0$. By \Cref{monomorphism}, there is an exact sequence of $R$-modules
\[
     0 \rightarrow R \rightarrow (R^\dagger)^{m_0} \xrightarrow{\phi_0} (R^\dagger)^{m_1} \xrightarrow{\phi_1} \cdots \xrightarrow{\phi_{n-1}} (R^\dagger)^{m_n} \xrightarrow{\phi_n} \cdots, 
\]
where  $m_i\geq 0$ and $\Ima \phi_i\in \Thick(\proj(R) \cup \{R^\dagger\})$ for each $i\geq 0$. Since $X\in \leftindex^{\perp_1}\Thick(\proj(R) \cup \{R^\dagger\})$, $\Ext_R^1(X, R)=0$ . As shown above, $X$ is maximal Cohen--Macaulay as an $S$-module. It follows that $\Ext_R^i(X, R^\dagger)=\Ext_S^i(X, \omega)=0$ for $i>0$ by \Cref{change of ring-iso} and \Cref{global version} (1). Thus, 
\[
\Ext_R^i(X, R)=\Ext_R^{i-1}(X, \Ima \phi_0)=\cdots=\Ext_R^1(X, \Ima\phi_{i-2})=0
\]
for $i\geq 2$. This finishes the proof.
\end{proof}

\begin{remark}\label{infty setting}
 Under the assumptions of \Cref{characterization}, we have
      $$
       \Gproj(R)=\leftindex^{\perp_\infty}\Thick(\proj(R)\cup \{R^\dagger\})= \leftindex^{\perp_1}\Thick(\proj(R)\cup \{R^\dagger\}). 
      $$
      Indeed, by \Cref{characterization}, we have the inclusion $$\Thick(\proj(R)\cup \{R^\dagger\})\subseteq \Gproj(R)^{\perp_1}=\Gproj(R)^{\perp_\infty},$$
      where the equality is by \Cref{thick subcategory}. This yields the first inclusion below
      $$
      \Gproj(R)\subseteq \leftindex^{\perp_\infty}\Thick(\proj(R)\cup \{R^\dagger\})\subseteq \leftindex^{\perp_1}\Thick(\proj(R)\cup \{R^\dagger\}).
      $$
      The desired statement now follows from \Cref{characterization}.
\end{remark}

Here are some natural examples of noetherian $S$-algebras $R$ that satisfy the assumptions in \Cref{characterization}. 

\begin{example}\label{Examples}
(1) Let $S$ be a Cohen--Macaulay ring that admits a canonical module. Then $S$ itself satisfies the assumptions in \Cref{characterization}; see \Cref{mcm}. In particular, a complete Cohen--Macaulay local ring satisfies the assumptions; see \cite[Corollary 3.3.8]{BH}. 

\vskip5pt

(2) Let $S$ be a commutative Artin ring, and $R$ a module finite $S$-algebra. Then $R$ satisfies the assumptions in \Cref{characterization}; see \Cref{duality}. 

\vskip5pt

(3) Let $S$ be a Cohen--Macaulay ring that admits a canonical module. Assume that $S \to R$ is a Frobenius extension in the sense of \cite[Theorem~1.2]{Kadison}. For instance, one may take $R=S\llbracket x\rrbracket/(x^2)$. Then $R$ satisfies the assumptions in \Cref{characterization}.

\vskip5pt

(4) Let $S$ be a Cohen--Macaulay ring that admits a canonical module and $M\in \MCM(S)$. Assume $R=S\ast M$ is the trivial extension of $S$ by $M$; see details in \cite[Theorem 3.3.6]{BH}. Then $R$ satisfies the assumptions in \Cref{characterization}.

\vskip5pt

(5) If $S$ is a commutative local Cohen--Macaulay ring of dimension $\le 2$ and $M\in \MCM(S)$, then $R:=\End_S(M)$ satisfies the assumptions in \Cref{characterization}; see \cite[Theorem 3.3.10]{BH}. 
\end{example}

\begin{corollary}\label{ortho of proj-fid}
     Keep the assumptions as in \Cref{characterization}. If, in addition, $\dim(S)<\infty$, then, in $\mo (R)$, we have 
    $$
 \Gproj(R)=\leftindex^{\perp_1}\Thick(\proj(R)\cup {\mathcal I}^{<\infty}(R)).
    $$
\end{corollary}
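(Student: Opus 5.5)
The plan is a sandwich argument. Write $\mathcal T_1=\Thick(\proj(R)\cup\{R^\dagger\})$ and $\mathcal T_2=\Thick(\proj(R)\cup {\mathcal I}^{<\infty}(R))$. By \Cref{fid contains} we have $\mathcal T_2\subseteq \Gproj(R)^{\perp_1}$, and therefore $\Gproj(R)\subseteq \leftindex^{\perp_1}\mathcal T_2$. For the reverse inclusion it suffices to show $\mathcal T_1\subseteq\mathcal T_2$. Taking left $\Ext^1$-orthogonals reverses inclusions, so this gives $\leftindex^{\perp_1}\mathcal T_2\subseteq \leftindex^{\perp_1}\mathcal T_1$. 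The latter class equals $\Gproj(R)$ by \Cref{characterization}, and the two inclusions together give the claimed equality.

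The proof therefore reduces to showing $R^\dagger\in{\mathcal I}^{<\infty}(R)$, that is, that $\id_R(R^\dagger)<\infty$; this is the only place where $\dim(S)<\infty$ is used. First, $R^\dagger=\Hom_S(R,\omega)$ is a finitely generated left $R$-module, since $R$ is a finitely generated $S$-module. Because $\dim(S)<\infty$, \Cref{def of canonical} gives $d\colonequals\id_S(\omega)<\infty$. We then show that $\Ext^i_R(M,R^\dagger)=0$ for all $M\in\mo(R)$ and all $i>d$. By \Cref{change of ring-iso}, $\Ext^i_R(M,R^\dagger)\cong\Ext^i_S(M,\omega)$, and the right-hand side vanishes for $i>d$.

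It remains to pass from vanishing against finitely generated modules to finite injective dimension. Since $R$ is left noetherian, Baer's criterion applies: injective dimension over $R$ can be tested by $\Ext^i_R(R/I,-)$ with $I$ ranging over left ideals. Each $R/I$ lies in $\mo(R)$, so $\id_R(R^\dagger)\le d$. Alternatively, one can avoid Baer's criterion by using the adjunction $\Hom_R(-,\Hom_S(R,E))\cong\Hom_S(-,E)$: applying $\Hom_S(R,-)$ to a finite injective resolution of $\omega$ over $S$ yields an injective resolution of $R^\dagger$ over $R$, and this is exact because $\Ext^{>0}_S(R,\omega)=0$ when $R\in\MCM(S)$, by \Cref{global version}(1).

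The main (mild) obstacle is exactly this step, namely verifying that the isomorphism $\Ext_R\cong\Ext_S$ of \Cref{change of ring-iso} really bounds the injective dimension of $R^\dagger$. Once $R^\dagger\in{\mathcal I}^{<\infty}(R)$, the inclusion $\mathcal T_1\subseteq\mathcal T_2$ is immediate from minimality of thick closures, and the corollary follows.
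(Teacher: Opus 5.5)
Your proposal is correct and is essentially the paper's proof: the paper also deduces $\id_R(R^\dagger)<\infty$ from $\id_S(\omega)<\infty$ via the argument of \Cref{change of ring-iso}, obtains $\leftindex^{\perp_1}\Thick(\proj(R)\cup{\mathcal I}^{<\infty}(R))\subseteq\leftindex^{\perp_1}\Thick(\proj(R)\cup\{R^\dagger\})=\Gproj(R)$ from \Cref{characterization}, and closes the sandwich with \Cref{fid contains}. Your two justifications of $\id_R(R^\dagger)<\infty$ (Baer's criterion applied to the cyclic modules $R/I$, or applying $\Hom_S(R,-)$ to a finite injective resolution of $\omega$) simply spell out the step the paper leaves implicit.
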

\begin{proof}
    Assume $\dim(S)<\infty$. By \Cref{def of canonical}, $\id_S(\omega)<\infty$. It follows from the argument of \Cref{change of ring-iso} that $\id_R(R^\dagger)<\infty$. This yields the following inclusion
    $$
    \leftindex^{\perp_1}\Thick(\proj(R)\cup {\mathcal I}^{<\infty}(R))\subseteq \leftindex^{\perp_1}\Thick(\proj(R)\cup \{R^\dagger\})=\Gproj(R),
    $$
    where the equality is by \Cref{characterization}. The desired result now follows by combining with \Cref{fid contains}.
\end{proof}

\begin{corollary}\label{cotorsion pair}
    Under the assumptions of \Cref{characterization},  there is a hereditary cotorsion pair $(\Gproj(R),\Gproj(R)^{{\perp}_1})$  in $\mo(R)$.
\end{corollary}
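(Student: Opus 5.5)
We need $\Gproj(R)={}^{\perp_1}(\Gproj(R)^{\perp_1})$; the other equality $\Gproj(R)^{\perp_1}=\mathcal Y$ holds by definition once we set $\mathcal Y\colonequals\Gproj(R)^{\perp_1}$. Hereditary then follows from $\Gproj(R)$ being resolving, as recalled in \Cref{def-cotorsion}, since $\mo(R)$ has enough projectives. Alternatively, $\Gproj(R)^{\perp_1}=\Gproj(R)^{\perp_\infty}$ by \Cref{thick subcategory} (1), and this also gives the hereditary property directly.

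For the main equality, the inclusion $\Gproj(R)\subseteq {}^{\perp_1}(\Gproj(R)^{\perp_1})$ is tautological. For the reverse inclusion I will use \Cref{characterization}. The proof of that theorem showed that $R^\dagger\in\Gproj(R)^{\perp_1}$ and $\proj(R)\subseteq \Gproj(R)^{\perp_1}$. Since $\Gproj(R)^{\perp_1}$ is thick by \Cref{thick subcategory} (2), we get
$$\Thick(\proj(R)\cup\{R^\dagger\})\subseteq \Gproj(R)^{\perp_1}.$$
Taking left $\Ext^1$-orthogonals reverses the inclusion, which gives
$${}^{\perp_1}(\Gproj(R)^{\perp_1})\subseteq {}^{\perp_1}\Thick(\proj(R)\cup\{R^\dagger\})=\Gproj(R),$$
where the last equality is \Cref{characterization}. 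This completes the argument.

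There is no real obstacle here: all the substantive work (monic left approximations and Cohen--Macaulayness over $S$) is already contained in \Cref{characterization}. The only point needing care is the hereditary claim, which rests on $\Gproj(R)$ being resolving (closed under extensions, kernels of epimorphisms and summands). This is standard for finitely generated Gorenstein projectives over a noetherian ring and was already used in \Cref{thick subcategory}.
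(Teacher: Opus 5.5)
Your proposal is correct and is essentially the paper's proof. The paper sets $\mathcal T=\Thick(\proj(R)\cup\{R^\dagger\})$ and writes ${}^{\perp_1}(\Gproj(R)^{\perp_1})={}^{\perp_1}\bigl(({}^{\perp_1}\mathcal T)^{\perp_1}\bigr)={}^{\perp_1}\mathcal T=\Gproj(R)$; that is exactly your inclusion $\mathcal T\subseteq\Gproj(R)^{\perp_1}$ followed by reversing orthogonals. Your heredity argument, via $\Gproj(R)$ being resolving, is also the paper's, which notes closure under kernels of epimorphisms and that $\Gproj(R)^{\perp_1}$ is closed under cokernels of monomorphisms.
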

\begin{proof}
    By \Cref{characterization}, $\Gproj(R)=\leftindex^{\perp_1}\Thick(\proj(R)\cup \{R^\dagger\})$. Thus, 
    \[
    \begin{aligned}
    \leftindex^{\perp_1}{(\Gproj(R)^{\perp_1})}&= \leftindex^{\perp_1}{((\leftindex^{\perp_1} {\Thick(\proj(R)\cup \{R^\dagger\})})^{\perp_1})}\\
    &= \leftindex^{\perp_1}\Thick(\proj(R)\cup \{R^\dagger\}) \\
    &=  \Gproj(R).
     \end{aligned}
    \] 
    It follows that $(\Gproj(R),\Gproj(R)^{{\perp}_1})$ is a cotorsion pair in $\mo(R)$. 

\vskip5pt

    For the heredity, it is known that $\Gproj(R)$ is closed under the kernel of epimorphism, and $\Gproj(R)^{{\perp}_1}$ is closed under the cokernel of monomorphism by \Cref{thick subcategory} (2). 
\end{proof}

\begin{remark}\label{big version-cotorsion}
  (1)  The cotorsion pair $(\Gproj(R),\Gproj(R)^{{\perp}_1})$ may fail to be complete. Indeed, $\Gproj(R)$ may fail to be contravariantly finite in $\mo(R)$. Y. Yoshino (\cite[Theorem 6.1]{Yoshino2003}) showed that there is a class of Artin algebras $\Lambda$ such that $\Gproj(\Lambda)$ is not contravariantly finite in $\mo(\Lambda)$; and a precise example was given by A. Beligiannis and H. Krause in \cite[Proposition 4.3]{BK}. 

\vskip5pt
It is worth recalling that for a commutative noetherian henselian local ring, L.W. Christensen,
G. Piepmeyer, J. Striuli, and R. Takahashi \cite[(2.2) and Theorem~C]{CPST} proved the following remarkable
result: if $\Gproj(R)$ is contravariantly finite in
$\mo(R)$, then $R$ is either Gorenstein or $\Gproj(R)=\proj(R)$.



\vskip5pt
(2) Let $R$ be an arbitrary unital ring. M. Cortés-Izurdiaga and J. Šaroch \cite[Corollary 3.4 (1)]{CIS} observed that the pair \((\GProj(R), \GProj(R)^{\perp_1})\) forms a hereditary cotorsion pair in \(\Mod(R)\). The corresponding statement for Artin algebras was established earlier by A. Beligiannis and I. Reiten \cite[Chapter X, Theorem 2.4]{Beligiannis-Reiten}. 
\Cref{cotorsion pair} can be viewed as an analogue of this result in the category of finitely generated modules.
\vskip5pt
\end{remark}

The following result is analogous to \Cref{characterization}. It strengthens the result mentioned in \Cref{big version-cotorsion} (2).
\begin{proposition}
    Let $R$ be an arbitrary unital ring and $\Thick ^\Pi(\Proj(R) \cup \Inj(R))$ be the smallest thick subcategory of $\Mod(R)$ containing all the direct products of projective $R$-modules and injective $R$-modules.  Then $$\GProj(R)= \leftindex^{\perp_1}\Thick ^\Pi(\Proj(R) \cup \Inj(R)).$$ 
\end{proposition}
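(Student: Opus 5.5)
The plan is to prove the two inclusions separately. The inclusion $\subseteq$ rests on the hereditary cotorsion pair of Cortés-Izurdiaga--Šaroch from \Cref{big version-cotorsion} (2). The inclusion $\supseteq$ builds a complete resolution by hand, imitating the proof of \Cref{characterization}. I write $\mathcal T\colonequals\Thick^\Pi(\Proj(R)\cup\Inj(R))$.

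For $\GProj(R)\subseteq \leftindex^{\perp_1}\mathcal T$, it suffices to show $\mathcal T\subseteq \GProj(R)^{\perp_1}$. I first show that $\GProj(R)^{\perp_1}$ is thick in $\Mod(R)$.
\begin{enumerate}
\item Since $\GProj(R)$ is resolving, $\GProj(R)^{\perp_1}=\GProj(R)^{\perp_\infty}$. This gives closure under extensions, direct summands and cokernels of monomorphisms.
\item For closure under kernels of epimorphisms, take $0\to A\to B\to C\to 0$ with $B,C\in\GProj(R)^{\perp_\infty}$ and $G\in\GProj(R)$. Choose $0\to G\to P\to G'\to 0$ with $P$ projective and $G'$ Gorenstein projective. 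Then $\Ext^1_R(G,A)\cong \Ext^2_R(G',A)$, which is squeezed between $\Ext^1_R(G',C)=0$ and $\Ext^2_R(G',B)=0$.
\end{enumerate}
Next, $\GProj(R)^{\perp_1}$ contains every injective module, and it contains every product of projectives because $\Ext^1_R(G,\prod P_i)\cong\prod \Ext^1_R(G,P_i)=0$. Hence $\mathcal T\subseteq\GProj(R)^{\perp_1}$, and this inclusion is routine.

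For the converse, let $X\in\leftindex^{\perp_1}\mathcal T$. The first step is to upgrade to $X\in \leftindex^{\perp_\infty}\mathcal T$. For $Y\in\mathcal T$, an injective coresolution of $Y$ has all its cosyzygies in $\mathcal T$, since $\mathcal T$ contains $\Inj(R)$ and is closed under cokernels of monomorphisms. Dimension shifting then gives $\Ext^i_R(X,Y)\cong\Ext^1_R(X,\Omega^{-(i-1)}Y)=0$. In particular $\Ext^{\geq1}_R(X,P)=0$ for all projectives $P$.

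Then I construct a monomorphism $X\to Q$ with $Q$ projective, using the same lifting trick as in \Cref{characterization}.
\begin{enumerate}
\item For any $Z\in\mathcal T$, choose an epimorphism $\pi\colon F\to Z$ with $F$ free. Its kernel lies in $\mathcal T$, so $\Ext^1_R(X,\ker\pi)=0$ and every map $X\to Z$ lifts through $\pi$.
\item Apply this with $Z=E\times R^{\Hom_R(X,R)}$, where $X\hookrightarrow E$ is an injective envelope and $X\to R^{\Hom_R(X,R)}$ is the canonical map. Both factors lie in $\mathcal T$, so the product map lifts to $f\colon X\to Q$ with $Q$ free.
\item The map $f$ is a monomorphism, because its composite with $Q\to E$ is the embedding $X\hookrightarrow E$.
\item Every $g\colon X\to R$ factors through $f$, so $\Hom_R(Q,R^J)\to\Hom_R(X,R^J)$ is onto for every set $J$.
\end{enumerate}
Put $C=\Coker f$. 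The goal is $C\in\leftindex^{\perp_1}\mathcal T$, so that the construction can be iterated. Splicing the resulting coresolution with a projective resolution of $X$ then gives an acyclic complex of projectives. It is $\Hom_R(-,\Proj(R))$-exact, because every term of the coresolution lies in $\leftindex^{\perp_\infty}\Proj(R)$. This would give $X\in\GProj(R)$.

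The main obstacle is proving $C\in\leftindex^{\perp_1}\mathcal T$, that is, $\Ext^1_R(C,Y)=0$ for all $Y\in\mathcal T$. By item 1 above and the long exact sequence for $0\to X\to Q\to C\to 0$, this reduces to showing that every map $X\to P$ with $P$ projective extends along $f$. For $P$ a product $R^J$ this is exactly item 4, so $\Ext^1_R(C,R^J)=0$. The difficulty is to pass from products $R^J$ to coproducts $R^{(J)}$. I would first try the exact sequence $0\to R^{(J)}\to R^J\to R^J/R^{(J)}\to 0$ together with $\Ext^{\geq1}_R(X,-)$-vanishing. If that fails, I would enlarge $Q$ by lifting all maps $X\to R^{(J)}$ for $J$ of cardinality at most $|X|$. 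This is a set, and any map $X\to R^{(J)}$ has image inside $R^{(J')}$ for such a $J'$, so one can then arrange that $f$ is a left $\Proj(R)$-approximation directly. In that case $C\in\leftindex^{\perp_1}\mathcal T$ follows exactly as in \Cref{characterization}, and the induction runs.
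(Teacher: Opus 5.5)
Your architecture is the same as the paper's. You show $\mathcal T\subseteq \GProj(R)^{\perp_1}$ for one inclusion. For $X\in{}^{\perp_1}\mathcal T$ you build a monic left $\mathcal T$-approximation $X\to Q$ into a projective whose cokernel is again in ${}^{\perp_1}\mathcal T$, then iterate and splice as in \Cref{characterization}. The paper obtains this approximation sequence by citing \cite[Lemma 3.1]{CIS}. Your second option proves it by hand, and that is the argument you should commit to.

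Make it explicit as follows. Put $\kappa=|X|\cdot\aleph_0$; when $X$ is finite, $|X|$ alone does not bound the supports. Then take
\[
Z=E\times \prod_{g\in\Hom_R(X,R^{(\kappa)})} R^{(\kappa)} .
\]
This $Z$ lies in $\mathcal T$ precisely because it is a product of projectives; this is where the $\Pi$ in $\Thick^\Pi$ is used. Lifting through a free module as in your item 1 gives a monomorphism $f\colon X\to Q$ through which every map $X\to R^{(\kappa)}$ factors. Hence every map from $X$ to a projective also factors through $f$, since its image lands in a free summand of rank at most $\kappa$. So $f$ is a left $\Proj(R)$-approximation, hence by item 1 a left $\mathcal T$-approximation, and $\Ext^1_R(\Coker f,\mathcal T)=0$.

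The remaining steps, namely the upgrade to ${}^{\perp_\infty}\mathcal T$ and the splicing, are fine. Your first inclusion is also fine, and it does not actually use the Cort\'es-Izurdiaga--\v{S}aroch cotorsion pair. Only the trivial inclusion $\GProj(R)\subseteq{}^{\perp_1}(\GProj(R)^{\perp_1})$ is needed; the paper writes the equality but uses only this inclusion. Your thickness check for $\GProj(R)^{\perp_1}$ in $\Mod(R)$ supplies a step the paper leaves implicit.

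Drop the first option, though, because it is circular. After extending $\iota g$ to $h\colon Q\to R^J$, you would have to push $h$ into $R^{(J)}$. That requires lifting the induced $\bar h\colon C\to R^J/R^{(J)}$ along $R^J\to R^J/R^{(J)}$. Since $\Ext^1_R(C,R^J)=0$, surjectivity of $\Hom_R(C,R^J)\to\Hom_R(C,R^J/R^{(J)})$ is equivalent to $\Ext^1_R(C,R^{(J)})=0$, which is exactly the goal.

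In fact items 1--4 alone cannot suffice in ZFC. Take $R=\mathbb{Z}$ and a non-free Whitehead group $W$; such groups exist consistently, by Shelah. Let $0\to X\xrightarrow{f_0}F\to W\to 0$ be a free presentation. Since $\Ext^1_{\mathbb{Z}}(W,E)=0=\Ext^1_{\mathbb{Z}}(W,\mathbb{Z}^{I})$, your item-2 map $c\colon X\to E\times\mathbb{Z}^{\Hom(X,\mathbb{Z})}$ equals $\gamma f_0$ for some $\gamma$. Choose $\pi=(\gamma,\pi')\colon F\oplus F'\to E\times\mathbb{Z}^{\Hom(X,\mathbb{Z})}$ with $\pi'$ any epimorphism from a free module. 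Then the lift $f=(f_0,0)$ satisfies items 1--4. Yet $\Coker f\cong W\oplus F'$ has nonzero $\Ext^1$ into a free module, so it is not in ${}^{\perp_1}\mathcal T$.
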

\begin{proof}
    Since $\Thick ^\Pi(\Proj(R) \cup \Inj(R))\subseteq \GProj(R)^{\perp_1}$, we have $$\GProj(R)=\leftindex^{\perp_1}(\GProj(R)^{\perp_1})\subseteq \leftindex^{\perp_1}\Thick ^\Pi(\Proj(R) \cup \Inj(R)).$$  Conversely, let $X\in \leftindex^{\perp_1}\Thick ^\Pi(\Proj(R) \cup \Inj(R))$. By \cite[Lemma 3.1]{CIS}, there is a short exact sequence
       $$  
        \xymatrix{
        0\ar[r] & X\ar[r]^-{f} & P_1\ar[r] & X_1\ar[r] & 0,
        }
       $$
        where $P_1\in \Proj(R)$, $X_1\in \leftindex ^{\perp_1}\Thick ^\Pi(\Proj(R) \cup \Inj(R))$, and the map $f$ is a left $\Thick ^\Pi(\Proj(R) \cup \Inj(R))$-approximation. Then a similar argument as in the proof of \Cref{characterization} will yield that $X$ is Gorenstein projective. 
\end{proof}

Next, we focus on proving that for maximal Cohen--Macaulay algebras $R$ over  Cohen--Macaulay local rings, $(\Gproj(R), \Gproj(R)^{\perp_1})$ is a hereditary cotorsion pair in $\mo (R)$. 

\begin{lemma}\label{extorthmcm} 
Let $(S,\m, k)$ be a Cohen--Macaulay local ring. Assume that $R$ is a noetherian $S$-algebra and that $R$, viewed as an $S$-module, is maximal Cohen--Macaulay.
For a module $M\in \mo(R)$, if $\Ext_R^1(M,N)=0$ for every $N\in\mo(R)$ with finite injective dimension, then $M\in\MCM(S)$. 
\end{lemma}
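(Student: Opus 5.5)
The plan is to detect the depth of $M$ over $S$ through a single Koszul homology group. We will test the hypothesis against one finitely generated $R$-module of finite injective dimension, built from a maximal $S$-regular sequence and the Matlis dual over $S$.

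\textbf{Step 1: the test module.} Let $d=\dim(S)$ and choose a maximal $S$-regular sequence $\x=x_1,\dots,x_d$ in $\m$. Since $R\in\MCM(S)$ and $S$ is Cohen--Macaulay, $\x$ is also $R$-regular, viewing $R$ as an $S$-module. The $x_i$ act centrally, so $R/\x R$ is an $R$-bimodule. Let $E$ be the injective hull of $k$ over $S$, and put
\[
N\colonequals\Hom_S(R/\x R,E).
\]
Since $R/\x R$ has finite length over $S$, so does $N$; hence $N\in\mo(R)$.

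\textbf{Step 2: $N$ has finite injective dimension over $R$.} For any $L\in\Mod(R)$, adjunction gives $\Hom_R(L,N)\cong\Hom_S(R/\x R\otimes_R L,E)$. Since $E$ is $S$-injective, this upgrades to
\[
\Ext^i_R(L,N)\cong\Hom_S(\Tor_i^R(R/\x R,L),E).
\]
Because $\x$ is a central $R$-regular sequence, the Koszul complex $K(\x;R)$ is a free resolution of $R/\x R$. Therefore
\[
\Tor_i^R(R/\x R,L)\cong \h_i(\x;L),
\]
and this vanishes for $i>d$. Hence $\id_R(N)\le d<\infty$. This identification, including checking the bimodule and side conventions, is the one place needing care, and I expect it to be the main technical point.

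\textbf{Step 3: conclude.} By hypothesis, $0=\Ext^1_R(M,N)\cong\Hom_S(\h_1(\x;M),E)$. Since Matlis duality is faithful, $\h_1(\x;M)=0$. Now $M$ is a finitely generated $S$-module over the local ring $S$ and $\x\subseteq\m$. By the rigidity of Koszul homology \cite[Theorem 1.6.16/Corollary 1.6.19]{BH}, this forces $\x$ to be an $M$-regular sequence (or $M=0$). Then $\depth(M)\ge d=\dim(S)$, so $M\in\MCM(S)$.

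Note that only $\Ext^1$ is needed here, so no dimension shifting is required. This matters because cosyzygies of $N$ would leave $\mo(R)$.
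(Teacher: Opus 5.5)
Your proof is correct and follows the paper's argument. You use the same test module $N=\Hom_S(R/\x R,E)$, get finite injective dimension from the Koszul resolution of $R/\x R$, reduce $\Ext^1_R(M,N)=0$ to $\h_1(\x;M)=0$, and conclude with \cite[Corollary 1.6.19]{BH}. The only cosmetic difference is that you use the adjunction $\Ext^i_R(L,N)\cong\Hom_S(\Tor_i^R(R/\x R,L),E)$ throughout, whereas the paper dualizes the Koszul resolution into an explicit bounded injective resolution and passes from $\Ext$ to $\Tor$ via $\Hom_S(N,E)\cong R/\x R$ and \cite[Remark 4.7]{cff}.
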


\begin{proof} 
Assume $\dim(S)=d$. Choose a maximal $S$-regular sequence $x_1,...,x_d\in \m$. Let $E_S(S/\m)$ denote the injective envelope of the module $S/\m$ over $S$. Set $N:=\Hom_S(R_R/(x_1,...,x_d)R_R, E_S(S/\m))$. Note that $N$ has finite length over $S$, and hence $N\in\mo(R)$.  Moreover, $N$ has finite injective dimension over $R$. Indeed, let $K$ denote the Koszul complex on the sequence $x_1,\ldots,x_d$ over $S$. Note that $x_1,\ldots,x_d$ is also $R$-regular as $R\in\MCM(S)$. Hence,
there is a quasi-isomorphism
$$K\otimes_S R\xrightarrow \simeq R_R/(x_1,...,x_d)R_R;$$
see \cite[Corollary 1.6.14]{BH}. Applying the exact functor $\Hom_S(-,E_S(S/\m))$ on this quasi-isomorphism, we could get a bounded injective resolution of $N$ over $R$. 

\vskip5pt

By the above and the hypothesis, $\Ext^1_R(M, N)=0$. Combining this with the isomorphism $\Hom_S(N, E_S(S/\m))\cong R_R/(x_1,...,x_d)R_R$ (see \cite[Proposition 3.2.12]{BH}), we have $\Tor^R_1(R_R/(x_1,...,x_d)R_R, M)=0 $; see, for instance, \cite[Remark 4.7]{cff}. Note that
\begin{align*}
    \Tor^R_1(R_R/(x_1,...,x_d)R_R, M)& \cong \h_1((K\otimes_S R)\otimes_R M)\\
    & \cong \h_1(K\otimes_S M).
\end{align*}
Thus, $\h_1(K\otimes_S M)=0$.
By \cite[Corollary 1.6.19]{BH}, $x_1,...,x_d$ is $M$-regular.   Hence, $M\in\MCM(S)$. 
\end{proof}
The following lemmas are used in the proof of \Cref{takanswer1} or \Cref{Tor-pair}. All of them follow from the same arguments as in the commutative case; see \cite[Lemma~1.4.4]{Christensen}, \cite[Lemma 2 (ii) in Section 18]{matsumura} and  \cite[Lemma 2 (iii) in Section 18]{matsumura}. 
\begin{lemma}\label{reduction}\label{reduction2}
    Let $(S,\m, k)$ be a commutative local ring and $R$ be a noetherian $S$-algebra. Assume that $M\in\mo(R)$ and $x\in \m$ is $R$-regular. 
    
    \vskip5pt
    
    (1) If $x$ is also $M$-regular, then $M\in \Gproj(R)$ if and only if $M/xM\in \Gproj(R/xR)$. 
    
    \vskip5pt
    
    (2) If $x$ is also $M$-regular, then $\Ext_R^1(M, N)\cong\Ext_{R/xR}^1(M/xM, N)$ for each $N\in \mo(R)$ with $xN=0$. 

    \vskip5pt

    (3) If $xM=0$, then $\Tor_1^R(N, M)\cong\Tor_1^{R/xR}(N/xN, M)$ for each $N\in \mo(R^{\rm op})$ such that $x$ is $N$-regular. 
\end{lemma}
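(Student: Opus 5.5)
Each of the three parts is the noncommutative form of a standard fact about reduction modulo a regular element, and the plan is to run the commutative arguments over $R$. Since $x$ comes from $S$ and the image of $S$ is central in $R$, the ideal $xR$ is two-sided and $\bar R=R/xR$ is a noetherian $S/xS$-algebra. Left multiplication by $x$ is therefore an $R$-linear endomorphism of every module. The basic tool for all three parts is the exact sequence $0\to R\xrightarrow{x}R\to\bar R\to0$, which holds because $x$ is $R$-regular. Tensoring it with $M$ gives $\Tor_i^R(\bar R,M)=0$ for $i\ge1$ whenever $x$ is $M$-regular. Consequently, if $P_\bullet\to M$ is a resolution of $M$ by finitely generated projective $R$-modules, then $P_\bullet/xP_\bullet\to M/xM$ is a resolution of $M/xM$ by finitely generated projective $\bar R$-modules.

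For (2), I would apply $\Hom_R(-,N)$ to $P_\bullet$. Since $xN=0$, there is a natural isomorphism $\Hom_R(P,N)\cong\Hom_{\bar R}(P/xP,N)$, so the two Hom-complexes are isomorphic and
\[
\Ext^i_R(M,N)\cong\Ext^i_{\bar R}(M/xM,N)
\]
for all $i$, in particular for $i=1$. Part (3) is symmetric. Take a projective resolution $F_\bullet$ of $N$ in $\mo(R^{\rm op})$. Since $x$ is $N$-regular, $F_\bullet/xF_\bullet$ resolves $N/xN$ over $\bar R^{\rm op}$. Because $xM=0$, there is a natural isomorphism $F\otimes_RM\cong (F/xF)\otimes_{\bar R}M$, and taking homology gives the isomorphism of $\Tor_1$.

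For (1), the forward direction: let $\mathbf P$ be a totally acyclic complex of finitely generated projective $R$-modules with $M$ as an image. Every image in $\mathbf P$ is Gorenstein projective, and $x$ is regular on each of them because each embeds in a projective module, hence in some $R^n$, on which $x$ is regular. Thus $\mathbf P\otimes_R\bar R$ is acyclic, by the long exact $\Tor$ sequences for the short exact pieces. Its terms are finitely generated projective $\bar R$-modules, and $M/xM$ is one of its images. For $Q$ a finitely generated projective $\bar R$-module, $\Hom_{\bar R}(\mathbf P/x\mathbf P,Q)\cong\Hom_R(\mathbf P,Q)$. By (2) applied to each image $G$ of $\mathbf P$, this reduces to $\Ext^{i}_R(G,Q)=0$, which follows from $\Ext^i_R(G,R)=0$ and the sequence $0\to R\to R\to\bar R\to0$, since $Q$ is a summand of a power of $\bar R$. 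In the finitely generated setting, Hom-exactness into $\bar R$ suffices, so $M/xM\in\Gproj(\bar R)$.

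The converse in (1) is the main obstacle. I would follow \cite[Lemma~1.4.4]{Christensen}. First, show $\Ext^i_R(M,R)=0$ for $i\ge1$. By the isomorphism from (2), $\Ext^i_R(M,\bar R)\cong\Ext^i_{\bar R}(M/xM,\bar R)=0$. The long exact sequence for $0\to R\xrightarrow{x}R\to\bar R\to0$ then shows that multiplication by $x$ is surjective on $\Ext^i_R(M,R)$. This module is finitely generated over $S$, so Nakayama's lemma gives $\Ext^i_R(M,R)=0$. The same argument, using $\Ext^i_{\bar R}(\Tr(M/xM),\bar R)=0$ together with $\Tr_{\bar R}(M/xM)\cong\Tr_R(M)/x\Tr_R(M)$ for a suitable presentation, gives the vanishing of $\Ext^{i}_{R^{\rm op}}(\Tr M,R)$ for $i\ge1$. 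That vanishing is equivalent to reflexivity of $M$ together with $\Ext^{i}_{R^{\rm op}}(M^*,R)=0$. For the transpose step one needs $x$ to be regular on $\Tr_R(M)$ (or on $M^*$), and this is where care is required. I would handle it by first proving $\Ext^1_R(M,R)=0$ as above. Then applying $(-)^*$ to $0\to M\xrightarrow{x}M\to M/xM\to0$ gives $M^*/xM^*\cong\Hom_R(M,\bar R)\cong (M/xM)^{*_{\bar R}}$, so the dual side reduces in the same way. Finally, the standard characterization of $\Gproj$ by Ext-vanishing of $M$ and $M^*$ plus reflexivity finishes the converse.
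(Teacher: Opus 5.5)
Your proposal is correct and follows the same approach as the paper, which gives no proof beyond saying that the commutative arguments carry over (Matsumura, Section 18, Lemma 2 (ii) and (iii) for parts (2) and (3); Christensen, Lemma 1.4.4 for part (1)); you transplant exactly those arguments, using that $x$ is central. The one step to state explicitly is in the converse of (1): the $x$-torsion of $\mathrm{Tr}_R M$ is the cokernel of the natural injection $M^*/xM^*\to (M/xM)^*$, i.e. $(0:_{\Ext^1_R(M,R)}x)$, so once $\Ext^1_R(M,R)=0$ the element $x$ is $\mathrm{Tr}_R M$-regular, and your Nakayama argument on $\mathrm{Tr}_R M$ then gives the reflexivity of $M$ as well as the vanishing of $\Ext^{i}_{R^{\rm op}}(M^*,R)$.
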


\begin{lemma}\label{regperp1}
Let $(S,\m, k)$ be a Cohen--Macaulay local ring and $x\in \m$ be an $S$-regular element. Assume that $R$ is a noetherian $S$-algebra and that $R$, viewed as an $S$-module, is maximal Cohen--Macaulay.
If $M\in \leftindex^{\perp_1}(\Gproj(R)^{\perp_1})$, then $M\in\MCM(S)$ and $M/xM\in \leftindex^{\perp_1}(\Gproj(R/xR)^{\perp_1})$. 
\end{lemma}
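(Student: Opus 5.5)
First, $M\in\MCM(S)$: by \Cref{fid contains}, ${\mathcal I}^{<\infty}(R)\subseteq \Gproj(R)^{\perp_1}$. Hence $M\in\leftindex^{\perp_1}(\Gproj(R)^{\perp_1})$ gives $\Ext^1_R(M,N)=0$ for every $N\in\mo(R)$ of finite injective dimension. \Cref{extorthmcm} then yields $M\in\MCM(S)$. In particular $x$, being $S$-regular, is $M$-regular (a nonzero MCM module over a CM local ring has every $S$-regular element as a regular element). It is also $R$-regular, since $R\in\MCM(S)$. Moreover $\bar R:=R/xR$ is a noetherian $\bar S:=S/xS$-algebra that is MCM over the CM local ring $\bar S$, so the same setup holds after reduction.

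Next, let $\bar N\in \Gproj(\bar R)^{\perp_1}$ in $\mo(\bar R)$; we must show $\Ext^1_{\bar R}(M/xM,\bar N)=0$. By \Cref{reduction2}(2), since $x$ is $M$-regular and $x\bar N=0$, we have $\Ext^1_{\bar R}(M/xM,\bar N)\cong\Ext^1_R(M,\bar N)$, with $\bar N$ regarded as an $R$-module. So it suffices to prove $\bar N\in\Gproj(R)^{\perp_1}$. Take $G\in\Gproj(R)$. By the argument in the proof of \Cref{characterization} (depth along the localized totally acyclic complex), $G$ is MCM over $S$, so $x$ is $G$-regular. \Cref{reduction}(1) gives $G/xG\in\Gproj(\bar R)$, and \Cref{reduction2}(2) gives $\Ext^1_R(G,\bar N)\cong\Ext^1_{\bar R}(G/xG,\bar N)=0$. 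Thus $\bar N\in\Gproj(R)^{\perp_1}$, and so $\Ext^1_R(M,\bar N)=0$.

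The main obstacle is justifying that every $G\in\Gproj(R)$ is MCM over $S$ when $S$ is local but possibly has no canonical module. The depth argument in \Cref{characterization} used only that $S_\p$ is CM and that depths of finite modules are bounded by $\dim S_\p$. That bound holds for any finitely generated module over a local ring (depth $\le$ dim of support $\le \dim S_\p$), so I would rerun that argument verbatim without the canonical module. Alternatively, one could pass to the completion $\widehat S$, which has a canonical module. With either approach the two reductions compose, giving $M/xM\in\leftindex^{\perp_1}(\Gproj(\bar R)^{\perp_1})$.
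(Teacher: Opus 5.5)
Your proposal is correct and follows the paper's proof essentially step for step. You use \Cref{fid contains} together with \Cref{extorthmcm} to get $M\in\MCM(S)$, show $\Gproj(R/xR)^{\perp_1}\subseteq\Gproj(R)^{\perp_1}$ via \Cref{reduction} (1) and (2) applied to $G\in\Gproj(R)$, and finish with \Cref{reduction2} (2) applied to $M$; your remark that the depth argument of \Cref{characterization} needs only $\depth\le\dim$ over $S_\p$, and no canonical module, correctly fills a point the paper passes over with ``as the same statement in the proof of \Cref{characterization}''.
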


\begin{proof}
For every finitely generated $R$-module $N$ of finite injective dimension, we have $N\in \Gproj(R)^{\perp_1}$, and hence $\Ext_R^{1}(M,N)=0$ by hypothesis. Combining this with \Cref{extorthmcm}, we get that $M\in\MCM(S)$. Hence, $x$ is also $M$-regular. 

  \vskip5pt
Let $L\in \Gproj(R/xR)^{\perp_1}\subseteq \mo(R/xR)$. To show that $\Ext_{R/xR}^{1}(M/xM,L)=0$, we first show that $L\in \Gproj(R)^{\perp_1}$ as an $R$-module. Pick arbitrary $G\in \Gproj(R)$. Combining this with $R\in\MCM(S)$, we have $G\in\MCM(S)$ as the same statement in the proof of \Cref{characterization}. Combining with $x$ is $S$-regular, we get that $x$ is both $G$-regular and $R$-regular. By \Cref{reduction} (1), $G/xG\in \Gproj(R/xR)$, 
 and hence $\Ext^{1}_{R/xR}(G/xG, L)=0$. Since $L\in \mo(R/xR)$, one has $\Ext_R^{1}(G, L)=0$ by \Cref{reduction2} (2). Namely, $L\in \Gproj(R)^{\perp_1}$. Hence, the assumption on $M$ yields that $\Ext_R^{1}(M, L)=0$. Again by \Cref{reduction2} (2), there is $\Ext_{R/xR}^{1}(M/xM, L)=0$. 
\end{proof}

\begin{theorem}\label{takanswer1} Let $(S,\m, k)$ be a Cohen--Macaulay local ring. Assume that $R$ is a noetherian $S$-algebra and that $R$, viewed as an $S$-module, is maximal Cohen--Macaulay. Then, in $\mo(R)$, $$\Gproj(R)=\leftindex^{\perp_\infty}(\Gproj(R)^{\perp_\infty})=\leftindex^{\perp_1}(\Gproj(R)^{\perp_1})$$ In particular, $(\Gproj(R), \Gproj(R)^{\perp_1})$ is a hereditary cotorsion pair in $\mo(R)$. 
\end{theorem}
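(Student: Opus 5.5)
The plan is induction on $d=\dim(S)$, reducing modulo a regular element by means of \Cref{regperp1} and \Cref{reduction}, and settling the base case with \Cref{characterization}.

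First the formal reductions. Since $\Gproj(R)^{\perp_1}=\Gproj(R)^{\perp_\infty}$ by \Cref{thick subcategory}, it is enough to prove $\Gproj(R)=\leftindex^{\perp_1}(\Gproj(R)^{\perp_1})$. Indeed, we always have
$$\Gproj(R)\subseteq\leftindex^{\perp_\infty}(\Gproj(R)^{\perp_\infty})\subseteq \leftindex^{\perp_1}(\Gproj(R)^{\perp_\infty})=\leftindex^{\perp_1}(\Gproj(R)^{\perp_1}),$$
so the middle term is squeezed. The hereditary cotorsion pair then follows exactly as in \Cref{cotorsion pair}, using that $\Gproj(R)$ is resolving. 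The only nontrivial inclusion is therefore $\leftindex^{\perp_1}(\Gproj(R)^{\perp_1})\subseteq\Gproj(R)$.

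Base case $d=0$. Here $S$ is Artinian, so it is Cohen--Macaulay and admits the canonical module $\omega=E_S(k)$, and $R$ is trivially maximal Cohen--Macaulay over $S$. \Cref{characterization} gives $\Gproj(R)=\leftindex^{\perp_1}\Thick(\proj(R)\cup\{R^\dagger\})$, and the proof of \Cref{characterization} shows $\Thick(\proj(R)\cup\{R^\dagger\})\subseteq\Gproj(R)^{\perp_1}$. Taking left orthogonals reverses the inclusion, so
$$\leftindex^{\perp_1}(\Gproj(R)^{\perp_1})\subseteq\leftindex^{\perp_1}\Thick(\proj(R)\cup\{R^\dagger\})=\Gproj(R).$$
The same argument works whenever $S$ admits a canonical module, but a general local Cohen--Macaulay $S$ need not have one, which is why we induct.

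Inductive step $d>0$. Take an $S$-regular element $x\in\m$ and let $M\in\leftindex^{\perp_1}(\Gproj(R)^{\perp_1})$.
\begin{enumerate}
\item By \Cref{regperp1}, $M\in\MCM(S)$, so $x$ is $M$-regular, and $M/xM\in\leftindex^{\perp_1}(\Gproj(R/xR)^{\perp_1})$.
\item The ring $\bar S=S/xS$ is Cohen--Macaulay local of dimension $d-1$, and $\bar R=R/xR$ is a noetherian $\bar S$-algebra. Since $x$ is $R$-regular and $R\in\MCM(S)$, we get $\bar R\in\MCM(\bar S)$, so the hypotheses of the theorem hold for $\bar R$ over $\bar S$.
\item The inductive hypothesis then gives $M/xM\in\Gproj(\bar R)$.
\item Since $x$ is both $R$-regular and $M$-regular, \Cref{reduction} (1) lifts this to $M\in\Gproj(R)$.
\end{enumerate}

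The main obstacle is already absorbed into \Cref{regperp1}: one needs the reduction step to preserve membership in $\leftindex^{\perp_1}(\Gproj(-)^{\perp_1})$, and in particular that $M$ is maximal Cohen--Macaulay over $S$ so that $x$ is $M$-regular. The remaining care is the base case, where one must confirm that an Artinian local $S$ meets the hypotheses of \Cref{characterization} with $\omega=E_S(k)$, as noted in \Cref{def of canonical}.
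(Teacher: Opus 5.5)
Your proof is correct and is essentially the paper's argument. The paper reduces modulo a full maximal $S$-regular sequence $x_1,\ldots,x_d$ at once, implicitly iterating \Cref{regperp1} and \Cref{reduction} (1), and then applies \Cref{cotorsion pair}, which comes from \Cref{characterization}, to the Artin algebra $R/(x_1,\ldots,x_d)R$. Your induction on $d$ with base case \Cref{characterization} is the same argument, and your check that $R/xR$ stays maximal Cohen--Macaulay over $S/xS$ makes explicit the step the paper's iteration relies on.
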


\begin{proof}

We know by \Cref{thick subcategory} (1) that $\Gproj(R)^{\perp_1}=\Gproj(R)^{\perp_\infty}$, and hence, $$\Gproj(R) \subseteq \leftindex^{\perp_\infty}(\Gproj(R)^{\perp_\infty})\subseteq \leftindex^{\perp_1}(\Gproj(R)^{\perp_1})$$ It remains to prove $\leftindex^{\perp_1}(\Gproj(R)^{\perp_1})\subseteq \Gproj(R)$. For each $M$ in $\leftindex^{\perp_1}(\Gproj(R)^{\perp_1})$, next, we show that $M\in\Gproj(R)$.

      \vskip5pt
      
    Choose a maximal $S$-regular sequence $x_1,\ldots,x_d$ in $\m$, where $d=\dim (S)$. By \Cref{regperp1}, $M\in\MCM(S)$ and 
    $$M/(x_1,\ldots,x_d)M\in \leftindex^{\perp_1}(\Gproj(R/(x_1,\ldots,x_d)R)^{\perp_1}).$$
    Note that $R/(x_1,\ldots,x_d)R$ is an Artin algebra. By \Cref{cotorsion pair}, $$\Gproj(R/(x_1,...,x_d)R)=\leftindex^{\perp_1}(\Gproj(R/(x_1,\ldots,x_d)R)^{\perp_1}).$$ It follows that $M/(x_1,\ldots,x_d)M\in \Gproj(R/(x_1,\ldots,x_d)R)$, and hence $M\in \Gproj(R)$ by \Cref{reduction} (1). 
\end{proof}

\begin{remark}\label{Support}
 Let $R$ be a commutative Noetherian ring. In \cite[Question 4.7]{Takahashi}, R. Takahashi asked whether
\[
\Gproj(R)=\leftindex^{\perp_\infty}(\Gproj(R)^{\perp_\infty})
\]
always holds.

  \vskip5pt
  
Thus, \Cref{cotorsion pair} and \Cref{takanswer1} show that the question has an affirmative answer for certain two classes of algebras $R$; in particular, it holds for a Cohen--Macaulay local ring. This significantly improves \cite[Corollary 4.6]{Takahashi}, which states that the question holds when $R$ is a generically Gorenstein Cohen--Macaulay local ring admitting a canonical module.
\end{remark}

\begin{chunk}
   Let $\mathcal C$ be a subcategory of $\mo(R)$ (resp. $\mo(R^{\rm op})$). We write $\leftindex^{\top_1}{\mathcal C}$ (resp. $\mathcal C^{\top_1}$) to be the full subcategory of $\mo(R^{\rm op})$ (resp. $\mo(R)$) consisting of all modules $X$ satisfying $\Tor_1^R(X,C)=0$ (resp. $\Tor_1^R(C,X)=0$) for all $C\in\mathcal C$. Similarly, we write $\leftindex^{\top_\infty}{\mathcal C}$ (resp. $\mathcal C^{\top_\infty}$) to be the full subcategory of $\mo(R^{\rm op})$ (resp. $\mo(R)$) consisting of all modules $X$ satisfying $\Tor_i^R(X,C)=0$ (resp. $\Tor_i^R(C,X)=0$) for all $C\in\mathcal C$ and all $i>0$. 
\end{chunk}

Similar to \Cref{def-cotorsion}, one can define Tor pairs. See details of Tor pairs in \cite[Section 5]{GT}. In \Cref{Tor-pair}, we prove a version of \Cref{takanswer1} for Tor pairs. Before this, we begin with some preliminary results. 

\begin{corollary}\label{torpairartin}
   Let $\Lambda$ be an Artin algebra with the duality functor $D$. Then $\Gproj(\Lambda^{\rm op})$ is precisely the $\Tor$-orthogonal class of $\Thick_{\mo(\Lambda)}(\{\Lambda, D(\Lambda_\Lambda)\})$. That is, 
    \[
    \Gproj(\Lambda^{\rm op})=^{\top_{1}}\Thick_{\mo(\Lambda)}(\{\Lambda, D(\Lambda_\Lambda)\})=^{\top_{\infty}}\Thick_{\mo(\Lambda)}(\{\Lambda, D(\Lambda_\Lambda)\}). 
    \]
\end{corollary}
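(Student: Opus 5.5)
The plan is to convert the Tor-orthogonality into Ext-orthogonality via the duality $D$, and then apply \Cref{characterization} and \Cref{infty setting} to the Artin algebra $\Lambda^{\rm op}$. The basic tool is the standard isomorphism
$$
D\Tor_i^\Lambda(X,C)\cong \Ext^i_{\Lambda^{\rm op}}(X, D C)\qquad (X\in\mo(\Lambda^{\rm op}),\ C\in\mo(\Lambda),\ i\ge 0),
$$
which holds because $D=\Hom_S(-,J)$ is exact and converts tensor into Hom by adjunction. Since $D$ is faithful on finite length modules, $\Tor_i^\Lambda(X,C)=0$ if and only if $\Ext^i_{\Lambda^{\rm op}}(X,DC)=0$. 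Consequently, for a subcategory $\mathcal C\subseteq\mo(\Lambda)$ we get
$$
\leftindex^{\top_1}{\mathcal C}=\leftindex^{\perp_1}{D\mathcal C}\quad\text{and}\quad \leftindex^{\top_\infty}{\mathcal C}=\leftindex^{\perp_\infty}{D\mathcal C}\quad\text{in } \mo(\Lambda^{\rm op}).
$$

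Next, identify $D\mathcal C$ for $\mathcal C=\Thick_{\mo(\Lambda)}(\{\Lambda, D(\Lambda_\Lambda)\})$. The functor $D\colon\mo(\Lambda)\to\mo(\Lambda^{\rm op})$ is an exact duality, so it sends short exact sequences to short exact sequences and preserves direct summands. Hence it maps thick subcategories bijectively onto thick subcategories, and
$$
D\,\Thick_{\mo(\Lambda)}(\mathcal S)=\Thick_{\mo(\Lambda^{\rm op})}(D\mathcal S).
$$
Here $D({}_\Lambda\Lambda)$ is the right module $D(\Lambda)$. Also $D D(\Lambda_\Lambda)\cong\Lambda_\Lambda$, which is the regular module over $\Lambda^{\rm op}$. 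Thus $D\mathcal C=\Thick(\{\Lambda^{\rm op}, D(\Lambda)\})$ in $\mo(\Lambda^{\rm op})$. Because $\add\Lambda^{\rm op}=\proj(\Lambda^{\rm op})$ and thick subcategories are closed under summands and finite sums, this equals $\Thick(\proj(\Lambda^{\rm op})\cup\{D(\Lambda)\})$.

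Now apply \Cref{characterization} and \Cref{infty setting} to $R=\Lambda^{\rm op}$ over the Artin ring $S$ with canonical module $\omega=J$; this is allowed by \Cref{Examples} (2). In this setting $R^\dagger=\Hom_S(\Lambda^{\rm op},J)$, and as a left $\Lambda^{\rm op}$-module this is exactly $D({}_\Lambda\Lambda)$. The only point requiring care is matching the left/right structures: the left $\Lambda^{\rm op}$-structure on $\Hom_S(\Lambda,J)$ comes from the left $\Lambda$-structure of $\Lambda$. We then obtain
$$
\Gproj(\Lambda^{\rm op})=\leftindex^{\perp_1}{D\mathcal C}=\leftindex^{\perp_\infty}{D\mathcal C},
$$
and combining this with the first step gives both claimed equalities.

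The main obstacle is bookkeeping rather than substance: checking the Tor–Ext duality isomorphism with the correct sidedness, and verifying that $D$ carries ${}_\Lambda\Lambda$ and $D(\Lambda_\Lambda)$ to the two generators $R^\dagger$ and $R$ required by \Cref{characterization} for $R=\Lambda^{\rm op}$.
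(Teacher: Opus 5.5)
Your proposal is correct and follows essentially the same route as the paper. The paper also transports the thick subcategory through $D$, noting that $D(\Thick_{\mo(\Lambda^{\rm op})}(\{\Lambda_\Lambda, D(\Lambda)\}))=\Thick_{\mo(\Lambda)}(\{\Lambda, D(\Lambda_\Lambda)\})$, converts $\Tor$-vanishing to $\Ext$-vanishing via $\Ext^i_{\Lambda^{\rm op}}(-,D(M))\cong D(\Tor_i^\Lambda(-,M))$, and then applies \Cref{characterization} to $R=\Lambda^{\rm op}$; your explicit appeal to \Cref{infty setting} for the $\top_\infty$ equality and your check of the left/right structure on $R^\dagger$ spell out details the paper leaves implicit.
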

\begin{proof}
    Note that $D(\Thick_{\mo(\Lambda^{\rm op})}(\{\Lambda_\Lambda, D(\Lambda)\}))=\Thick_{\mo(\Lambda)}(\{\Lambda, D(\Lambda_\Lambda)\})$. Thus, the assertion follows by \Cref{characterization} and the natural isomorphism 
    \[
    \Ext_{\Lambda^{\rm op}}^i(-, D(M))\cong D(\Tor_i^{\Lambda}(-, M)) 
    \]
    for each $i>0$ and $M\in\mo(\Lambda)$. 
\end{proof}

\begin{lemma}\label{tormcm} Let $(S,\m, k)$ be a Cohen--Macaulay local ring and $x\in \m$ be an $S$-regular element. Assume that $R$ is a noetherian $S$-algebra and that $R$, viewed as an $S$-module, is maximal Cohen--Macaulay. 
If $M\in \leftindex^{\top_1}(\Gproj(R^{\rm op})^{\top_1})$, then $M\in\MCM(S)$ and $M/xM\in \leftindex^{\top_1}(\Gproj(R^{\rm op}/xR^{\rm op})^{\top_1})$. 
\end{lemma}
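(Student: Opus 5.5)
Note first the conventions: $M\in\mo(R^{\rm op})$, and $\Gproj(R^{\rm op})^{\top_1}$ consists of left $R$-modules $N$ with $\Tor_1^R(G,N)=0$ for all $G\in\Gproj(R^{\rm op})$. The plan is to mirror the proof of \Cref{regperp1}, replacing Ext against modules of finite injective dimension by Tor against modules of finite flat (projective) dimension.

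\textbf{Step 1: $M\in\MCM(S)$.} Choose a maximal $S$-regular sequence $x_1,\ldots,x_d$ in $\m$. Since $R\in\MCM(S)$, it is also $R$-regular. Set $N:=R/(x_1,\ldots,x_d)R$, a left $R$-module. The Koszul complex $K\otimes_S R$ is a finite free resolution of $N$ over $R$, so $\operatorname{pd}_R N<\infty$. Hence $\Tor_1^R(G,N)=0$ for every $G\in\Gproj(R^{\rm op})$; this holds because $\Tor_i^R(G,P)=0$ for projective $P$ and $G$ is an arbitrarily high syzygy of Gorenstein projectives, so dimension shifting applies. Thus $N\in\Gproj(R^{\rm op})^{\top_1}$, and the hypothesis gives $\Tor_1^R(M,N)=0$. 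As in the proof of \Cref{extorthmcm},
\[
\Tor_1^R(M,N)\cong \h_1(M\otimes_R (K\otimes_S R))\cong \h_1(M\otimes_S K),
\]
so $\h_1(K\otimes_S M)=0$. By \cite[Corollary 1.6.19]{BH}, $x_1,\ldots,x_d$ is $M$-regular, so $M\in\MCM(S)$. In particular $x$ is $M$-regular, because $x$ is $S$-regular and $M$ is MCM; if $M\neq 0$, every associated prime of $M$ is minimal in $S$.

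\textbf{Step 2: reduction modulo $x$.} Let $L\in\Gproj(R^{\rm op}/xR^{\rm op})^{\top_1}$, a left $R/xR$-module. We want $\Tor_1^{R/xR}(M/xM,L)=0$, and we first show $L\in \Gproj(R^{\rm op})^{\top_1}$. Take $G\in\Gproj(R^{\rm op})$. As in the proof of \Cref{characterization}, $G\in\MCM(S)$, so $x$ is $G$-regular, and \Cref{reduction} (1), applied to $R^{\rm op}$, gives $G/xG\in\Gproj(R^{\rm op}/xR^{\rm op})$. Hence $\Tor_1^{R/xR}(G/xG,L)=0$. By \Cref{reduction} (3), with roles $N=G$ and the module $L$ annihilated by $x$, we get $\Tor_1^R(G,L)\cong\Tor_1^{R/xR}(G/xG,L)=0$. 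So $L\in\Gproj(R^{\rm op})^{\top_1}$, and the hypothesis yields $\Tor_1^R(M,L)=0$. Applying \Cref{reduction} (3) again, now with $N=M$, which is legitimate since $x$ is $M$-regular by Step 1, gives $\Tor_1^{R/xR}(M/xM,L)\cong\Tor_1^R(M,L)=0$. This is the claim.

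\textbf{Expected obstacle.} The delicate point is Step 1: checking that the Koszul identification of $\Tor_1^R(M,R/\underline{x}R)$ with $\h_1(K\otimes_S M)$ respects sides. The $S$-action is central, so $M\otimes_R(R\otimes_S K)\cong M\otimes_S K$, and this should go through. A secondary point is making sure \Cref{reduction} (3) is applied with the correct sidedness, since there the module annihilated by $x$ sits in the second variable and the regular one in the first, exactly as used above.
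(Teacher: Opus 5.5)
Your proposal is correct and follows the paper's proof essentially verbatim. Step 1 tests against the finite-projective-dimension module $R/(x_1,\ldots,x_d)R$ and reuses the Koszul identification from the proof of \Cref{extorthmcm}, and Step 2 is the Tor-analogue of \Cref{regperp1}, using \Cref{reduction} (1) over $R^{\rm op}$ and \Cref{reduction} (3) twice (the hypothesis that $x$ is $R$-regular, needed there, follows from $R\in\MCM(S)$ by the same associated-prime argument you gave for $M$).
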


\begin{proof}
Assume $\dim(S)=d$. Choose a maximal $S$-regular sequence $x_1,...,x_d\in \m$. Note that $x_1,\ldots,x_d$ is also $R$-regular as $R\in\MCM(S)$. For every finitely generated $R$-module $N$ of finite projective dimension, we have $N\in \Gproj(R^{\rm op})^{\top_1}$. In particular, $R/(x_1,...,x_d)R\in \Gproj(R^{\rm op})^{\top_1}$ and $\Tor^R_1(M, R/(x_1,...,x_d)R)=0$. Thus, $M\in\MCM(S)$ by the same argument as in the last part of the proof of \Cref{extorthmcm}. For the second statement, the proof is similar to \Cref{regperp1}. 
\end{proof}

\begin{proposition}\label{Tor-pair}
    Let $(S,\m, k)$ be a Cohen--Macaulay local ring. Assume that $R$ is a noetherian $S$-algebra and that $R$, viewed as an $S$-module, is maximal Cohen--Macaulay. Then 
    $$\Gproj(R^{\rm op})={}^{\top_{\infty}}(\Gproj(R^{\rm op})^{\top_{\infty}})={}^{\top_{1}}(\Gproj(R^{\rm op})^{\top_{1}}).$$ 
\end{proposition}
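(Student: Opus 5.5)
The plan mirrors the proof of \Cref{takanswer1}, with $\Tor$ in place of $\Ext$ and \Cref{torpairartin} in place of \Cref{cotorsion pair}.

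\textbf{The easy inclusions.} First I check that $\Gproj(R^{\rm op})\subseteq {}^{\top_\infty}(\Gproj(R^{\rm op})^{\top_\infty})$. For $G\in\Gproj(R^{\rm op})$ and $i\geq 1$, dimension shifting along the totally acyclic complex writes $G$ as an $i$-th syzygy, $G\cong\Omega^i G'$ with $G'\in\Gproj(R^{\rm op})$. Hence $\Tor_i^R(G',N)\cong\Tor_1^R(\Omega^{i-1}G',N)$ with $\Omega^{i-1}G'\in\Gproj(R^{\rm op})$. It follows that $\Gproj(R^{\rm op})^{\top_1}=\Gproj(R^{\rm op})^{\top_\infty}$, the $\Tor$-analogue of \Cref{thick subcategory}~(1). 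This gives the chain
\[
\Gproj(R^{\rm op})\subseteq {}^{\top_\infty}(\Gproj(R^{\rm op})^{\top_\infty})\subseteq {}^{\top_1}(\Gproj(R^{\rm op})^{\top_1}),
\]
and it remains to show the last class lies in $\Gproj(R^{\rm op})$.

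\textbf{Reduction modulo a regular sequence.} Take $M\in{}^{\top_1}(\Gproj(R^{\rm op})^{\top_1})$ and a maximal $S$-regular sequence $x_1,\ldots,x_d$ in $\m$.
\begin{itemize}
\item By \Cref{tormcm}, $M\in\MCM(S)$ and $M/x_1M\in{}^{\top_1}(\Gproj(R^{\rm op}/x_1R^{\rm op})^{\top_1})$.
\item The ring $R/x_1R$ is a noetherian $S/x_1S$-algebra, maximal Cohen--Macaulay over the Cohen--Macaulay local ring $S/x_1S$, so the hypotheses are preserved.
\item Iterating gives $\overline M:=M/(x_1,\ldots,x_d)M\in{}^{\top_1}(\Gproj(\Lambda^{\rm op})^{\top_1})$ with $\Lambda=R/(x_1,\ldots,x_d)R$, an Artin algebra over $S/(x_1,\ldots,x_d)S$.
\end{itemize}
Along the way the sequence stays $M$-regular, because each successive quotient is MCM over the successive quotient ring by \Cref{tormcm}.

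\textbf{The Artinian case.} Set $\mathcal T=\Thick_{\mo(\Lambda)}(\{\Lambda,D(\Lambda_\Lambda)\})$. By \Cref{torpairartin}, $\Gproj(\Lambda^{\rm op})={}^{\top_1}\mathcal T$. Since ${}^{\top_1}\mathcal T\subseteq\Gproj(\Lambda^{\rm op})$, every $N\in\mathcal T$ satisfies $\Tor_1^\Lambda(G,N)=0$ for all $G\in\Gproj(\Lambda^{\rm op})$, so $\mathcal T\subseteq\Gproj(\Lambda^{\rm op})^{\top_1}$. Therefore
\[
{}^{\top_1}(\Gproj(\Lambda^{\rm op})^{\top_1})\subseteq{}^{\top_1}\mathcal T=\Gproj(\Lambda^{\rm op}),
\]
and so $\overline M\in\Gproj(\Lambda^{\rm op})$.

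\textbf{Lifting back.} Applying \Cref{reduction}~(1) to right modules (that is, over $R^{\rm op}$, which is also a noetherian $S$-algebra that is MCM over $S$) $d$ times gives $M\in\Gproj(R^{\rm op})$.

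\textbf{Main obstacle.} The delicate step is the second statement of \Cref{tormcm}, which is only asserted there as ``similar''. I would check it as follows.
\begin{itemize}
\item Let $L\in\Gproj(R^{\rm op}/xR^{\rm op})^{\top_1}$, a left $R/xR$-module. To see that $L\in\Gproj(R^{\rm op})^{\top_1}$, take $G\in\Gproj(R^{\rm op})$. Then $G$ is MCM over $S$ (as in the proof of \Cref{characterization}), so $x$ is $G$-regular and $G/xG\in\Gproj(R^{\rm op}/xR^{\rm op})$ by \Cref{reduction}~(1).
\item For $xL=0$ and $x$ regular on $G$, there is an isomorphism $\Tor_1^R(G,L)\cong\Tor_1^{R/xR}(G/xG,L)$. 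This is \Cref{reduction2}~(3), with the roles arranged so that the regular element acts on the right module $G$ and kills $L$. The right-hand side vanishes, so $\Tor_1^R(G,L)=0$.
\item Hence $\Tor_1^R(M,L)=0$, and the same isomorphism with $M$ in place of $G$ transfers this to $\Tor_1^{R/xR}(M/xM,L)=0$.
\end{itemize}
The only subtlety is bookkeeping of sides in \Cref{reduction2}~(3), which I expect to go through verbatim from the commutative argument.
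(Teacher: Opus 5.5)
Your proposal is correct and follows the paper's proof. The paper uses resolving-ness of $\Gproj(R^{\rm op})$ to get $\Gproj(R^{\rm op})^{\top_1}=\Gproj(R^{\rm op})^{\top_\infty}$, then runs the argument of \Cref{takanswer1} with \Cref{tormcm} and \Cref{torpairartin} in place of \Cref{regperp1} and \Cref{cotorsion pair}, exactly as you do. Your check of the second half of \Cref{tormcm} is sound, and \Cref{reduction2}~(3) applies verbatim, taking the right module to be $G$ (or $M$) and the left module to be the $R/xR$-module $L$, so no rearrangement of sides is needed.
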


\begin{proof}
    Since $\Gproj(R^{\rm op})$ is a resolving subcategory of $\mo (R^{\rm op})$, it follows that $$\Gproj(R^{\rm op})^{\top_{\infty}}=\Gproj(R^{\rm op})^{\top_{1}}.$$
    The remainder of the proof is similar to that of \Cref{takanswer1} by using \Cref{tormcm} in place of \Cref{regperp1} and using \Cref{torpairartin} in place of \Cref{cotorsion pair}. 
\end{proof}

Given a  resolving subcategory $\mathcal X$ of $\mo(R)$, the following examples show that $(\mathcal X, \mathcal X^{\perp_1})$ does not form a cotorsion pair nor $(\X,\X^{\top_1})$ form a Tor-pair in general, even if $R$ is a commutative Gorenstein local ring, thereby supporting the special feature of \Cref{takanswer1} and \Cref{Tor-pair}. In the following, for each $M\in\mo(R)$, denote by $\Omega^n_R(M)$ the $n$-th syzygy of $M$ by choosing a projective resolution in $\mo(R)$; note that Schanuel’s Lemma yields that $\Omega^n_RM$ is independent of the
choice of the projective resolution of $M$ up to projective summands.

\begin{example}\label{counterexample}
  (1) Let $(S,\mathfrak n,k)$ be a regular local ring of positive dimension and $0\neq f \in \mathfrak n^2$. Fix an integer $n\ge 2$ and set  $R=S\llbracket x\rrbracket /(f, x^n)$. Let $d:=\dim R$. For each $M,N\in \mo(R)$, let $\cx_R(M,N)$ denote the complexity of the pair $(M,N)$ as in \cite[page 286]{AB2000}, and set $\cx_R(M)=\cx_R(M,k)$. 
Set $$\X:=\{M\in \MCM (R) \mid  \cx_R(M)\le 1\}.$$ 
Note that $\X$ is a resolving subcategory of $\mo (R)$. Moreover, in $\mo(R)$,
$$
\syz^d_R k\in \MCM (R)\setminus \X={}^{\perp_1}(\X^{\perp_1})\setminus \X \text{ and }\syz^d_R k \in {}^{\top_1}(\X^{\top_1})\setminus \X.
$$

First, we claim  $\X^{\perp_{\infty}}=\X^{\perp_1}=\mathcal{P}^{<\infty}(R)$ in $\mo (R)$.  Indeed, the first equality follows because $\X$ is resolving. For the second equality, $\mathcal{P}^{<\infty}(R)\subseteq \X^{\perp_1}$ is clear since $R$ is Gorenstein.
For the reverse inclusion, let $N\in \X^{\perp_1}=\X^{\perp_{\infty}}$. It remains to show $N\in \mathcal{P}^{<\infty}$. If not, assume $N\notin \mathcal{P}^{<\infty}(R)$. Then, since $R$ is a complete intersection local ring of codimension $2$, $\cx_R(N)=1$ or $2$. If $\cx_R (N)=1$, then $\syz^d_R N\in \X$, and $\cx_R(\syz^d_R N, N)=\cx_R(N,N)=\cx_R(N)=1$ (see \cite[Theorem II]{AB2000}), thus $N\notin (\syz^d_RN)^{\perp_{\infty}}$, contradicting $N\in \X^{\perp_{\infty}} \text{ and } \syz^d_R N\in \X$. Similarly, if $\cx_R(N)=2$, then again by \cite[Theorem II]{AB2000}, $$\cx_R(\syz^d_R R/(x), N)\ge \cx_R(\syz^d_R R/(x))+\cx_R(N)-2=\cx_R(R/(x))=1,$$ 
and hence $N\notin (\syz^d_R R/(x))^{\perp_{\infty}}$ which again contradicts $\syz^d_R R/(x) \in \X$ and $N\in \X^{\perp_{\infty}}$. Thus, we must have $\X^{\perp_1}=\mathcal{P}^{<\infty}(R)$,  and hence $\syz^d_R k\in \MCM (R)\setminus \X={}^{\perp_1}(\X^{\perp_1})\setminus \X.$

\vskip5pt
Next, we claim $\X^{\top_{\infty}}=\X^{\top_1}=\mathcal{P}^{<\infty}(R)$. Indeed, the first equality is because $\X$ is resolving. It is enough to show $\X^{\perp_{\infty}}=\X^{\top_{\infty}}$. Note that $\X=\syz^i_R \X$ for all $i>0$. Indeed, $\syz^i_R \X\subseteq \X$ is clear. For the converse direction, for each $M\in \X$, there exists $N\in \MCM(R)$ such that $M\cong \syz^i_R N$ up to free summand (since $R$ is Gorenstein), and since complexity does not change with syzygy and free summand, we get $N\in \X$. Thus, $\X^{\top_{\infty}}=(\syz^d_R \X)^{\perp_{\infty}}=\X^{\perp_{\infty}}$, where the first equality is by \cite[Theorem III]{AB2000}. Thus, ${}^{\top_1}(\X^{\top_1})={}^{\top_1}(\mathcal{P}^{<\infty}(R))=\MCM(R)$, where the last equality follows from the proof of \Cref{extorthmcm} and \cite[Theorem 5.3.10]{Christensen}. Thus, $\syz^d_R k \in {}^{\top_1}(\X^{\top_1})\setminus \X.$

\vskip5pt
    (2) Let $R$ be a  local Cohen--Macaulay ring of positive dimension $d$ which is not reduced (e.g., $R=k\llbracket x,y\rrbracket /(x^2)$). We claim that in $\mo (R)$, we have $$\res(\syz^d_R k)\neq \MCM(R)={}^{\perp_1}(\res(\syz^d_R k)^{\perp_1}),$$
    where $\res(\syz^d_R k)$ represents the smallest resolving subcategory of $\mo(R)$ containing $\syz^d_Rk$.
    Indeed, since $\res(\syz^d_R k)\subseteq \MCM(R)$, we have $\mathcal{I}^{<\infty}(R)\subseteq \res(\syz^d_R k)^{\perp_1}$ (see \cite[Exercise 3.1.24]{BH}). Combining with \cite[Proposition 3.1.14]{BH}, we get  $\res(\syz^d_R k)^{\perp_1}=\mathcal{I}^{<\infty}(R)$. By \cite[Exercise 3.1.24]{BH} we have $\MCM(R)\subseteq {}^{\perp_1}(\mathcal{I}^{<\infty}(R))$, and then combining with \Cref{extorthmcm} we get $\MCM(R)={}^{\perp_1}(\mathcal{I}^{<\infty}(R))={}^{\perp_1}(\res(\syz^d_R k)^{\perp_1})$. Finally, if we had $\res(\syz^d_R k)=\MCM(R)$, then every maximal Cohen--Macaulay $R$-module would be locally free on punctured spectrum, and $R$ would be an isolated singularity. In particular, $R$ would be reduced (since $\dim R>0$), which leads to a contradiction. 

    \vskip5pt

    Similarly, we also have in $\mo (R)$, $$\res(\syz^d_R k)\neq \MCM(R)={}^{\top_1}(\res(\syz^d_R k)^{\top_1}).$$ The proof is similar by using \cite[Theorem 5.3.10]{Christensen} in place of \cite[Exercise 3.1.24]{BH} and using \cite[Corollary 1.3.2]{BH} in place of \cite[Proposition 3.1.14]{BH}. 
\end{example}

Next, we show that the finitely generated Gorenstein projective modules of an Artin algebra have a more explicit characterization than \Cref{characterization}. 

\vskip5pt
Let $\Lambda$ be an Artin algebra. Choose a projective resolution of $D(\Lambda_\Lambda)$
\[
\xymatrix@R=0.2cm{
\cdots\ar[r]& P_2\ar[rr]^-{\del_2}\ar@{->>}[dr]_{p_2} & & P_1\ar@{->>}[dr]_{p_1}\ar[rr]^-{\del_1}& & P_0\ar[r]^{p_0} &D(\Lambda_\Lambda)\ar[r]&0,\\
&&K_2\ar@{^(->}[ur]_{t_1}&&K_1\ar@{^(->}[ur]_{t_0}&&&
}
\]
where $P_i\in \proj(\Lambda)$ and $K_i$ is the image of $\del_i$ for each $i\geq 1$. Also choose an injective coresolution of $\leftindex_\Lambda\Lambda$
\[
\xymatrix@R=0.2cm{
0\ar[r] & \leftindex_{\Lambda}\Lambda\ar[r]^{q_0} & I_0\ar[rr]^-{\del_0}\ar@{->>}[dr]_{s_0} & & I_{1}\ar[rr]^-{\del_{1}}\ar@{->>}[dr]_{s_1} & & I_{2}\ar[r]^-{\del_{2}}&\cdots, \\
& & & L_{1}\ar@{^(->}[ur]_{q_1} & & L_{2}\ar@{^(->}[ur]_{q_2} & &
}
\]
where $I_i\in \inj(\Lambda)$ is finitely generated injective and $L_i$ is the image of $\del_i$ for each $i\leq 0$.

\vskip5pt

Define $\mathcal S_\Lambda\colonequals \{D(\Lambda_\Lambda), K_1, \cdots, K_n,\cdots\}$ and $\mathcal T_\Lambda\colonequals\{\leftindex_\Lambda\Lambda, L_{1}, \cdots, L_{n}, \cdots\}$. 

\begin{proposition}\label{char-GP-Artin}
    Keep the notations above. Then in $\mo(\Lambda)$ there is
    \[
\Gproj(\Lambda)=\leftindex^{\perp_1}(\mathcal S_\Lambda \cup \mathcal T_\Lambda). 
    \]
\end{proposition}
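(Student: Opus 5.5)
The inclusion $\Gproj(\Lambda)\subseteq\leftindex^{\perp_1}(\mathcal S_\Lambda\cup\mathcal T_\Lambda)$ is immediate from \Cref{characterization}. Each $K_i$ is a syzygy of $D(\Lambda_\Lambda)=\Lambda^\dagger$, and each $L_i$ is a cosyzygy of $\Lambda$ with finitely generated injective terms $I_j\cong$ summands of $(\Lambda^\dagger)^m$. So $\mathcal S_\Lambda\cup\mathcal T_\Lambda\subseteq\Thick(\proj(\Lambda)\cup\{\Lambda^\dagger\})$, and hence
\[
\Gproj(\Lambda)=\leftindex^{\perp_1}\Thick(\proj(\Lambda)\cup\{\Lambda^\dagger\})\subseteq\leftindex^{\perp_1}(\mathcal S_\Lambda\cup\mathcal T_\Lambda).
\]

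For the converse, take $X\in\leftindex^{\perp_1}(\mathcal S_\Lambda\cup\mathcal T_\Lambda)$. The plan is to check the criterion \Cref{complete resolution} directly.

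\textbf{Step 1: vanishing of $\Ext^i(X,\Lambda)$.} Dimension shifting along the injective coresolution of $\Lambda$ gives
\[
\Ext^i_\Lambda(X,\Lambda)\cong\Ext^1_\Lambda(X,L_{i-1})
\]
for $i\geq2$ (with $L_0=\Lambda$). These all vanish since $\mathcal T_\Lambda$ contains $\Lambda$ and all $L_i$. So $\Ext^{i}_\Lambda(X,\Lambda)=0$ for all $i>0$.

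\textbf{Step 2: vanishing of $\Ext^i(X,D(\Lambda_\Lambda))$ and its syzygies.} Since $D(\Lambda_\Lambda)$ is injective, $\Ext^{i}_\Lambda(X,D(\Lambda_\Lambda))=0$ for $i>0$ automatically. From $0\to K_{j+1}\to P_j\to K_j\to 0$ one gets
\[
\Ext^i_\Lambda(X,K_j)\cong\Ext^{i+1}_\Lambda(X,K_{j+1})
\]
for $i\geq1$, using Step 1 for the middle terms. Together with the hypothesis $\Ext^1(X,K_j)=0$ for all $j$, this gives $\Ext^{i}(X,K_j)=0$ for all $i>0$ and all $j$. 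Thus $X$ is Ext-orthogonal to $\Lambda$, $D(\Lambda_\Lambda)$ and all their (co)syzygies.

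\textbf{Step 3: a left $\proj(\Lambda)$-approximation that is monic, with cokernel again in the class.} Let $f\colon X\to Q$ be a left $\proj(\Lambda)$-approximation.

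\emph{$f$ is monic.} Since $X$ embeds into an injective module, a sum of copies of $D(\Lambda_\Lambda)$, it suffices to show that every map $g\colon X\to D(\Lambda_\Lambda)$ factors through $f$. Write $p_0\colon P_0\to D(\Lambda_\Lambda)$ with kernel $K_1$. Since $\Ext^1(X,K_1)=0$, $g$ lifts to a map $X\to P_0$, and that lift factors through $f$. Hence $f$ is monic.

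\emph{The cokernel $C=\Coker f$ lies in $\leftindex^{\perp_1}(\mathcal S_\Lambda\cup\mathcal T_\Lambda)$.} Consider the long exact sequence for $0\to X\to Q\to C\to0$ against $Y\in\mathcal S_\Lambda\cup\mathcal T_\Lambda$:
\[
\Hom(Q,Y)\to\Hom(X,Y)\to\Ext^1(C,Y)\to\Ext^1(Q,Y)=0.
\]
So $\Ext^1(C,Y)=0$ iff every map $X\to Y$ extends to $Q$.
\begin{itemize}
\item For $Y=K_j$, a map $X\to K_j\hookrightarrow P_{j-1}$ extends to $Q$ (approximation). We then need the extension to land in $K_j$, i.e. to be killed by $\del_{j-1}$. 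This should follow from the vanishing $\Ext^1(X,K_{j-1})$ and the long exact sequence; this is a diagram chase.
\item For $Y=L_i$, the same argument works via $L_i\hookrightarrow I_i$, presenting $I_i$ using the projective cover and Step 2.
\item Alternatively, and more cleanly: show $f$ is a left approximation with respect to every module $Y$ admitting a finite filtration-by-short-exact-sequences argument. Using $0\to K_{j+1}\to P_j\to K_j\to0$ with $\Ext^1(X,K_{j+1})=0$ gives that $X\to K_j$ lifts to $P_j$, hence factors through $f$. Similarly, $0\to L_i\to I_i\to L_{i+1}\to0$, combined with $\Ext^1(C,I_i)=0$ (Step 2, since $I_i\in\add D(\Lambda_\Lambda)$) and a five-lemma dimension shift, gives $\Ext^1(C,L_{i})\cong$ a subquotient controlled by $\Hom(X,L_{i+1})$-extensions, which lift as just shown.
\end{itemize}

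\textbf{Step 4: iterate.} Iterating Step 3 produces the coresolution required in \Cref{complete resolution}. Together with Step 1, this yields $X\in\Gproj(\Lambda)$.

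\textbf{Main obstacle.} The main obstacle is the closure claim in Step 3: that $\Coker f$ is again in $\leftindex^{\perp_1}(\mathcal S_\Lambda\cup\mathcal T_\Lambda)$, especially for the $L_i$. If the direct chase stalls, the fallback is to show that $\leftindex^{\perp_1}(\mathcal S_\Lambda\cup\mathcal T_\Lambda)$ equals $\leftindex^{\perp_1}\Thick(\proj\cup\{D\Lambda\})$. The argument would be that the Ext-orthogonality obtained in Steps 1–2 to all of $\Lambda$, $D(\Lambda_\Lambda)$, $K_j$, $L_i$ in all degrees propagates to the thick closure via the long exact sequences. Then \Cref{characterization} finishes the proof.
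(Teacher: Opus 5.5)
Your architecture is the paper's. You take a left $\proj(\Lambda)$-approximation $f\colon X\to Q$, show it is monic via $X\hookrightarrow D(\Lambda_\Lambda)^n$, show $\Coker f$ stays in ${}^{\perp_1}(\mathcal S_\Lambda\cup\mathcal T_\Lambda)$, iterate, and apply \Cref{complete resolution}. Your Step 1, the monicity argument, and your third-bullet treatment of the $K_j$ (lift $X\to K_j$ through $P_j\to K_j$ using $\Ext^1_\Lambda(X,K_{j+1})=0$, then factor through $f$) are exactly the paper's cases (1) and (2). Step 2 is harmless but unnecessary.

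The gap is the $L_i$ case of the closure claim in Step 3. You flag it yourself as the main obstacle, and none of the routes you offer closes it.
\begin{itemize}
\item Going through the embeddings $L_i\hookrightarrow I_i$ (or $K_j\hookrightarrow P_{j-1}$) goes the wrong way. The extension $Q\to I_i$ has no reason to land in $L_i$, and correcting it is equivalent to $\Ext^1_\Lambda(\Coker f,L_i)=0$, which is what you are trying to prove.
\item The sequence $0\to L_i\to I_i\to L_{i+1}\to 0$ only gives $\Ext^1_\Lambda(C,L_i)\cong\Coker\bigl(\Hom_\Lambda(C,I_i)\to\Hom_\Lambda(C,L_{i+1})\bigr)$. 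This is again a statement about maps out of $C$ and is circular.
\item The fallback is not a proof either. For a fixed $X$, the class $X^{\perp_\infty}$ is closed under extensions, summands and cokernels of monomorphisms, but not under kernels of epimorphisms. So orthogonality does not pass formally from generators to $\Thick(\proj(\Lambda)\cup\{D(\Lambda_\Lambda)\})$. Indeed, the same generic reasoning applied to $\{\Lambda,D(\Lambda_\Lambda)\}$ would make every semi-Gorenstein projective module Gorenstein projective by \Cref{characterization}, which fails in general. Closure under kernels is exactly what the cosyzygy iteration has to supply.
\end{itemize}

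The missing step is short: use the sequence in which $L_i$ is the quotient. For $i\geq 1$ and $h\colon X\to L_i$, consider $0\to L_{i-1}\to I_{i-1}\xrightarrow{s_{i-1}}L_i\to 0$ with $L_0=\Lambda$. Since $\Ext^1_\Lambda(X,L_{i-1})=0$, we get $h=s_{i-1}g$ for some $g\colon X\to I_{i-1}$. Because $I_{i-1}\in\add D(\Lambda_\Lambda)$, and you already showed that every map $X\to D(\Lambda_\Lambda)$ factors through $f$, the map $g$, and hence $h$, factors through $f$. Equivalently, once $f$ is monic, dimension shifting along the injective $I_{i-1}$ and the projective $Q$ gives
\[
\Ext^1_\Lambda(C,L_i)\cong\Ext^2_\Lambda(C,L_{i-1})\cong\Ext^1_\Lambda(X,L_{i-1})=0 .
\]
The case $L_0=\Lambda$ is immediate from $f$ being a $\proj(\Lambda)$-approximation. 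This is the paper's case (3). With it, $f$ is a left $(\mathcal S_\Lambda\cup\mathcal T_\Lambda)$-approximation, $\Coker f$ stays in the class, and your Step 4 goes through.
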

\begin{proof}
    By \Cref{thick subcategory} (2), $\Gproj(\Lambda)\subseteq \leftindex^{\perp_1}{\Thick(\{\leftindex_\Lambda\Lambda\}\cup \{D(\Lambda_\Lambda)\})}$. Since $\mathcal S_\Lambda \cup \mathcal T_\Lambda\subseteq \Thick(\{\leftindex_\Lambda\Lambda\}\cup \{D(\Lambda_\Lambda)\})$, there is $\leftindex^{\perp_1}{\Thick(\{\leftindex_\Lambda\Lambda\}\cup \{D(\Lambda_\Lambda)\})}\subseteq \leftindex^{\perp_1}(\mathcal S_\Lambda \cup \mathcal T_\Lambda)$. Thus, $\Gproj(\Lambda)\subseteq \leftindex^{\perp_1}(\mathcal S_\Lambda \cup \mathcal T_\Lambda)$. 

    \vskip5pt

    Conversely, let $G\in \leftindex^{\perp_1}{(\mathcal S_\Lambda \cup \mathcal T_\Lambda)}$. Since $G\in \leftindex^{\perp_1}{\ \mathcal T}_\Lambda$, $G\in \leftindex^{\perp_\infty}\Lambda$. According to \Cref{complete resolution}, it suffices to find an exact sequence of $\Lambda$-modules
    \[
0 \rightarrow G\xrightarrow{d_0} P_{-1} \xrightarrow{d_{-1}} P_{-2}\xrightarrow{d_{-2}}\cdots\rightarrow P_{-n}\rightarrow \cdots,
\]
where $P_{-i}\in \proj(\Lambda)$ and the natural embedding $\Ima d_{-(i-1)}\rightarrow P_{-i}$ is a left $\proj(\Lambda)$-approximation for each $i\geq 1$. 

\vskip5pt

Let $d_0: G\rightarrow P_{-1}$ be a left $\proj(\Lambda)$-approximation. We claim that $d_0$ is also a left $(\mathcal S_\Lambda \cup \mathcal T_\Lambda)$-approximation. Let $M\in \mathcal S_\Lambda \cup \mathcal T_\Lambda$. There are three cases: 

\vskip5pt

$(1)$\ $M=\leftindex_\Lambda\Lambda$. Then $\Hom_\Lambda(d_0, \leftindex_\Lambda\Lambda)$ is surjective since $d_0$ is a left $\proj(\Lambda)$-approximation. 

\vskip5pt

$(2)$\ $M=K_i$, $i\geq 0$, $K_0=D(\Lambda_\Lambda)$. For $f: G\rightarrow K_i$, since $\Ext_\Lambda^1(G, K_{i+1})=0$, $f$ factors through $p_i\colon P_i\rightarrow K_i$, say with $f=p_i g$, where $g\colon G\rightarrow P_i$. Then $g$ factors through $d_0$ as $d_0$ is a left $\proj(\Lambda)$-approximation, say $g=hd_0$, where $h\colon P_{-1}\rightarrow P_i$. Thus, $f=p_i g= p_i hd_0$. It follows that $\Hom_\Lambda(d_0, K_i)$, $i\geq 0$ is surjective. 
  \[
\xymatrix{
&&P_{-1}\ar@{..>}[d]_h& G\ar[d]^{f}\ar@{-->}[ld]_g\ar[l]_{d_0}&\\
0\ar[r] & K_{i+1}\ar[r]^{t_i} & P_i\ar[r]^-{p_i} & K_i\ar[r] & 0
}
  \]

$(3)$\ $M=L_i$, $i\geq 1$. Define that $L_0\colonequals\leftindex_\Lambda\Lambda$. For $f: G\rightarrow L_i$, since $\Ext_\Lambda^1(G, L_{i-1})=0$, $f$ factors through $s_{i-1}\colon I_{i-1}\rightarrow L_i$, say with $f=s_{i-1} g$, where $g\colon G\rightarrow I_{i-1}$. Then $g$ factors through $d_0$ as it has been proved above that $\Hom_\Lambda(d_0, D(\Lambda_\Lambda))$ is surjective. Say $g=hd_0$, where $h\colon P_{-1}\rightarrow I_{i-1}$. Thus, $f=s_{i-1} g= s_{i-1} hd_0$. It follows that $\Hom_\Lambda(d_0, K_i)$, $i\geq 0$ is surjective. 
  \[
\xymatrix{
&&P_{-1}\ar@{..>}[d]_h& G\ar[d]^{f}\ar@{-->}[ld]_g\ar[l]_{d_0}&\\
0\ar[r] & L_{i-1}\ar[r]^{q_{i-1}} & I_{i-1}\ar[r]^-{s_{i-1}} & L_i\ar[r] & 0 
}
  \]

Now it has been proved that $d_0$ is a left $(\mathcal S_\Lambda \cup \mathcal T_\Lambda)$-approximation. In particular, $d_0$ is a monomorphism since there is an embedding $\alpha:G\rightarrow D(\Lambda_\Lambda)^n$ and $\alpha$ factors through $d_0$. Consider the short exact sequence
\[
\xymatrix{
    0\ar[r] & G\ar[r]^-{d_0} & P_{-1}\ar[r]^-{u_0} & G_{-1}\ar[r] & 0
    }
\]
For $M\in \mathcal S_\Lambda \cup \mathcal T_\Lambda$, applying $\Hom_\Lambda(-, M)$ to the exact sequence above gives an exact sequence
\[
\xymatrix{
     \Hom_{\Lambda}(P_{-1},M)\ar@{->>}[rr]^{\Hom_\Lambda(d_0, M)}&&\Hom_{\Lambda}(G,M)\ar[r] & \Ext_{\Lambda}^1(G_{-1}, M)\ar[r] & 0
    }
\]
which shows that $\Ext_{\Lambda}^1(G_{-1}, M)=0$. In other words, $G_{-1}\in \leftindex^{\perp_1}{(\mathcal S_\Lambda \cup \mathcal T_\Lambda)}$. Thus, as having been proved above, there is a monic left $(\mathcal S_\Lambda \cup \mathcal T_\Lambda)$-approximation $v_1: G_{-1}\rightarrow P_{-2}$ such that $G_{-2}\colonequals \Coker v_1\in \leftindex^{\perp_1}{(\mathcal S_\Lambda \cup \mathcal T_\Lambda)}$. Then, the desired long exact sequence 
 \[
0 \rightarrow G\xrightarrow{d_0} P_{-1} \xrightarrow{d_{-1}} P_{-2}\xrightarrow{d_{-2}}\cdots\rightarrow P_{-n}\rightarrow \cdots
\]
follows by induction. This completes the proof. 
\end{proof}

Recall that a module $M\in\mo(\Lambda)$ is called \emph{semi-Gorenstein projective} provided that $\Ext^i_\Lambda(M,\Lambda)=0$ for all $i>0$; see \cite{RZ} for more details. Note that a module $M\in\mo(\Lambda)$ is semi-Gorenstein projective if and only if $M\in \leftindex^{\perp_1}{\ \mathcal T}_\Lambda$. Combining this with \Cref{char-GP-Artin}, we get:
\begin{corollary}\label{Gpprop}
 Let $\Lambda$ be an Artin algebra and $M$ be a semi-Gorenstein projective $\Lambda$-module. The following are equivalent. 
 \begin{enumerate}
     \item $M$ is Gorenstein projective.


     \item $M\in \leftindex^{\perp_1}{\mathcal S}_\Lambda$. 
 \end{enumerate}
\end{corollary}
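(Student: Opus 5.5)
The plan is to reduce the corollary directly to \Cref{char-GP-Artin}, using the observation made just before the statement that semi-Gorenstein projectivity is the same as membership in $\leftindex^{\perp_1}{\mathcal T}_\Lambda$. Since $\leftindex^{\perp_1}(\mathcal S_\Lambda\cup\mathcal T_\Lambda)=\leftindex^{\perp_1}\mathcal S_\Lambda\cap\leftindex^{\perp_1}\mathcal T_\Lambda$, \Cref{char-GP-Artin} says that $M\in\Gproj(\Lambda)$ exactly when $M$ is semi-Gorenstein projective and $M\in\leftindex^{\perp_1}\mathcal S_\Lambda$. With $M$ assumed semi-Gorenstein projective, this gives the equivalence of (1) and (2) at once.

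The one step to verify is that semi-Gorenstein projectivity coincides with lying in $\leftindex^{\perp_1}{\mathcal T}_\Lambda$. I would prove this by dimension shifting along the injective coresolution $0\to\Lambda\to I_0\to I_1\to\cdots$ with cosyzygies $L_i$ (and $L_0=\leftindex_\Lambda\Lambda$). The short exact sequences $0\to L_{i-1}\to I_{i-1}\to L_i\to 0$ with $I_{i-1}$ injective give
\[
\Ext^1_\Lambda(M,L_i)\cong\Ext^2_\Lambda(M,L_{i-1})\cong\cdots\cong\Ext^{i+1}_\Lambda(M,\Lambda).
\]
Therefore $\Ext^1_\Lambda(M,L_i)=0$ for all $i\geq 0$ if and only if $\Ext^{j}_\Lambda(M,\Lambda)=0$ for all $j\geq 1$.

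For the direction (1)$\Rightarrow$(2), no use of semi-Gorenstein projectivity is needed. The $K_i$ and $D(\Lambda_\Lambda)$ have finite injective dimension or lie in $\Thick(\proj(\Lambda)\cup\inj(\Lambda))$, so they belong to $\Gproj(\Lambda)^{\perp_1}$ by \Cref{fid contains}.

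I expect no real obstacle here. The only care needed is in the indexing of the dimension shift, which must cover $L_0=\Lambda$ as well as the higher cosyzygies.
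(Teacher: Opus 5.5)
Your proposal is correct and follows the paper's argument: the paper also notes that semi-Gorenstein projectivity amounts to $M\in\leftindex^{\perp_1}{\mathcal T}_\Lambda$ and then reads the equivalence off \Cref{char-GP-Artin}. Your dimension shift along the injective coresolution, including the case $L_0=\leftindex_\Lambda\Lambda$, is exactly the verification the paper leaves implicit.
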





In the following, for $M \in \mo(\Lambda)$, the \emph{Auslander bound}
$\Ab(M,\mo(\Lambda))$, introduced by J.~Wei \cite{wei}, is defined as
\[
\Ab(M,\mo(\Lambda))
= \sup \bigl\{\, e(M,N) \;\big|\; N \in \mo(\Lambda)
\text{ and } e(M,N) < \infty \,\bigr\},
\]
where
$
e(M,N)
= \sup \bigl\{\, n \in \mathbb{N} \;\big|\;
\Ext^n_{\Lambda}(M,N) \neq 0 \,\bigr\}.
$

\begin{corollary}\label{Auslanderbound} Let $\Lambda$ be an Artin algebra. If $M\in \mo(\Lambda)$ is semi-Gorenstein projective and $\Ab(M, \mo(\Lambda))<\infty$, then $M$ is Gorenstein projective.   
\end{corollary}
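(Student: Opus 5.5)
By \Cref{Gpprop}, since $M$ is semi-Gorenstein projective, it suffices to show $M\in \leftindex^{\perp_1}{\mathcal S}_\Lambda$, i.e.\ $\Ext^1_\Lambda(M,K_i)=0$ for all $i\geq 0$, where $K_0=D(\Lambda_\Lambda)$. The plan is to use the dimension shift along the chosen projective resolution of $D(\Lambda_\Lambda)$, and then the finite Auslander bound to kill the shifted groups.

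\textbf{Step 1: reduce to high degrees.} Fix $i\geq 0$ and apply $\Hom_\Lambda(M,-)$ to the short exact sequences $0\to K_{j+1}\to P_j\to K_j\to 0$. For each $n\geq 1$ we have $\Ext^n_\Lambda(M,P_j)=0$, because $P_j\in\add\Lambda$ and $M$ is semi-Gorenstein projective. Hence the connecting maps
\[
\Ext^n_\Lambda(M,K_j)\to \Ext^{n+1}_\Lambda(M,K_{j+1})
\]
are isomorphisms for all $n\geq 1$. Iterating gives $\Ext^1_\Lambda(M,K_i)\cong \Ext^{1+m}_\Lambda(M,K_{i+m})$ for every $m\geq 0$.

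\textbf{Step 2: show the high Ext groups are eventually zero.} Here $D(\Lambda_\Lambda)$ is injective, so $\Ext^n_\Lambda(M,K_0)=0$ for all $n\geq 1$. By Step 1, $\Ext^{n}_\Lambda(M,K_j)\cong \Ext^{n-j}_\Lambda(M,K_0)=0$ whenever $n-j\geq 1$. Consequently $e(M,K_j)\leq j<\infty$ for every $j$, and the Auslander bound gives the uniform estimate $e(M,K_j)\leq b:=\Ab(M,\mo(\Lambda))$ for all $j$.

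\textbf{Step 3: conclude.} Given $i$, choose $m$ with $1+m>b$. Then
\[
\Ext^1_\Lambda(M,K_i)\cong \Ext^{1+m}_\Lambda(M,K_{i+m})=0,
\]
since $e(M,K_{i+m})\leq b$. Thus $M\in\leftindex^{\perp_1}\mathcal S_\Lambda$, and \Cref{Gpprop} shows that $M$ is Gorenstein projective.

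The only step needing care is Step 2, namely that each $e(M,K_j)$ is genuinely finite, so that the Auslander bound applies to it. This is exactly what the injectivity of $K_0$ together with the dimension shift provides.
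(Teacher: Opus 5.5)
Your proof is correct and follows the paper's argument. You reduce via \Cref{Gpprop} to $\Ext^1_\Lambda(M,K_i)=0$, dimension-shift along the projective resolution of $D(\Lambda_\Lambda)$ using semi-Gorenstein projectivity, and use injectivity of $D(\Lambda_\Lambda)$ to get $e(M,K_j)\le j<\infty$, so that the Auslander bound applies; the only difference is that the paper argues by contradiction for $K_1$ and then iterates, while you shift each $\Ext^1_\Lambda(M,K_i)$ directly into a degree exceeding $\Ab(M,\mo(\Lambda))$, which handles all $i$ at once.
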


\begin{proof} 
By \Cref{Gpprop}, it is equivalent to show $M\in \leftindex^{\perp_1}{\mathcal S}_\Lambda$. Namely, we need to show
$\Ext^1_{\Lambda}(M,K_n)=0$ for all $n\ge 1$, where $K_n$ is the $n$-th syzygy in some projective resolution of $D(\Lambda_\Lambda)$. Indeed, since $M$ is semi-Gorenstein projective and $\Ext_{\Lambda}^{>0}(M,D(\Lambda_\Lambda))=0$, we have $\Ext_{\Lambda}^{i}(M,K_1)=0$ for $i>1$. Since $K_{j+1}$ is the first syzygy of $K_j$ for each $j\geq 1$, we get isomorphisms $$\Ext_{\Lambda}^{i+1}(M,K_1)\cong \Ext_{\Lambda}^{i+2}(M,K_2) \cong \cdots \cong \Ext_{\Lambda}^{i+j}(M, K_j)$$ 
for $i\ge 0$ and $j\ge 1$. Combining this with $\Ext^i_\Lambda(M,K_1)=0$ for $i>1$,   we conclude that  $\Ext^{i+j}_{\Lambda}(M, K_j)= 0$ for $i>0$ and $j\geq 1$. If $\Ext^1_\Lambda(M,K_1)\neq 0$, then the above isomorphisms will show that $\Ext^j_\Lambda(M,K_j)\neq 0$ for all $j\geq 1$, and hence $\Ab(M,\mo(\Lambda))\geq j$ for all $j\geq 1$. This contradicts $\Ab(M,\mo(\Lambda))<\infty$. Thus, $\Ext^1_\Lambda(M,K_1)=0$, and hence $\Ext^i_\Lambda(M,K_1)=0$ for all $i>0$.

\vskip5pt
A similar argument as above, with $D(\Lambda_\Lambda)$ replaced by $K_1$ and $K_1$ replaced by $K_2$, shows that
$
\Ext^{>0}_{\Lambda}(M, K_2)=0.
$
Iterating this argument yields the same conclusion for every $K_n$, as desired.
\end{proof}

\begin{remark}

 (1) It follows from the proof of \Cref{Auslanderbound} that if $\Lambda$ is an Artin algebra, and $M\in \mo(\Lambda)$ is semi-Gorenstein projective such that $\Ab(M, {\mathcal S}_\Lambda)<\infty$, then $M$ is Gorenstein projective.

\vskip5pt
(2) Let $M\in \mo(\Lambda)$ have finite reducing Gorenstein dimension as in \cite[Definition 2.5]{reducing}. Then there exist short exact sequences $$\{0\to M_{i-1}^{\oplus a_i}\to M_i \to \Omega^{n_i}_\Lambda M_{i-1}^{\oplus b_i} \to 0\}_{i=1}^r$$ in $\mo(\Lambda)$, where $n_i\ge 0$, $a_i,b_i>0$, $M_0=M$, and $M_r$ has finite Gorenstein projective dimension, say $d$.  Taking $d$-th syzygies of these exact sequences, and remembering $\Omega^d_\Lambda M_r$  is Gorenstein projective, and since $\mathcal S_{\Lambda} \in \Gproj(\Lambda)^{\perp_{\infty}}$, a same proof as in  \cite[5.1]{redcelik} shows $\Ab(\Omega^d_\Lambda M, \mathcal S_\Lambda)<\infty$, i.e., $\Ab( M, \mathcal S_\Lambda)<\infty$. Thus, if $\Ext_{\Lambda}^{\gg 0}(M, \Lambda)=0$, then passing to high enough syzygy of $M$, and remembering $\Ab( \Omega^n_\Lambda M, \mathcal S_\Lambda)<\infty$, for all $n\ge 0$, we get by (1) that $M$ has finite Gorenstein projective dimension. This gives an alternative proof of \cite[Corollary 2.9]{reducing} for Artin algebras.

\end{remark}
\begin{remark}
    Let $S$ be a Cohen--Macaulay local ring, and let $R$ be a noetherian
$S$-algebra such that $R \in \MCM(S)$. For $M \in \mo(R)$, if
$M \in \MCM(S)$, $M$ is semi--Gorenstein projective over $R$, and
$\Ab(M,\mo(R)) < \infty$, then $M$ is Gorenstein projective over $R$.

\vskip5pt
Indeed, choose a maximal $S$-regular sequence
$\x = x_1,\ldots,x_d$. Then $\x$ is both $R$-regular and
$M$-regular; see \cite[Corollary~1.6.19]{BH}. It can be verified directly using \cite[Lemma 2 (ii) in Chapter 6, Section 18]{matsumura} that the Artin algebra $R/\x R$ and the module
$M/\x M$ satisfy the assumption of \Cref{Auslanderbound}. Hence, $M/\x M$ is Gorenstein
projective over $R/\x R$. Finally, \Cref{reduction} (1) shows that $M$ is Gorenstein
projective over $R$.
\end{remark}

\section{Weakly Gorensteinness and virtually Gorensteinness}\label{WGVG}

We prove \Cref{T1} in this section; see \Cref{char-weakly Gor}. 
It provides a characterization of left weakly Gorenstein rings for certain noetherian algebras over Cohen--Macaulay rings.
We also study virtually Gorenstein rings and obtain an equivalent characterization related to a question of X.-W. Chen \cite{Chen-postdoc}; see \Cref{G-free} and \Cref{char-VG}. 

\vskip5pt

\begin{chunk}\label{def of weakly Gor}
     Following C. M. Ringel and P. Zhang \cite[1.1]{RZ}, a noetherian ring $R$ is said to be \emph{left weakly Gorenstein} if $\leftindex^{\perp_\infty} R=\Gproj(R)$ in $\mo (R)$. We will say that $R$ is \emph{weakly Gorenstein} if both $R$ and $R^{\rm op}$ are left weakly Gorenstein.

\vskip5pt
     
   Note that $\leftindex^{\perp_\infty} R=\leftindex^{\perp_\infty} ({\mathcal P}^{<\infty}(R))$ in $\mo(R)$. By \Cref{def-cotorsion}, any Iwanaga--Gorenstein ring is weakly Gorenstein. Also, \Cref{characterization} yields that any commutative Gorenstein ring is weakly Gorenstein.
 \end{chunk}
   The following example is a weakly Gorenstein Cohen--Macaulay local ring but not Gorenstein.
\begin{example}
    Let $(R,\m)$ be a commutative local Artin ring with $\m^2=0$, and assume that $R$ is not Gorenstein. For example, $R=k\llbracket x,y\rrbracket/(x,y)^2$, where $k$ is a field. Then $R$ is a weakly Gorenstein Cohen--Macaulay local ring. Indeed, there is $\Gproj(R)=\proj(R)=\leftindex^{\perp_\infty} R$; see \cite[Proposition 2.4]{Yoshino2003}.
\end{example}
The following proposition is a characterization of left weakly Gorenstein rings. The equivalences in \Cref{recover} are essentially proved by C. Huang and Z. Y. Huang \cite[Claim 5.1]{HH}. When $R$ is an Artin algebra, the equivalences in \Cref{recover} are also established by C. M. Ringel and P. Zhang in \cite[Theorem 1.2]{RZ}. 

\begin{proposition}\label{recover}
Let $R$ be a noetherian ring. The following are equivalent.
    \begin{enumerate}
        \item $R$ is left weakly Gorenstein. 

        \item Any module in $\leftindex^{\perp_\infty} R$ is reflexive.

        \item Any module in $\leftindex^{\perp_\infty} R$ is torsionless. 
    \end{enumerate}
\end{proposition}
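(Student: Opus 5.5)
We prove the cycle $(1)\Rightarrow(2)\Rightarrow(3)\Rightarrow(1)$ for an arbitrary noetherian ring $R$. For $(1)\Rightarrow(2)$, assume $\leftindex^{\perp_\infty}R=\Gproj(R)$ and let $M\in\leftindex^{\perp_\infty}R$. Then $M$ is the image of $d_0$ in a totally acyclic complex $\mathbf P$ of finitely generated projectives. Dualizing with $(-)^*=\Hom_R(-,R)$ gives an acyclic complex $\mathbf P^*$ of finitely generated projective right modules. Dualizing again returns $\mathbf P$, since finitely generated projectives are reflexive. Because $\mathbf P^*$ is acyclic and $(-)^*$ is left exact, $M^*$ is a kernel in $\mathbf P^*$ and $M^{**}$ is a kernel in $\mathbf P^{**}\cong\mathbf P$. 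Under this identification $\varphi_M$ corresponds to the identity on the relevant image/kernel in $\mathbf P$, so $M$ is reflexive. The implication $(2)\Rightarrow(3)$ is trivial.

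The main implication is $(3)\Rightarrow(1)$. The inclusion $\Gproj(R)\subseteq\leftindex^{\perp_\infty}R$ always holds, so let $M\in\leftindex^{\perp_\infty}R$. We build, using \Cref{complete resolution}, a coresolution $0\to M\to P_{-1}\to P_{-2}\to\cdots$ with each inclusion a left $\proj(R)$-approximation.

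To do this, take a left $\proj(R)$-approximation $f\colon M\to P$, which exists by \Cref{approximation}. Since $R$ is noetherian, one can also take a surjection $R^n\to M^*$ of right modules and dualize it. Because $M$ is torsionless by (3), $M$ embeds in a projective, and any such embedding factors through $f$; hence $f$ is a monomorphism.

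Let $C=\Coker f$. Applying $\Hom_R(-,R)$ to $0\to M\to P\to C\to 0$ gives $\Hom(P,R)\to\Hom(M,R)$ surjective, because $f$ is an approximation, and therefore $\Ext^1_R(C,R)=0$. Moreover $\Ext^{i+1}_R(C,R)\cong\Ext^i_R(M,R)=0$ for $i\ge1$. Thus $C\in\leftindex^{\perp_\infty}R$, and by (3) $C$ is again torsionless. We repeat the construction with $C$ in place of $M$ and iterate.

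The resulting coresolution, together with $\Ext^{>0}_R(M,R)=0$, shows $M\in\Gproj(R)$ by \Cref{complete resolution}. The key point is that $\leftindex^{\perp_\infty}R$ is stable under cokernels of monic left $\proj(R)$-approximations, so the torsionless hypothesis can be reapplied at every step. I expect the only delicate step to be the identification of $\varphi_M$ with the canonical isomorphism in $(1)\Rightarrow(2)$.
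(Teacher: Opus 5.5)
Your proof is correct and follows essentially the paper's argument. It uses the same cycle, with $(3)\Rightarrow(1)$ obtained by iterating monic left $\proj(R)$-approximations whose cokernels stay in $\leftindex^{\perp_\infty}R$ and then applying \Cref{complete resolution}; your $(1)\Rightarrow(2)$ simply spells out the standard reflexivity of Gorenstein projectives. The only difference is cosmetic: the paper writes the approximation explicitly as $\pi^*\varphi_M\colon M\to P^*$ for a surjection $\pi\colon P\to M^*$ (monic since $\varphi_M$ and $\pi^*$ are, and an approximation by a short diagram chase), whereas you take an arbitrary left approximation and get monicity by factoring an embedding into a projective. Since \Cref{approximation} is only stated for noetherian algebras, your dual-of-a-surjection remark is the right justification for existence over a general noetherian ring.
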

\begin{proof}

    $(1)\Rightarrow (2)$. If $R$ is left weakly Gorenstein, then any module in $\leftindex^{\perp_\infty} R$ is Gorenstein projective, and hence any module in $\leftindex^{\perp_\infty} R$ is reflexive.

\vskip5pt

    The implication $(2)\Rightarrow (3)$ is trivial. 

\vskip5pt

    $(3)\Rightarrow (1)$. Assume any module in $ 
 \leftindex^{\perp_\infty} R$ is torsionless. We use $(-)^\ast$ to denote the functor $\Hom_R(-,R)$ or $\Hom_{R^{\rm op}}(-,R)$, depending on the context.
 For each $M\in \leftindex^{\perp_\infty} R$, choose a short exact sequence in $\mo(R)$
    $$
    0\rightarrow K\rightarrow P\xrightarrow\pi M^\ast\rightarrow 0,
    $$
    where $P\in \proj(R)$. Applying $(-)^\ast$ to the above short sequence, we get a short exact sequence
    $$
    0\rightarrow M^{\ast\ast}\xrightarrow{\pi^\ast} P^\ast\rightarrow C\rightarrow 0. 
    $$
    Since the canonical map $\varphi_M\colon M\rightarrow M^{\ast\ast}$ is injective, we have a short exact sequence
    $$
    0\rightarrow M\xrightarrow{\pi^\ast\varphi_M} P^\ast\rightarrow C^\prime\rightarrow 0.
    $$
    Combining this with $\pi$ is surjective, we can get that 
    $(\pi^\ast \varphi_M)^\ast\colon P^{\ast\ast}\rightarrow M^\ast$ is surjective. Indeed, this follows from the following commutative diagram
    $$
    \xymatrix{
P \ar[r]^-\pi \ar[d]_-{\varphi_P}& M^\ast\ar[d]_-{\varphi_{M^{\ast}}}\ar@{=}[rd]& \\
P^{\ast\ast}\ar[r]_-{\pi^{\ast\ast}}& M^{\ast\ast\ast}\ar[r]_-{(\varphi_M)^\ast}& M^\ast
    }.
    $$
Thus, we get that $\pi^\ast \varphi_M$ is a monic left  $\proj(R)$-approximation of $M$. 

\vskip5pt

 Since $\pi^\ast\varphi_M$ is a monic left $\proj(R)$-approximation and $M\in \leftindex^{\perp_\infty} R$, we get that $C^\prime \in\leftindex^{\perp_\infty} R$.
Similarly, one obtains a monic left $\proj(R)$-approximation of $C^\prime$. 
  By iterating this process, we can construct a projective coresolution of $M$
satisfying the conditions of \Cref{complete resolution}, and hence $M\in \Gproj(R)$. Consequently, $\Gproj(R)=\leftindex^{\perp_\infty} R$. That is, $R$ is left weakly Gorenstein. 
\end{proof}

\vskip5pt

The functor $(-)^\dagger=\Hom_S(-,\omega)$ is defined as the setting of the previous section.
\begin{lemma}\label{cm case-approximation}
   Let $S$ be a Cohen--Macaulay ring that admits a canonical module $\omega$. Assume that $R$ is a noetherian $S$-algebra and that $R$, viewed as an $S$-module, is maximal Cohen--Macaulay. Fix a short exact sequence
    \[
    \xymatrix{
    0\ar[r] & X\ar[r] & P\ar[r]^-{\pi} & R^\dagger\ar[r] & 0
    }
    \]
    where $P\in \proj(R)$. If $\Ext^i_R(M, X\oplus R)=0$ for all $i>0$, then $M$ has a monic left $\proj(R)$-approximation.  
\end{lemma}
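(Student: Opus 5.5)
The plan is to take an arbitrary left $\proj(R)$-approximation of $M$ and show that it is automatically a monomorphism. First I show that it also approximates with respect to $R^\dagger$. Then I show that $M$ embeds into a direct sum of copies of $R^\dagger$, which forces the approximation to be injective.

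\emph{Step 1: $M$ is maximal Cohen--Macaulay over $S$.} Apply $\Hom_R(M,-)$ to the fixed sequence $0\to X\to P\xrightarrow{\pi} R^\dagger\to 0$. For $i\geq 1$ this gives the exact piece
\[
\Ext^i_R(M,P)\to \Ext^i_R(M,R^\dagger)\to \Ext^{i+1}_R(M,X).
\]
Since $P\in\proj(R)=\add R$ and $\Ext^{>0}_R(M,R)=0$, the left term vanishes. The right term vanishes by hypothesis. Hence $\Ext^i_R(M,R^\dagger)=0$ for all $i>0$. By \Cref{change of ring-iso} this means $\Ext^i_S(M,\omega)=0$ for all $i>0$. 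Then \Cref{global version} (1) gives $M\in\MCM(S)$. \Cref{monomorphism} now provides a monomorphism $t\colon M\to (R^\dagger)^{m}$ for some $m\geq 0$.

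\emph{Step 2: a left $\proj(R)$-approximation of $M$ also approximates $\add R^\dagger$.} Let $f\colon M\to Q$ be a left $\proj(R)$-approximation; it exists by \Cref{approximation}, since $\proj(R)=\add R$. Let $g\colon M\to R^\dagger$ be any morphism. Because $\Ext^1_R(M,X)=0$, the map $\Hom_R(M,P)\to\Hom_R(M,R^\dagger)$ is surjective, so $g=\pi h$ for some $h\colon M\to P$. Since $P\in\proj(R)$, $h$ factors through $f$, say $h=h'f$. Hence $g=\pi h' f$ factors through $f$. The same argument applies componentwise to maps into $(R^\dagger)^m$.

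\emph{Step 3: $f$ is monic.} By Step 2, the monomorphism $t$ from Step 1 factors as $t=uf$ for some $u$. Hence $f$ is a monomorphism, and so $f$ is the desired monic left $\proj(R)$-approximation.

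The only step with real content is Step 1, namely passing from the vanishing of $\Ext^{>0}_R(M,X\oplus R)$ to $M\in\MCM(S)$. This is exactly where the dimension shift along $0\to X\to P\to R^\dagger\to 0$ and the change of rings isomorphism are used. Steps 2 and 3 are the same lifting argument as in the proof of \Cref{characterization}.
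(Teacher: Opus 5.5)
Your proposal is correct and follows essentially the same argument as the paper. The paper likewise obtains $\Ext^{>0}_R(M,R^\dagger)=0$ from the fixed sequence and deduces $M\in\MCM(S)$ via \Cref{change of ring-iso} and \Cref{global version}; it then embeds $M$ into $(R^\dagger)^n$, lifts this embedding through $\pi^n$ using $\Ext^1_R(M,X^n)=0$, and factors the lift through $f$ to conclude that $f$ is monic.
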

\begin{proof}
  Let $f\colon M\rightarrow Q$ be a left $\proj(R)$-approximation for some $Q\in\proj(R)$. Since $M\in \leftindex^{\perp_\infty} (X\oplus R)$, we conclude that $M\in \leftindex^{\perp_\infty} (R^\dagger)$ by the above short exact sequence.  By \Cref{global version} (1) and \Cref{change of ring-iso}, $M\in\MCM(S)$ as an $S$-module. By \Cref{monomorphism}, we can choose an embedding $h\colon M \rightarrow (R^\dagger)^n$. Since $\Ext_R^1(M, X^n)=0$, $h$ factors through $\pi^n\colon P^n\rightarrow (R^\dagger)^n$, say with $h=\pi^n g$, where $g\colon M\rightarrow P^n$. Then $g$ factors through $f$ as $f$ is a left $\proj(R)$-approximation, say $g=tf$, where $t\colon Q\rightarrow P^n$. Thus, $h=\pi^n g= \pi^n tf$, and hence $f$ is a monomorphism since $h$ is. 
  \[
\xymatrix{
&&Q\ar@{..>}[d]_t& M\ar[d]^{h}\ar@{-->}[ld]_g\ar[l]_{f}&\\
0\ar[r] & X^n\ar[r] & P^n\ar[r]^-{\pi^n} & (R^\dagger)^n\ar[r] & 0
}
  \]
\end{proof}


\begin{theorem}\label{char-weakly Gor}
   Let $S$ be a Cohen--Macaulay ring that admits a canonical module $\omega$. Assume that $R$ is a noetherian $S$-algebra and that $R$, viewed as an $S$-module, is maximal Cohen--Macaulay. The following are equivalent.
\begin{enumerate}
    \item $R$ is left weakly Gorenstein.

    \item $\Thick(\proj(R)\cup \{R^\dagger\})\subseteq (\leftindex^{\perp_\infty} {R})^{\perp_\infty}$. 

    \item $\Thick(\proj(R)\cup \{R^\dagger\})\subseteq (^{\perp_\infty} R)^{\perp_1}$. 

    \item $X\in (\leftindex^{\perp_\infty} R)^{\perp_\infty}$, where $X$ is as in \Cref{cm case-approximation}. 

    \item $X\in (\leftindex^{\perp_\infty} R)^{\perp_1}$, where $X$ is as in \Cref{cm case-approximation}. 
\end{enumerate}
\end{theorem}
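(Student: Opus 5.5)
The plan is to prove $(1)\Rightarrow(2)\Rightarrow(3)\Rightarrow(5)\Rightarrow(1)$ and $(2)\Rightarrow(4)\Rightarrow(5)$. All of these except $(5)\Rightarrow(1)$ are formal, so that implication is the real content.

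For $(1)\Rightarrow(2)$: if $R$ is left weakly Gorenstein, then $\leftindex^{\perp_\infty}R=\Gproj(R)$. By \Cref{characterization} together with \Cref{thick subcategory} (equivalently \Cref{infty setting}), we have
$$\Thick(\proj(R)\cup\{R^\dagger\})\subseteq \Gproj(R)^{\perp_1}=\Gproj(R)^{\perp_\infty}=(\leftindex^{\perp_\infty}R)^{\perp_\infty}.$$
The implications $(2)\Rightarrow(3)$ and $(4)\Rightarrow(5)$ are trivial. For $(2)\Rightarrow(4)$ and $(3)\Rightarrow(5)$, note that $X$ is the kernel of the epimorphism $P\to R^\dagger$ with $P\in\proj(R)$. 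Since thick subcategories are closed under kernels of epimorphisms, $X\in\Thick(\proj(R)\cup\{R^\dagger\})$.

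For $(5)\Rightarrow(1)$, I would first show that $X\in(\leftindex^{\perp_\infty}R)^{\perp_\infty}$, which reduces the hypothesis to that of \Cref{cm case-approximation}. The class $\leftindex^{\perp_\infty}R$ is closed under syzygies. So for $M\in\leftindex^{\perp_\infty}R$ and $i\ge 1$, dimension shifting gives
$$\Ext^{i+1}_R(M,X)\cong\Ext^1_R(\Omega^i M,X)=0,$$
where the isomorphism uses $\Ext^{>0}_R(M,R)=0$ to kill the intermediate terms coming from projective modules. Hence $\Ext^{>0}_R(M,X\oplus R)=0$ for every $M\in\leftindex^{\perp_\infty}R$, and \Cref{cm case-approximation} then gives every such $M$ a monic left $\proj(R)$-approximation $f\colon M\to Q$.

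Next I check that $C=\Coker f$ again lies in $\leftindex^{\perp_\infty}R$. Apply $\Hom_R(-,R)$ to $0\to M\to Q\to C\to 0$. Surjectivity of $\Hom(f,R)$, which holds because $f$ is a left $\proj(R)$-approximation, gives $\Ext^1_R(C,R)=0$. The long exact sequence together with $\Ext^{>0}_R(M,R)=0$ gives $\Ext^{i}_R(C,R)=0$ for $i\ge2$. Iterating this construction produces a coresolution by finitely generated projectives whose embeddings are left $\proj(R)$-approximations. Then \Cref{complete resolution} yields $M\in\Gproj(R)$, so $\leftindex^{\perp_\infty}R\subseteq\Gproj(R)$. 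The reverse inclusion always holds, so $R$ is left weakly Gorenstein.

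Alternatively, the existence of monic left $\proj(R)$-approximations shows that every module in $\leftindex^{\perp_\infty}R$ is torsionless, and then \Cref{recover} applies directly.

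The main obstacle is the step upgrading $(5)$ to the $\perp_\infty$ condition. It relies on $\leftindex^{\perp_\infty}R$ being closed under syzygies, and on the dimension-shift isomorphism being valid because $\Ext^{>0}_R(M,R)=0$ makes the projective terms in the resolution vanish from the relevant Ext groups. I would verify these two points carefully.
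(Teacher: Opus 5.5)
Your proof is correct and is essentially the paper's. Your $(5)\Rightarrow(1)$ is the paper's $(5)\Rightarrow(4)$ (dimension shifting along syzygies) followed by its $(4)\Rightarrow(1)$ (\Cref{cm case-approximation} plus \Cref{recover}, or the equivalent direct coresolution). The only structural difference is that you close the cycle via the trivial $(3)\Rightarrow(5)$ instead of the paper's direct $(3)\Rightarrow(1)$ through \Cref{characterization}, so your argument never needs the harder inclusion ${}^{\perp_1}\Thick(\proj(R)\cup\{R^\dagger\})\subseteq\Gproj(R)$.

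One small correction to your justification. The isomorphism $\Ext^{i+1}_R(M,X)\cong\Ext^1_R(\Omega^i M,X)$ holds for every $M$, because $\Ext^{>0}_R(P,-)=0$ for projective $P$; it does not use $\Ext^{>0}_R(M,R)=0$. That hypothesis is needed only to ensure $\Omega^i M\in{}^{\perp_\infty}R$, so that $(5)$ applies to $\Omega^i M$.
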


\begin{proof}
$(1)\Rightarrow (2)$. Assume that $R$ is left weakly Gorenstein. Then $\Gproj(R)=\leftindex^{\perp_\infty} R$. Hence, the statement of $(2)$ is equivalent to that  $\Thick(\proj(R)\cup \{R^\dagger\})\subseteq \Gproj(R)^{\perp_\infty}$. This follows immediately from \Cref{infty setting}.

\vskip5pt

The implication $(2)\Rightarrow (3)$ is trivial. 

\vskip5pt

$(3)\Rightarrow (1)$. The condition $\Thick(\proj(R)\cup \{R^\dagger\})\subseteq (^{\perp_\infty} R)^{\perp_1}$ is equivalent to say that $^{\perp_\infty} R\subseteq \leftindex^{\perp_1}{\Thick(\proj(R)\cup \{R^\dagger\})}$. It follows that $\Gproj(R)\subseteq \leftindex^{\perp_\infty}{R}\subseteq \leftindex^{\perp_1}{\Thick(\proj(R)\cup \{R^\dagger\})}=\Gproj(R)$. 

\vskip5pt

The implication $(2)\Rightarrow (4)$ is trivial as $X\in \Thick(\proj(R)\cup\{R^\dagger\})$. 

\vskip5pt

$(4)\Rightarrow (1)$. By \Cref{cm case-approximation} and assumption, any $M\in \leftindex^{\perp_\infty} R$ has a monic left $\proj(R)$-approximation. In particular, any $M\in \leftindex^{\perp_\infty} R$ is torsionless; see \Cref{torsionless}. By \Cref{recover}, $R$ is left weakly Gorenstein.  

\vskip5pt

The implication $(4)\Rightarrow (5)$ is trivial. 

\vskip5pt

$(5)\Rightarrow (4)$. It suffices to prove that for $M\in \leftindex^{\perp_\infty} R$ and $n\geq 2$, there is $\Ext_R^n(M, X)=0$. Choose a projective resolution of $M$
\[
\xymatrix@R=0.2cm{
\cdots\ar[r]& P_2\ar[rr]^-{\del_2}\ar@{->>}[dr] & & P_1\ar@{->>}[dr]\ar[rr]^-{\del_1}& & P_0\ar[r] &M\ar[r]&0.\\
&&K_2\ar@{^(->}[ur]&&K_1\ar@{^(->}[ur]&&&
}
\]
Note that each $K_i\in \leftindex^{\perp_\infty} R$, $i\geq 1$. It follows that for $n\geq 2$, $\Ext_R^n(M, X)=\Ext_R^1(K_{n-1}, X)=0$. This completes the proof. 
\end{proof}

\begin{corollary}
    Keep the assumptions as in \Cref{char-weakly Gor}. If, in addition, $R^\dagger\in \mathcal P^{<\infty}(R)$, then $R$ is left weakly Gorenstein. 
\end{corollary}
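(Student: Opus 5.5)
We aim to verify condition (4) (equivalently (2)) of \Cref{char-weakly Gor}. If $R^\dagger\in\mathcal P^{<\infty}(R)$, then every object of $\Thick(\proj(R)\cup\{R^\dagger\})$ has finite projective dimension. This holds because $\mathcal P^{<\infty}(R)$ is itself a thick subcategory of $\mo(R)$: it contains $\proj(R)$, is closed under direct summands, and satisfies the two-out-of-three property for short exact sequences by the usual long exact sequence argument for $\Ext$. Since $\mathcal P^{<\infty}(R)$ contains $\proj(R)\cup\{R^\dagger\}$, we get
\[
\Thick(\proj(R)\cup\{R^\dagger\})\subseteq \mathcal P^{<\infty}(R).
\]

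It remains to show $\mathcal P^{<\infty}(R)\subseteq(\leftindex^{\perp_\infty}R)^{\perp_\infty}$. Let $N$ have a finite projective resolution $0\to P_n\to\cdots\to P_0\to N\to 0$ with $P_j\in\proj(R)$, and let $M\in\leftindex^{\perp_\infty}R$. Since $\Ext^i_R(M,R)=0$ for all $i>0$ and $\Ext$ commutes with finite direct sums and summands, we have $\Ext^i_R(M,P)=0$ for all $P\in\proj(R)$ and $i>0$.

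We then argue by induction on $n$, using the syzygy short exact sequences $0\to\Omega N\to P_0\to N\to 0$. From the long exact sequence,
\[
\Ext^i_R(M,N)\cong\Ext^{i+1}_R(M,\Omega N)\quad\text{for } i>0.
\]
The right-hand side vanishes by induction, since $\Omega N$ has projective dimension less than that of $N$. The base case is $N$ projective. This is exactly the observation recorded in \Cref{def of weakly Gor} that $\leftindex^{\perp_\infty}R=\leftindex^{\perp_\infty}(\mathcal P^{<\infty}(R))$.

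Combining the two inclusions gives condition (2) of \Cref{char-weakly Gor}, so $R$ is left weakly Gorenstein. There is no real obstacle here; the only point needing care is the thickness of $\mathcal P^{<\infty}(R)$ in $\mo(R)$. Closure under cokernels of monomorphisms follows from the bound $\operatorname{pd}C\le\max\{\operatorname{pd}B,\operatorname{pd}A+1\}$ for a short exact sequence $0\to A\to B\to C\to 0$.
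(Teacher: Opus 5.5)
Your proof is correct and essentially the paper's: both rest on $\mathcal P^{<\infty}(R)\subseteq(\leftindex^{\perp_\infty}R)^{\perp_\infty}$ and then invoke \Cref{char-weakly Gor}. The only difference is that the paper checks the weaker condition (5) for the single module $X$ of \Cref{cm case-approximation} (a first syzygy of $R^\dagger$, hence of finite projective dimension), which avoids the thickness of $\mathcal P^{<\infty}(R)$ that you need for condition (2).
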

\begin{proof}
    Let $X$ be as in \Cref{cm case-approximation}. Since $ R^\dagger\in \mathcal P^{<\infty}(R)$, $X\in \mathcal P^{<\infty}(R)$. Thus, $X\in \mathcal P^{<\infty}(R)\subseteq (\leftindex^{\perp_\infty} R)^{\perp_1}$. By \Cref{char-weakly Gor}, $R$ is left weakly Gorenstein. 
\end{proof}

As an application of the above result, we have the following three consequences.  

\begin{corollary}\label{finite Krull dim case}
       Keep the assumptions as in \Cref{char-weakly Gor}. If, in addition, $\dim(S)<\infty$, then the following are equivalent. 

       \begin{enumerate}
    \item $R$ is left weakly Gorenstein.

    \item $\Thick(\proj(R)\cup {\mathcal I}^{<\infty}(R))\subseteq (\leftindex^{\perp_\infty} {R})^{\perp_\infty}$. 
\end{enumerate}
\end{corollary}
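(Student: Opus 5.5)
The plan is to reduce both implications to \Cref{char-weakly Gor}, using two facts: $R^\dagger\in{\mathcal I}^{<\infty}(R)$ when $\dim(S)<\infty$, and the inclusion of \Cref{fid contains}. First I note that $\dim(S)<\infty$ gives $\id_S(\omega)<\infty$ by \Cref{def of canonical}. Then the isomorphism $\Ext^i_R(-,R^\dagger)\cong\Ext^i_S(-,\omega)$ of \Cref{change of ring-iso} gives $\id_R(R^\dagger)<\infty$, exactly as in the proof of \Cref{ortho of proj-fid}. Since $R^\dagger$ is finitely generated, it lies in ${\mathcal I}^{<\infty}(R)$, so
$$\Thick(\proj(R)\cup\{R^\dagger\})\subseteq \Thick(\proj(R)\cup{\mathcal I}^{<\infty}(R)).$$

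\textbf{$(2)\Rightarrow(1)$.} By the displayed inclusion, condition $(2)$ here implies condition $(2)$ of \Cref{char-weakly Gor}. That theorem then yields that $R$ is left weakly Gorenstein.

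\textbf{$(1)\Rightarrow(2)$.} Assume $R$ is left weakly Gorenstein, so $\leftindex^{\perp_\infty}R=\Gproj(R)$ in $\mo(R)$. Then $(\leftindex^{\perp_\infty}R)^{\perp_\infty}=\Gproj(R)^{\perp_\infty}$. By \Cref{thick subcategory} (1) this equals $\Gproj(R)^{\perp_1}$. \Cref{fid contains} gives $\Thick(\proj(R)\cup{\mathcal I}^{<\infty}(R))\subseteq\Gproj(R)^{\perp_1}$, and combining these proves $(2)$.

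The only step that needs any care is the finiteness $\id_R(R^\dagger)<\infty$, and it is already established in the paper, so I expect no real obstacle.
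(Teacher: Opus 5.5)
Your proposal is correct and follows essentially the same route as the paper. For $(1)\Rightarrow(2)$ you combine \Cref{thick subcategory} and \Cref{fid contains}; for $(2)\Rightarrow(1)$ you use $R^\dagger\in{\mathcal I}^{<\infty}(R)$ (from $\id_S(\omega)<\infty$ and \Cref{change of ring-iso}) to reduce to condition (2) of \Cref{char-weakly Gor}.
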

\begin{proof}
    $(1)\Rightarrow (2)$. By \Cref{thick subcategory} and \Cref{fid contains}, $\Thick(\proj(R)\cup{\mathcal I}^{<\infty}(R))\subseteq \Gproj(R)^{\perp_\infty}$. The desired result now follows as $R$ is left weakly Gorenstein.

\vskip5pt

    $(2)\Rightarrow (1)$. Since $\dim(S)<\infty$, the canonical module $\omega$ of $S$ has finite injective dimension; see \Cref{def of canonical}. It follows from \Cref{change of ring-iso} that $R^\dagger$ has finite injective dimension over $R$. In particular, $R^\dagger \in {\mathcal I}^{<\infty}(R)$. Combining this with the statement of $(2)$, we have $\Thick(\proj(R)\cup \{R^\dagger\})\subseteq (\leftindex^{\perp_\infty} R)^{\perp_\infty}$. By \Cref{char-weakly Gor}, $R$ is left weakly Gorenstein.
\end{proof}
    

\begin{corollary}
 Keep the assumptions as in \Cref{char-weakly Gor}.  Then $R$ is left weakly Gorenstein if and only if $$\Gproj(R)^{\perp_\infty}=(\leftindex^{\perp_\infty} {R})^{\perp_\infty}.$$ 
\end{corollary}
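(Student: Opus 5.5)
The statement to prove is: $R$ is left weakly Gorenstein if and only if $\Gproj(R)^{\perp_\infty}=(\leftindex^{\perp_\infty} R)^{\perp_\infty}$. I will prove the two implications separately; both reduce to \Cref{char-weakly Gor} together with \Cref{infty setting}.

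For the forward direction, assume $R$ is left weakly Gorenstein. By definition $\Gproj(R)=\leftindex^{\perp_\infty}R$ in $\mo(R)$. Applying $(-)^{\perp_\infty}$ to both sides immediately gives $\Gproj(R)^{\perp_\infty}=(\leftindex^{\perp_\infty}R)^{\perp_\infty}$.

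For the converse, assume $\Gproj(R)^{\perp_\infty}=(\leftindex^{\perp_\infty}R)^{\perp_\infty}$. By \Cref{infty setting} (equivalently, \Cref{characterization} combined with \Cref{thick subcategory}), we have
$$\Thick(\proj(R)\cup\{R^\dagger\})\subseteq \Gproj(R)^{\perp_1}=\Gproj(R)^{\perp_\infty}.$$
By the hypothesis, the right-hand side equals $(\leftindex^{\perp_\infty}R)^{\perp_\infty}$. This is exactly condition (2) of \Cref{char-weakly Gor}, so that theorem shows $R$ is left weakly Gorenstein.

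Every step is a direct citation of earlier results, so I expect no real obstacle. The only point needing care is that the inclusion $\Thick(\proj(R)\cup\{R^\dagger\})\subseteq\Gproj(R)^{\perp_\infty}$ uses the standing hypotheses of \Cref{characterization}: $S$ is Cohen--Macaulay with canonical module $\omega$, and $R$ is maximal Cohen--Macaulay over $S$. These are exactly the assumptions carried over from \Cref{char-weakly Gor}, so the citation is valid.
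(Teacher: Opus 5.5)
Your proposal is correct and matches the paper's proof. The only difference is that for the converse the paper uses $X\in\Thick(\proj(R)\cup\{R^\dagger\})\subseteq\Gproj(R)^{\perp_\infty}$ to verify condition (4) of \Cref{char-weakly Gor}, whereas you apply the same inclusion from \Cref{infty setting} to the whole thick subcategory and verify condition (2); both work equally well.
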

\begin{proof}
    If $R$ is left weakly Gorenstein, then $\Gproj(R)= \leftindex^{\perp_\infty} R$. This yields that $\Gproj(R)^{\perp_\infty}=(\leftindex^{\perp_\infty} {R})^{\perp_\infty}$. Conversely, assume $\Gproj(R)^{\perp_\infty}=(\leftindex^{\perp_\infty} {R})^{\perp_\infty}$. Note that $X\in \Thick(\proj(R)\cup \{R^\dagger\})$. Combining this with \Cref{infty setting}, we get  $X\in \Gproj(R)^{\perp_\infty}=(\leftindex^{\perp_\infty} {R})^{\perp_\infty}$. By \Cref{char-weakly Gor}, $R$ is left weakly Gorenstein.
\end{proof}


\begin{chunk}\label{question of Chen}
Let $\Lambda$ be an Artin algebra such that $\Gproj(\Lambda) = \proj(\Lambda)$. 
In \cite[Problem C]{Chen-postdoc}, X.-W. Chen asked the following question: does it follow that $\GProj(\Lambda) = \Proj(\Lambda)$? 
\Cref{char-VG} will yield that this question is equivalent to asking whether $\Lambda$ is virtually Gorenstein.

\end{chunk}

\begin{proposition}\label{G-free}
    Let $S$ be a Cohen--Macaulay ring that admits a canonical module $\omega$. Assume that $R$ is a noetherian $S$-algebra and that $R$, viewed as an $S$-module, is maximal Cohen--Macaulay. Then $\GProj(R)=\Proj(R)$ if and only if $\Thick(\proj(R)\cup \{R^\dagger\})=\mo(R)$, where $(-)^\dagger=\Hom_S(-, \omega)$. 

\end{proposition}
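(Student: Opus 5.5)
The plan is to deduce both implications from \Cref{characterization}, which in $\mo(R)$ identifies $\Gproj(R)$ with $\leftindex^{\perp_1}\Thick(\proj(R)\cup\{R^\dagger\})$, and from its big-module analogue, the proposition following \Cref{big version-cotorsion}: $\GProj(R)=\leftindex^{\perp_1}\Thick^\Pi(\Proj(R)\cup\Inj(R))$. The finitely generated statement is essentially already available. The real content is comparing $\GProj(R)$ with $\Gproj(R)$, which is where we pass between $\Mod(R)$ and $\mo(R)$.

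\emph{($\Leftarrow$).} Assume $\Thick(\proj(R)\cup\{R^\dagger\})=\mo(R)$. By \Cref{characterization}, $\Gproj(R)=\leftindex^{\perp_1}\mo(R)=\proj(R)$. Let $G\in\GProj(R)$ and fix a totally acyclic complex $\mathbf P$ with $G=\Ima d_0$; every syzygy $G_n$ of $\mathbf P$ is again Gorenstein projective.
\begin{enumerate}
\item The derived adjunction from \Cref{change of ring-iso} holds for arbitrary modules, since $\RHom_S(R,\omega)\simeq R^\dagger$. It gives $\Ext^i_R(G,R^\dagger)\cong\Ext^i_S(G,\omega)$.
\item Show $\Ext^{>0}_R(G,\mathcal C)=0$ for every $\mathcal C\in\mo(R)$. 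Since $\mo(R)=\Thick(\proj(R)\cup\{R^\dagger\})$ and $\Ext^{>0}_R(G,\proj(R))=0$, the class of $\mathcal C$ with $\Ext^{>0}_R(G,\mathcal C)=0$ is thick. So it suffices to get $\Ext^{>0}_S(G,\omega)=0$. For this I would write $\Ext^i_S(G,\omega)\cong\Ext^{i+n}_S(G_n,\omega)$ using the $R$-projectives (which are $S$-MCM) together with $\Ext^{>0}_S(R,\omega)=0$. When $\dim S<\infty$, $\id_S\omega<\infty$ kills this for large $n$. In general I would first reduce to a filtered-colimit / countably generated situation.
\item Conclude that $G$ is projective. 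The sequence $0\to G\to P_{-1}\to G_{-1}\to0$ splits as soon as $\Ext^1_R(G_{-1},G)=0$. To get this from vanishing against finitely generated modules, I would write $G$ as a filtered colimit of finitely generated modules. Alternatively, dualize: use $\Tor$ against finitely generated right modules and Matlis-type duals $\Hom_S(-,E)$, showing that $G$ is flat and Gorenstein projective, hence projective, as $\mathbf P$ then splits.
\end{enumerate}

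\emph{($\Rightarrow$).} Assume $\GProj(R)=\Proj(R)$. Then $\Gproj(R)=\proj(R)$, so \Cref{characterization} gives $\leftindex^{\perp_1}\Thick(\proj(R)\cup\{R^\dagger\})=\proj(R)$.
\begin{enumerate}
\item Let $M\in\mo(R)$. By \cite[Theorem 1.2 (c)]{M} (used in the proof of \Cref{characterization}), there is an exact sequence $0\to K\to N\to M\to0$ with $K$ in the thick subcategory and $N\in\MCM(S)$. So it suffices to treat $N\in\MCM(S)$.
\item For such $N$, \Cref{monomorphism} gives a coresolution $0\to N\to(R^\dagger)^{m_0}\to(R^\dagger)^{m_1}\to\cdots$. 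Splicing it with a projective resolution of $N$ yields an acyclic complex built from $\proj(R)$ and $\add R^\dagger$.
\item I would then build, by repeatedly replacing the $R^\dagger$-terms via the sequence $0\to X\to P\to R^\dagger\to0$ of \Cref{cm case-approximation}, an acyclic complex of projectives whose syzygy is $N$ up to modules in $\Thick(\proj(R)\cup\{R^\dagger\})$. The aim is to exhibit some syzygy or cosyzygy of $N$, modulo the thick subcategory, as a (possibly big) Gorenstein projective module. The hypothesis $\GProj(R)=\Proj(R)$ then forces it to be projective, placing $N$ in the thick subcategory.
\item A cleaner alternative for this direction is the Beligiannis--Krause route. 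Under $\GProj(R)=\Proj(R)$ the cotorsion pair $(\GProj(R),\GProj(R)^{\perp_1})$ becomes $(\Proj(R),\Mod(R))$. One then shows that $\Thick(\proj(R)\cup\{R^\dagger\})$ consists of the finitely generated modules in $\GProj(R)^{\perp_\infty}\cap\mo(R)$, which is all of $\mo(R)$.
\end{enumerate}

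The main obstacle in both directions is the passage between finitely generated and arbitrary modules. In ($\Leftarrow$) it is step 3, concluding projectivity of a big Gorenstein projective from vanishing of $\Ext$ into finitely generated modules. In ($\Rightarrow$) it is the converse, recognising $\Thick(\proj(R)\cup\{R^\dagger\})\cap\mo(R)$ as $\GProj(R)^{\perp_\infty}\cap\mo(R)$. I would attack both with the $\Tor$ / Matlis-duality reformulation used in \Cref{torpairartin}, reducing big $\Ext$-vanishing to finitely generated $\Tor$-vanishing.
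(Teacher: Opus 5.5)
\textbf{The gap is in ($\Rightarrow$).} It is exactly the step you flag as the main obstacle, and nothing in your outline resolves it.

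Assume $\GProj(R)=\Proj(R)$. You must show that every $M\in\mo(R)$ lies in $\Thick(\proj(R)\cup\{R^\dagger\})$. Contrapositively, a finitely generated module outside this thick subcategory must produce a non-projective Gorenstein projective module, and in general this can only be a big one. The finitely generated consequence $\Gproj(R)=\proj(R)$, equivalently ${}^{\perp_1}\Thick(\proj(R)\cup\{R^\dagger\})=\proj(R)$ by \Cref{characterization}, cannot be expected to give this. For Artin algebras, that implication is precisely X.-W. Chen's question recorded in \Cref{question of Chen} and \Cref{char-VG}. So the argument must use big Gorenstein projectives essentially, i.e.\ manufacture a big totally acyclic complex.

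Your step~3 gives no mechanism for this. Splicing the coresolution of \Cref{monomorphism} with a projective resolution, and then ``replacing $R^\dagger$-terms'' via $0\to X\to P\to R^\dagger\to 0$, only produces complexes of finitely generated projectives. Nothing makes them totally acyclic, and ``modulo the thick subcategory'' has no meaning at the level of modules.

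Your ``cleaner alternative'' only relocates the problem. The inclusion $\GProj(R)^{\perp_\infty}\cap\mo(R)\subseteq\Thick(\proj(R)\cup\{R^\dagger\})$ says that every finitely generated module outside the thick subcategory is detected by some big Gorenstein projective, and that is the whole content of ($\Rightarrow$). The $\Tor$/Matlis reformulation changes how vanishing is tested; it does not produce a non-vanishing witness.

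The paper gets the witness non-constructively. Since $R^\dagger$ is a dualizing complex for $\langle R,R\rangle$ in the sense of Iyengar--Krause, $\K_{\rm tac}(\Proj(R))$ is compactly generated. Its compact objects are, up to direct summands, the Verdier quotient of $D^b(\mo(R))$ by the thick subcategory generated by $R$ and $R^\dagger$; this identification rests on a Neeman/Bousfield localization argument. A compactly generated category vanishes iff its compacts do, and Krause--Stevenson converts vanishing of that quotient into $\Thick(\proj(R)\cup\{R^\dagger\})=\mo(R)$. Both directions follow at once.

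\textbf{Your ($\Leftarrow$) is a genuinely more elementary route.} For Artin algebras, the case used in \Cref{char-VG}, it can be completed as follows.
\begin{enumerate}
\item The class $\GProj(\Lambda)^{\perp_\infty}\cap\mo(\Lambda)$ is thick. You need all Gorenstein projectives simultaneously, not a single $G$, to get closure under kernels of epimorphisms. It contains $\proj(\Lambda)\cup\inj(\Lambda)$, so under the hypothesis it equals $\mo(\Lambda)$.
\item Then $D\Tor_i^\Lambda(N,G)\cong\Ext^i_\Lambda(G,DN)=0$ for every finitely generated right module $N$ and every $i>0$.
\item Hence $G$ is flat, and so projective, because $\Lambda$ is perfect.
\end{enumerate}

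In the general setting, however, your step~3 is incomplete. $\Hom_S(N,E)$ is finitely generated only when $N$ has finite length over $S$. So you obtain $\Tor$-vanishing only against finite-length modules, which does not give flatness of a non-finitely generated $G$.

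Your step~2 is fine when $\dim S<\infty$. The proposed reduction for $\dim S=\infty$ is unsupported, since $\Ext_S(G,\omega)$ does not localize when $G$ is big.
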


\begin{proof}
    First, we recall that $R^\dagger$ is the dualizing complex of the pair $\langle R, R\rangle$ in the sense of  \cite[Definition 3.3.1]{IK2006}. Note that $\GProj(R)=\Proj(R)$ if and only if $\K_{\rm tac}(\Proj(R))=0$, where $\K_{\rm tac}(\Proj(R))$ is the full subcategory consisting of totally acyclic complexes of projective modules in the homotopy category of complexes of $R$-modules. By \cite[Theorems 5.3 (1) and 5.12]{IK2006}, $\K_{\rm tac}(\Proj(R))$ is compactly generated. Thus, $\K_{\rm tac}(\Proj(R))=0$ if and only if $\K_{\rm tac}^{\rm c}(\Proj(R))=0$,  where $\K_{\rm tac}^{\rm c}(\Proj(R))$ is the full subcategory of compact objects in $\K_{\rm tac}(\Proj(R))$. Combining \cite[Theorems 5.3 (2) and 5.12]{IK2006} with \cite[Theorem 1]{KS}, we conclude that $\K_{\rm tac}^{\rm c}(\Proj(R))=0$ if and only if $\Thick(\proj(R)\cup \{R^\dagger\})=\mo(R)$. This completes the proof. 
\end{proof}

\begin{chunk}\label{stable cat}
Let $\underline{\GProj}(\Lambda)$ denote the stable category of $\GProj(\Lambda)$ modulo projective modules. The objects of $\underline{\GProj}(\Lambda)$ are the same as those of $\GProj(\Lambda)$. For any $M, N \in \underline{\GProj}(\Lambda)$, the morphism space
 $$
 \Hom_{\underline{\GProj}(\Lambda)}(M,N)\colonequals\Hom_\Lambda(M,N)/{\mathcal P}(M,N),
 $$
 where $\mathcal{P}(M,N)$ consists of morphisms in $\Hom_\Lambda(M,N)$ which factor through a projective module. 
It is known that $\GProj(\Lambda)$ is a Frobenius category whose projective--injective objects are precisely the projective modules, and hence its stable category $\underline{\GProj}(\Lambda)$ is triangulated. We refer the reader to \cite{Happel} for further details on Frobenius categories and the triangulated structure of their stable categories.

\vskip5pt
When $\Lambda$ is an Artin algebra, A. Beligiannis \cite[Theorem~6.6]{Beligiannis-JA} observed that $\underline{\GProj}(\Lambda)$ is a compactly generated triangulated category; this result also holds for more general rings that admit dualizing complexes; see \cite[Theorems 5.3 and 5.12]{IK2006}.

\end{chunk}

\begin{theorem}\label{char-VG}
    Let $\Lambda$ be an Artin algebra and assume $\Gproj(\Lambda)=\proj(\Lambda)$. The following are equivalent. 
    \begin{enumerate}
        \item $\Lambda$ is virtually Gorenstein.

        \item $\GProj(\Lambda)=\Proj(\Lambda)$.

        \item $\Thick(\proj(\Lambda)\cup \inj(\Lambda))=\mo(\Lambda)$. 

        \item $
    (\leftindex^{\perp_1}\Thick(\proj(\Lambda)\cup \inj(\Lambda)), \Thick(\proj(\Lambda)\cup \inj(\Lambda)))
    $
    is a cotorsion pair.
    \end{enumerate} 
\end{theorem}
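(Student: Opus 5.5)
We prove the cycle $(2)\Leftrightarrow(3)$, $(3)\Leftrightarrow(4)$ and $(1)\Leftrightarrow(2)$. Throughout we use that for an Artin algebra, with $S$ the underlying commutative Artin ring and $\omega=J$, we have $R^\dagger=D(\Lambda_\Lambda)$. Hence $\Thick(\proj(\Lambda)\cup\{\Lambda^\dagger\})=\Thick(\proj(\Lambda)\cup\inj(\Lambda))$, because $\inj(\Lambda)=\add D(\Lambda_\Lambda)$ and thick subcategories are closed under summands.

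\textbf{$(2)\Leftrightarrow(3)$.} This is \Cref{G-free} applied to $R=\Lambda$, which is allowed by \Cref{Examples} (2).

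\textbf{$(3)\Leftrightarrow(4)$.} Write $\mathcal T=\Thick(\proj(\Lambda)\cup\inj(\Lambda))$. By \Cref{characterization}, ${}^{\perp_1}\mathcal T=\Gproj(\Lambda)$, and by hypothesis this equals $\proj(\Lambda)$. So (4) says that $(\proj(\Lambda),\mathcal T)$ is a cotorsion pair. Since $\proj(\Lambda)^{\perp_1}=\mo(\Lambda)$, this holds if and only if $\mathcal T=\mo(\Lambda)$. The other half of the cotorsion pair condition, ${}^{\perp_1}\mo(\Lambda)=\proj(\Lambda)$, is automatic. This settles the equivalence.

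\textbf{$(1)\Leftrightarrow(2)$.} Assume (2). Then $\GProj(\Lambda)^{\perp_\infty}=\Proj(\Lambda)^{\perp_\infty}=\Mod(\Lambda)$. Dually, we need $\GInj(\Lambda)=\Inj(\Lambda)$, since then ${}^{\perp_\infty}\GInj(\Lambda)=\Mod(\Lambda)$. To obtain this, apply (2)$\Leftrightarrow$(3) to $\Lambda^{\rm op}$. The duality $D$ carries $\Thick(\proj(\Lambda^{\rm op})\cup\inj(\Lambda^{\rm op}))$ to $\mathcal T$, so (3) is left--right symmetric. Moreover, the hypothesis $\Gproj(\Lambda)=\proj(\Lambda)$ is not needed in \Cref{G-free}. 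Thus (3) gives $\GProj(\Lambda^{\rm op})=\Proj(\Lambda^{\rm op})$. To pass from this to $\GInj(\Lambda)=\Inj(\Lambda)$, I would use the known fact for Artin algebras that $\GInj(\Lambda)=\Inj(\Lambda)$ if and only if $\GProj(\Lambda^{\rm op})=\Proj(\Lambda^{\rm op})$. This follows from the compact-generation results (Beligiannis; Iyengar--Krause): the compacts of $\underline{\GInj}(\Lambda)$ correspond under $D$ to $\underline{\Gproj}$ data of the opposite side. Failing that, one can argue via \cite[Theorem 1]{KS} applied to $\K_{\rm tac}(\Inj(\Lambda))$.

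Conversely, assume (1). By Beligiannis--Krause, virtually Gorenstein implies $\GProj(\Lambda)$ is the closure of $\Gproj(\Lambda)$ under filtered colimits; equivalently, $\underline{\GProj}(\Lambda)$ is compactly generated by $\underline{\Gproj}(\Lambda)$. Here $\underline{\Gproj}(\Lambda)=0$ because $\Gproj(\Lambda)=\proj(\Lambda)$. So $\underline{\GProj}(\Lambda)=0$, that is, $\GProj(\Lambda)=\Proj(\Lambda)$.

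\textbf{Main obstacle.} The hard step is quoting exactly the right external results. For (1)$\Rightarrow$(2) this is Beligiannis' theorem that $\Lambda$ is virtually Gorenstein if and only if the compacts of $\underline{\GProj}(\Lambda)$ are $\underline{\Gproj}(\Lambda)$, up to summands. For (2)$\Rightarrow$(1) it is the symmetry argument that produces $\GInj(\Lambda)=\Inj(\Lambda)$.
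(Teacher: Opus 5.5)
Your proof is correct. For $(2)\Leftrightarrow(3)$, $(3)\Leftrightarrow(4)$ and $(1)\Rightarrow(2)$ it follows the paper: \Cref{G-free}; \Cref{characterization}, which gives ${}^{\perp_1}\Thick(\proj(\Lambda)\cup\inj(\Lambda))=\Gproj(\Lambda)=\proj(\Lambda)$; and Beligiannis' criterion that $\Lambda$ is virtually Gorenstein iff $\underline{\GProj}(\Lambda)^c=\underline{\Gproj}(\Lambda)$, combined with compact generation of $\underline{\GProj}(\Lambda)$ and $\underline{\Gproj}(\Lambda)=0$.

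You diverge at $(2)\Rightarrow(1)$. The paper uses the other half of the same criterion, which you yourself quote as an equivalence. If $\GProj(\Lambda)=\Proj(\Lambda)$, then $\underline{\GProj}(\Lambda)=0$, so its compact objects are $0=\underline{\Gproj}(\Lambda)$, and $\Lambda$ is virtually Gorenstein. You instead verify the definition $\GProj(\Lambda)^{\perp_\infty}={}^{\perp_\infty}\GInj(\Lambda)$ directly, which forces you to prove $\GInj(\Lambda)=\Inj(\Lambda)$. Your chain (symmetry of (3) under $D$, then \Cref{G-free} for $\Lambda^{\rm op}$) is valid. It has the side benefit of showing that (2) for $\Lambda$ is equivalent to (2) for $\Lambda^{\rm op}$. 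But it costs an extra transfer step, $\GProj(\Lambda^{\rm op})=\Proj(\Lambda^{\rm op})\Rightarrow\GInj(\Lambda)=\Inj(\Lambda)$, which the paper never needs.

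You only gesture at that transfer step, and the justification you suggest is risky. A statement like ``the compact objects of $\overline{\GInj}(\Lambda)$ are $D(\underline{\Gproj}(\Lambda^{\rm op}))$'' is itself a virtually-Gorenstein-type condition, so quoting it here would be circular. The transfer is true, though, and has a direct proof.

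Let $I$ be a totally acyclic complex of injective $\Lambda$-modules. Then $D(I)$ is an acyclic complex of flat, hence projective, $\Lambda^{\rm op}$-modules. It is totally acyclic, for these reasons. First, $\Hom_{\Lambda^{\rm op}}(D(I),D(N))\cong D(D(I)\otimes_\Lambda N)$. Second, $D(I)\otimes_\Lambda E\cong D\Hom_\Lambda(E,I)$ for finitely generated injective $E$, and every injective $\Lambda$-module is a direct sum of such $E$. Third, every projective $\Lambda^{\rm op}$-module $Q$ is pure-injective, since $\Lambda_\Lambda$ is endofinite, so $Q$ is a direct summand of $DD(Q)$, where $D(Q)$ is injective.

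Hence the cycle modules $D(Z)$ of $D(I)$ are Gorenstein projective, so projective by hypothesis. Then $DD(Z)$ is injective, and $Z$, a pure submodule of $DD(Z)$ over a noetherian ring, is injective. With this filled in, or replaced by the one-line appeal to Beligiannis' criterion, your argument is complete.
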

\begin{proof}
   $(1)\iff (2)$. By \cite[Theorem 8.2]{Beligiannis-JA}, $\Lambda$ is virtually Gorenstein if and only if $\underline{\GProj}(\Lambda)^c=\underline{\Gproj}(\Lambda)$, where $\underline{\GProj}(\Lambda)^c$ is the full subcategory of compact objects in $\underline{\GProj}(\Lambda)$. Combining this with $\Gproj(\Lambda)=\proj(\Lambda)$, we get that $\Lambda$ is virtually Gorenstein if and only if $\underline{\GProj}(\Lambda)=0$. The latter is equivalent to $\GProj(\Lambda)=\Proj(\Lambda)$. 

   \vskip5pt
   
  $(2)\iff (3)$. This is just \Cref{G-free}; see \Cref{duality}.

  \vskip5pt

  $(3)\iff (4)$. By \Cref{characterization}, \Cref{duality} and assumption, there is $\leftindex^{\perp_1}\Thick(\proj(\Lambda)\cup \inj(\Lambda))=\proj(\Lambda)$. This completes the proof. 
\end{proof}

\section{Gorensteinness via cotorsion pairs}
 If $R$ is an Iwanaga--Gorenstein ring, then it is known that $(\Gproj(R), {\mathcal P}^{<\infty}(R))$ forms a cotorsion pair in $\mo (R)$; see \Cref{def-cotorsion}. The main result of this section shows that the converse holds for Cohen--Macaulay local rings; see \Cref{result3}. This corresponds to \Cref{T3} in the introduction. Before stating the result, we need some preparations.

\begin{lemma}\label{idpd}
    Let $R$ be a commutative noetherian ring.  Assume that there is a finitely generated $R$-module $X$ with finite injective dimension, and $X_\m\neq 0$ for each maximal ideal $\m$ of $R$. If $\Gproj(R)^{\perp_1}={\mathcal P}^{<\infty}(R)$, then $R$ is Gorenstein.
\end{lemma}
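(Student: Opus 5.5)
By \Cref{fid contains}, every finitely generated module of finite injective dimension lies in $\Gproj(R)^{\perp_1}$. In particular $X\in \Gproj(R)^{\perp_1}$, so the hypothesis gives $\mathrm{pd}_R(X)<\infty$. Thus $X$ is a finitely generated module of finite projective dimension and finite injective dimension, with $X_\m\neq 0$ for every maximal ideal $\m$.

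The plan is then to localize and invoke the classical result that a local ring with a nonzero finitely generated module of finite projective dimension and finite injective dimension is Gorenstein. This is Foxby's theorem; see, e.g., \cite[Exercise 3.1.25]{BH}, which rests on the proven Bass conjecture / new intersection theorem. Concretely, fix a maximal ideal $\m$. Localization preserves both finite projective dimension and finite injective dimension for finitely generated modules over a noetherian ring, and $X_\m\neq 0$ by assumption. Hence $R_\m$ is Gorenstein.

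It remains to pass from maximal ideals to all primes. Every prime $\p$ is contained in some maximal ideal $\m$, and $R_\p$ is a localization of the Gorenstein local ring $R_\m$. A localization of a Gorenstein local ring is Gorenstein \cite[Theorem 3.3.5]{BH}, so $R_\p$ is Gorenstein for every $\p$. By the definition in \Cref{def-Gor}, this means $R$ is Gorenstein.

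The only nontrivial input is Foxby's theorem at the local step, which is the main obstacle. If one prefers a self-contained route, one can argue via a maximal $R_\m$-sequence that is also $X_\m$-regular. First observe that $X_\m$ is maximal Cohen--Macaulay: finite injective dimension forces $\depth X_\m\le \depth R_\m$ with $\dim X_\m=\dim R_\m$ by Bass, and then Auslander--Buchsbaum forces $\mathrm{pd}=0$ once the depth is maximal, so $X_\m$ would be free. A nonzero free module of finite injective dimension gives $\id R_\m<\infty$. Establishing that $X_\m$ is maximal Cohen--Macaulay is exactly the delicate part, and the cleanest path is simply to cite the known theorem.
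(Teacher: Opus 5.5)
Your main argument is correct and is essentially the paper's proof. \Cref{fid contains} puts $X$ in $\Gproj(R)^{\perp_1}={\mathcal P}^{<\infty}(R)$. Foxby's theorem applied to $X_\m\neq 0$ then shows each $R_\m$ is Gorenstein, and you pass to an arbitrary prime via $R_\p\cong (R_\m)_{\p R_\m}$.

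You should drop the ``self-contained'' aside, because its key claim is false. A nonzero module of finite projective and finite injective dimension need not be maximal Cohen--Macaulay: take $X=k$ over a regular local ring of positive dimension. So that route cannot work as stated, but your main argument does not depend on it.
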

\begin{proof}
By \Cref{fid contains}, $X\in \Gproj(R)^{\perp_1}$. Combining this with $\Gproj(R)^{\perp_1}={\mathcal P}^{<\infty}(R)$, we get that $X\in {\mathcal P}^{<\infty}(R)$. Thus, $X$ is a finitely generated $R$-module with both finite projective dimension and finite injective dimension. For each maximal ideal $\m$ of $R$, $X_\m\neq 0$ by assumption. We conclude from this that $X_\m$ has finite projective dimension and finite injective dimension over $R_\m$. It follows from a result of Foxby \cite[Corollary 4.4]{Foxby} that $R_\m$ is Gorenstein. For each prime ideal $\p$ of $R$, $\p$ is contained in a maximal ideal $\m$ of $R$. Note that $\p R_\m$ is a prime ideal of $R_\m$ and $R_\p\cong (R_\m)_{(\p R_\m)}$. Thus, $R_\p$ is Gorenstein as $R_\m$ is. We conclude that $R$ is Gorenstein. This completes the proof.
 \end{proof}




\begin{corollary}\label{dualizingcase}
     Let $R$ be a  Cohen--Macaulay ring with a canonical module $\omega$ and $\dim(R)<\infty$. If $\Gproj(R)^{\perp_1}={\mathcal P}^{<\infty}(R)$, then $R$ is Gorenstein. 
\end{corollary}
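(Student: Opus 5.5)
The plan is to apply \Cref{idpd} with $X=\omega$, the canonical module of $R$. It remains to check the two hypotheses of that lemma: $\omega$ has finite injective dimension, and $\omega_\m\neq 0$ for every maximal ideal $\m$ of $R$.

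For the first hypothesis, recall from \Cref{def of canonical} that a canonical module over a Cohen--Macaulay ring of finite Krull dimension has finite injective dimension; see \cite[Theorem 3.1.17]{BH}. Since $\dim(R)<\infty$ by assumption, we get $\id_R(\omega)<\infty$. Of course $\omega$ is also finitely generated.

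For the second hypothesis, we use the definition of canonical module over a non-local Cohen--Macaulay ring. By \Cref{def of canonical}, $\omega_\m$ is a canonical module of the Cohen--Macaulay local ring $R_\m$ for every maximal ideal $\m$. In particular, by the local definition, $\Ext^{d}_{R_\m}(k(\m),\omega_\m)\cong k(\m)\neq 0$, where $d=\dim(R_\m)$ and $k(\m)=R_\m/\m R_\m$. Hence $\omega_\m\neq 0$.

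\Cref{idpd} then applies with $X=\omega$. Under the hypothesis $\Gproj(R)^{\perp_1}={\mathcal P}^{<\infty}(R)$, it gives that $R$ is Gorenstein. No step should pose a real obstacle. The only subtle point is the nonvanishing of $\omega$ at every maximal ideal, and this follows directly from the local characterization via $\Ext$ against the residue field.
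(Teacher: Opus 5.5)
Your proposal is correct and follows the paper's proof: both apply \Cref{idpd} with $X=\omega$, using $\dim(R)<\infty$ to get $\id_R(\omega)<\infty$ and the fact that each localization of $\omega$ is a canonical module of the local ring, hence nonzero. Your justification of $\omega_\m\neq 0$ via $\Ext^{d}_{R_\m}(k(\m),\omega_\m)\cong k(\m)$ simply spells out a step the paper leaves implicit.
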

\begin{proof}
    Since $\dim(R)<\infty$, there is $\id_R(\omega)<\infty$; see \Cref{def of canonical}. By definition, $\omega_\p$ is a canonical module over $R_\p$ for each prime ideal $\p$ of $R$, and hence $\omega_\p\neq 0$. By \Cref{idpd}, $R$ is Gorenstein. 
\end{proof}

\begin{corollary}\label{CMcase}
    Let $R$ be a Cohen--Macaulay local ring. If  $\Gproj(R)^{\perp_1}={\mathcal P}^{<\infty}(R)$, then $R$ is Gorenstein. 
\end{corollary}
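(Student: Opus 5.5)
The plan is to reduce to \Cref{idpd} by producing a finitely generated $R$-module of finite injective dimension that is nonzero at the unique maximal ideal. The difficulty is that a Cohen--Macaulay local ring $(R,\m,k)$ need not admit a canonical module, so \Cref{dualizingcase} cannot be applied directly. We therefore build a suitable module by hand, as in the proof of \Cref{extorthmcm}.

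Let $d=\dim(R)$ and choose a maximal $R$-regular sequence $x_1,\ldots,x_d\in\m$. Put
\[
N\colonequals \Hom_R(R/(x_1,\ldots,x_d),E_R(k)).
\]
Since $R/(x_1,\ldots,x_d)$ has finite length, $N$ has finite length (Matlis duality), so $N$ is finitely generated. It is nonzero, because $R/(x_1,\ldots,x_d)\neq 0$ and Matlis duality is faithful on finite length modules.

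Next we check that $\id_R(N)<\infty$. Let $K$ be the Koszul complex on $x_1,\ldots,x_d$. Since the sequence is $R$-regular, $K$ is a finite free resolution of $R/(x_1,\ldots,x_d)$. Applying the exact functor $\Hom_R(-,E_R(k))$ to $K$ gives a bounded complex of injective modules $\Hom_R(R^{\binom{d}{i}},E_R(k))$ resolving $N$. This is exactly the argument in \Cref{extorthmcm} with $S=R$.

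Since $R$ is local, the only maximal ideal is $\m$ and $N_\m=N\neq 0$. So $N$ satisfies the hypotheses of \Cref{idpd}, and the assumption $\Gproj(R)^{\perp_1}={\mathcal P}^{<\infty}(R)$ yields that $R$ is Gorenstein.

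Internally, \Cref{idpd} works as follows: \Cref{fid contains} puts $N$ into $\Gproj(R)^{\perp_1}={\mathcal P}^{<\infty}(R)$, so $N$ is a nonzero finitely generated module with finite projective and finite injective dimension, and Foxby's theorem then forces $R$ to be Gorenstein.

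The only step needing care is the finite injective dimension of $N$, and the Koszul complex argument settles it. Alternatively, one could pass to the completion $\widehat R$, which has a canonical module, but transferring the hypothesis $\Gproj(R)^{\perp_1}={\mathcal P}^{<\infty}(R)$ to $\widehat R$ would be harder. The direct construction avoids this.
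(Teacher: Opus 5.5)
Your proposal is correct and follows the paper's route: reduce to \Cref{idpd} by exhibiting a nonzero finitely generated $R$-module of finite injective dimension. The only difference is that the paper cites Bruns--Herzog for the existence of such a module, whereas you construct it explicitly as $\Hom_R(R/(x_1,\ldots,x_d),E_R(k))$, using the Koszul complex argument already given in \Cref{extorthmcm}.
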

\begin{proof}
Any Cohen--Macaulay local ring admits a nonzero finitely generated module of finite injective dimension; see the discussion following \cite[Corollary 9.6.2]{BH}. The desired result now follows from \Cref{idpd}.
\end{proof}

\begin{theorem}\label{result3}
    Let $R$ be a Cohen--Macaulay local ring. The following are equivalent. 
    \begin{enumerate}
        \item $R$ is Gorenstein.

        \item $\Gproj(R)^{\perp_1}={\mathcal P}^{<\infty}(R)$.

        \item $(\Gproj(R), {\mathcal P}^{<\infty}(R))$ is a cotorsion pair.
    \end{enumerate}
\end{theorem}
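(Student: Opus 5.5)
The plan is to prove the cycle $(1)\Rightarrow(3)\Rightarrow(2)\Rightarrow(1)$.

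For $(1)\Rightarrow(3)$: a Gorenstein local ring $R$ has finite Krull dimension, so by \Cref{def-Gor} it is Iwanaga--Gorenstein. Then \Cref{def-cotorsion} (citing \cite[Theorem 8.3]{H}) gives that $(\Gproj(R),{\mathcal P}^{<\infty}(R))$ is a (hereditary, complete) cotorsion pair in $\mo(R)$.

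For $(3)\Rightarrow(2)$: this is immediate from the definition of a cotorsion pair, since $(\mathcal X,\mathcal Y)$ being a cotorsion pair means in particular $\mathcal X^{\perp_1}=\mathcal Y$. Here that reads $\Gproj(R)^{\perp_1}={\mathcal P}^{<\infty}(R)$.

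For $(2)\Rightarrow(1)$: this is exactly \Cref{CMcase}. Recall how it goes. A Cohen--Macaulay local ring has a nonzero finitely generated module $X$ of finite injective dimension. By \Cref{fid contains}, $X$ lies in $\Gproj(R)^{\perp_1}$, which equals ${\mathcal P}^{<\infty}(R)$ by (2). So $X$ has both finite projective and finite injective dimension, and Foxby's theorem (via \Cref{idpd}) forces $R$ to be Gorenstein.

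None of these steps is really an obstacle, since all the substantial input has already been established. The only point needing care is the passage from ``Gorenstein local'' to ``Iwanaga--Gorenstein'' in $(1)\Rightarrow(3)$, which uses $\dim R<\infty$ for local rings as recorded in \Cref{def-Gor}.
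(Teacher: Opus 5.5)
Your proposal is correct and follows the paper's proof exactly: $(1)\Rightarrow(3)$ via finite injective dimension of $R$ and the classical cotorsion pair for Iwanaga--Gorenstein rings, $(3)\Rightarrow(2)$ by definition, and $(2)\Rightarrow(1)$ by \Cref{CMcase}. Your recap of \Cref{CMcase} also matches the paper: a nonzero module of finite injective dimension, then \Cref{fid contains}, then Foxby's theorem through \Cref{idpd}.
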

\begin{proof}  
$(1)\Rightarrow (3)$. Since $R$ is Gorenstein and $\dim(R)<\infty$, we have $\id_R(R)<\infty$. Then  $(\Gproj(R),{\mathcal P}^{<\infty}(R))$ is a cotorsion pair; see \Cref{def-cotorsion}.

\vskip5pt

The implication $(3)\Rightarrow (2)$ is trivial, and the implication $(2)\Rightarrow (1)$  follows from \Cref{CMcase}.
\end{proof}

\bibliographystyle{amsplain}
\bibliography{ref}
\end{document}